\newcommand{\sym}[1]{\big ( #1 \big )^{\mathrm{sym}}}
\newcommand{\veceta}{\bm{\eta}}
\renewcommand{\epsilon}{\varepsilon}
\newcommand{\vecS}{\bm{S}}
\newcommand{\I}{\mathbb{I}}
\theoremstyle{plain}
\newtheorem{thm}{Theorem}[section]
\newtheorem{lem}{Lemma}[section]
\newtheorem{remark}{Remark}[section]
\newtheorem{ex}{Example}[section]
\numberwithin{equation}{section}
\newcommand{\bnu}{\bm{\nu}}
\newcommand{\eps}{\varepsilon}
\newcommand{\So}{\bm{S}}
\newcommand{\vp}{\varphi}
\renewcommand{\P}{\mathcal{P}}
\newcommand{\C}{\mathcal{C}}
\newcommand{\R}{\mathbb{R}}
\newcommand{\E}{\mathcal{E}}
\newcommand{\K}{\mathcal{K}}
\newcommand{\N}{\mathbb{N}}
\newcommand{\F}{\mathcal{F}}
\newcommand{\J}{\mathcal{J}}
\newcommand{\G}{\mathcal{G}}
\newcommand{\V}{\mathcal{V}}
\newcommand{\D}{\mathrm{D}}
\newcommand{\BV}{\mathrm{BV}}
\newcommand{\TV}{\mathrm{TV}}
\newcommand{\dz}{\, \mathrm{d}z}
\newcommand{\dr}{\, \mathrm{d}r}
\newcommand{\dt}{\, \mathrm{d}t}
\newcommand{\dbz}{\, \mathrm{d}\bm{z}}
\newcommand{\bA}{{\bf{A}}}
\newcommand{\BB}{{\bf{B}}}
\newcommand{\GG}{{\bf{G}}}
\newcommand{\Gl}{\GG_l}
\renewcommand{\div}{\mathrm{div}}
\newcommand{\dx}{\, \mathrm{dx}}
\newcommand{\dH}{\, \mathrm{d}\mathcal{H}^{d-1}}
\newcommand{\Haus}{\mathcal{H}^{d-1}}
\newcommand{\p}{\bm{p}}
\newcommand{\rr}{\bm{r}}
\newcommand{\x}{\bm{x}}
\newcommand{\sss}{\bm{s}}
\newcommand{\q}{\bm{q}}
\newcommand{\be}{\bm{\eta}}
\newcommand{\bxi}{\bm{\xi}}
\renewcommand{\u}{\bm{u}}
\newcommand{\w}{{\bm{w}}}
\newcommand{\0}{\bm{0}}
\newcommand{\f}{\bm{f}}
\newcommand{\g}{\bm{g}}
\newcommand{\bv}{\bm{v}}
\newcommand{\opt}{\overline{\vp}}
\newcommand{\uopt}{\overline{\u}}
\newcommand{\hh}{\an{\mathbbm{h}}}
\newcommand{\pd}{\partial}
\newcommand{\inn}[1]{\langle #1 \rangle}
\newcommand{\no}[1]{\| #1 \|}
\newcommand{\tr}{\mathrm{tr}}
\newcommand{\HD}{H^1_D(\Omega, \R^d)}
\definecolor{rosso}{rgb}{0.85,0,0}
\def\an #1{{\color{rosso}#1}}
\def\an #1{{#1}}
\title{Overhang penalization in additive manufacturing via phase field structural topology optimization with anisotropic energies}
\author{Harald Garcke \footnotemark[1] \and Kei Fong Lam \footnotemark[2] \and Robert N\"urnberg \footnotemark[3] \and Andrea Signori \footnotemark[4]}
\date{ }
\begin{document}
\maketitle

\renewcommand{\thefootnote}{\fnsymbol{footnote}}
\footnotetext[1]{Fakult{\"a}t f\"ur Mathematik, Universit{\"a}t Regensburg, 93040 Regensburg, Germany
({\tt Harald.Garcke@mathematik.uni-regensburg.de}).}
\footnotetext[2]{Department of Mathematics, Hong Kong Baptist University, Kowloon Tong, Hong Kong ({\tt akflam@math.hkbu.edu.hk}).}
\footnotetext[3]{Department of Mathematics, University of Trento, Trento, Italy
({\tt robert.nurnberg@unitn.it}).}
\footnotetext[4]{Dipartimento di Matematica ``F. Casorati'',  Universit\`a di Pavia, 27100 Pavia, Italy ({\tt andrea.signori01@unipv.it}).}

\begin{abstract} 
A phase field approach for structural topology optimization with application to additive manufacturing is analyzed. The main novelty is the penalization of {\it overhangs} (regions of the design that require underlying support structures during construction) with anisotropic energy functionals. Convex and non-convex examples are provided, with the latter showcasing oscillatory behavior along the object boundary termed \an{the} {\it dripping effect} in the literature. We provide a rigorous mathematical analysis for the structural topology optimization problem with convex and non-continuously-differentiable anisotropies, deriving the first order necessary optimality condition using subdifferential calculus. Via formally matched asymptotic expansions we connect our approach with previous works in the literature based on a sharp interface shape optimization description. Finally, we present several numerical results to demonstrate the advantages of our proposed approach in penalizing overhang developments.

\end{abstract}

\noindent {\bf \an{Keywords}:}
Topology optimization, phase field, anisotropy, linear elasticity, optimal control, additive manufacturing, overhang penalization

\vskip3mm
\noindent {\bf AMS (MOS) Subject Classification:} {
		49J20, 
		49K40, 
		49J50  
		}

\section{Introduction}
Additive manufacturing (AM) is an innovative building technique that produces objects in a layer-by-layer fashion through fusing or binding raw materials in powder and resin forms. Since its introduction in the 1970s, it has shown great versatility in allowing for the creation of highly complex geometries, immediate modifications and redesign, thus making it an ideal process for rapid prototyping and testing. But despite such advantages over traditional manufacturing technologies, AM still has not seen widespread integration in serial production, and there are still many limitations that have yet to be \an{overcome}. For an overview of the technologies involved and the challenges encountered by AM, we refer the reader to the review article \cite{AAAM} as well as the report \cite{Del}.

In this work, we focus on a recurring issue encountered when practitioners employ AM to construct objects. Some categories of AM technologies, such as Fused Deposition Modeling and Laser Metal Deposition, construct objects in an upwards direction (hereafter referred to as the build direction) by repeatedly depositing and then fusing a new layer of raw materials on the surface of the object. {\it Overhangs} are regions of the constructed object that when placed in a certain orientation extend outwards without any underlying support. For example, the top horizontal bar of a T-shaped structure placed vertically will be classified as an overhang. The main issue with overhangs is that they can deform under their own weight or by thermal residual stresses from the construction process, and if not supported from below, there is a risk that the object itself can display unintended deformations and the entire printing process can fail.

One natural solution is to identify certain orientations of the object whose overhang regions are minimized prior to printing, and choose the build direction to be one of these orientations. Going back to the T-shape structure example, we can simply rotate the shape by $180^{\circ}$ so that there are no overhang regions \an{(see, e.g., \cite[Fig.~5]{Jiang})}. However, in general, there is no guarantee that orientations with no overhang regions exist. In this direction we mention the works \cite{Morgan,Zhang} that incorporate various geometric information such as contact surface area and support volume expressed as functions of the build direction within an optimization procedure.  On the other hand, simultaneous optimization of build direction and topology has been considered in \cite{WangQ}.

Another remedy is to employ {\it support structures} that are built concurrently with the object whose purpose is to reduce the potential deformation of overhang regions through mechanical loads or thermal residual stresses. These supplementary structures act like scaffolding to overhangs, and after a successful print are then removed in a post-processing step. Along with increased material costs and printing time, there is a risk of damaging delicate features of the objects during the removal step. Nevertheless, for certain AM technologies such as the aforementioned Fused Deposition Modeling, some form of support structures will always be necessary in order to mitigate the potential undesirable deformations of the finished object.  Thus, recent mathematical research \an{has} concentrated on the optimizing support structures in an effort to reduce material waste and minimize contact area with the object. Many such works explore optimal support in the framework of shape and topology optimization \cite{All,Gardan,Kuo,Lang,Mirz}, as well as proposing sparse cellular designs that have low solid volume and contact area in the form of lattices \cite{Hussein}, honeycomb \cite{Lu}, and trees \cite{Vanek}.  Further details can be found in the review article \cite{Jiang}. 

A related approach would be to allow some modifications to the object, as long as the altered design retains the intended functionality of the original, leading to creation of {\it self-supporting} objects that do not require support structures at all \cite{Cacace,Leary13,Leary,Liu}. Our present work falls roughly into this category, where we employ a well-known phase field methodology in structural topology optimization with the aim of identifying optimal designs fulfilling the so-called {\it overhang angle constraint}.  In mathematical terms, let us consider an object $\Omega_1 \subset \R^{d}$, $d=2,3,$ being built within a hold-all rectangular domain $\Omega = [0,1]^d$ in a layer-by-layer fashion with build direction $\bm{e}_{d} = (0,\dots, 1)^{\top}$. The {\it base plate} is the region $\mathcal{B} := [0,1]^{d-1}\times \{0\}$ where $\Omega_1$ is assembled on. For any $\p \in \pd \Omega_1 \setminus \mathcal{B}$, the overhang angle $\alpha$ is defined as the angle from the base plate to the tangent line at $\p$ (denoted as $\alpha_1$ and $\alpha_2$ in Figure \ref{fig:overhang}). 
\begin{figure}[h]
\centering
\includegraphics[width=0.6\textwidth]{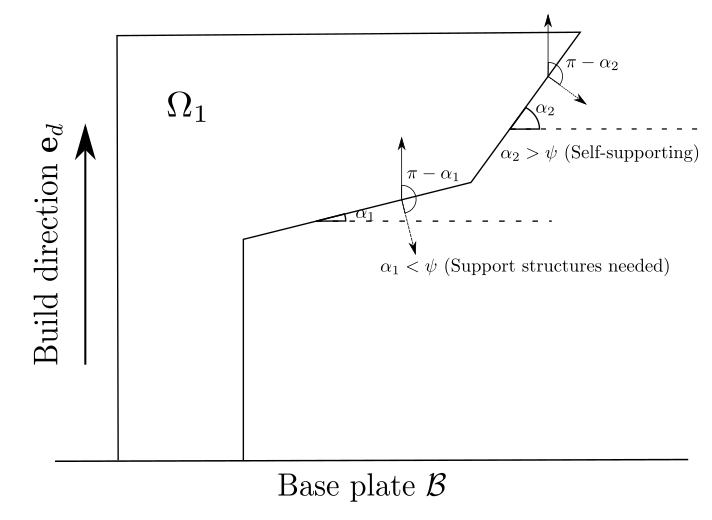}
\caption{Definition of overhang angle measured from the base plate to the tangent line on the boundary.}
\label{fig:overhang}
\end{figure}
Then, there exists a critical threshold angle $\psi$ where the portion of $\Omega_1$ is self-supporting (resp.~requires support structures) if the overhang angle there is greater (resp.~smaller) than $\psi$. Conventional wisdom from practitioners puts the critical angle $\psi$ at $45^{\circ}$ \cite{Dapogny,Lang,Thomas} with the reasoning that every layer then has approximately 50\% contact with the previous layer and thus it is well-supported from below, although there are some authors that propose smaller values for $\psi$ (e.g., $40^{\circ}$ in \cite{Leary13,Leary}). Exact values of this critical angle also depend on the setting of the 3D printer as well as physical properties of the raw materials used. We mention that other works \cite{Mirz} consider an alternate definition, which is given as the angle from the vertical build direction to the outward unit normal of $\pd \Omega_1$. Denoting this as $\beta$, a simple calculation shows that $\beta = 180^{\circ} - \alpha$. Then, regions where $\beta > 180^{\circ} - \psi$ (typically $135^{\circ}$ if $\psi = 45^{\circ}$) will require support structures.

In this work we adopt the latter definition, but choose to define the overhang angle as the angle measured from the negative build direction $-\bm{e}_d$ to the outer unit normal, see Figure \ref{fig:overhang_def} and also \cite{AllaireDEFM17,Cacace}. Then, given a critical angle threshold $\psi$ (expressed now in radians), the overhang angle constraint in the current context of shape and topology optimization would demand that an optimal geometry for the object $\Omega_1$ has minimal regions where the overhang angles there do not lie in the interval $[\psi, 2\pi - \psi]$ (i.e., $\Omega_1$ should be self-supporting as much as possible), in addition to other mechanical considerations such as minimal compliance.

\begin{figure}[h]
\centering
\includegraphics[width=0.8\textwidth]{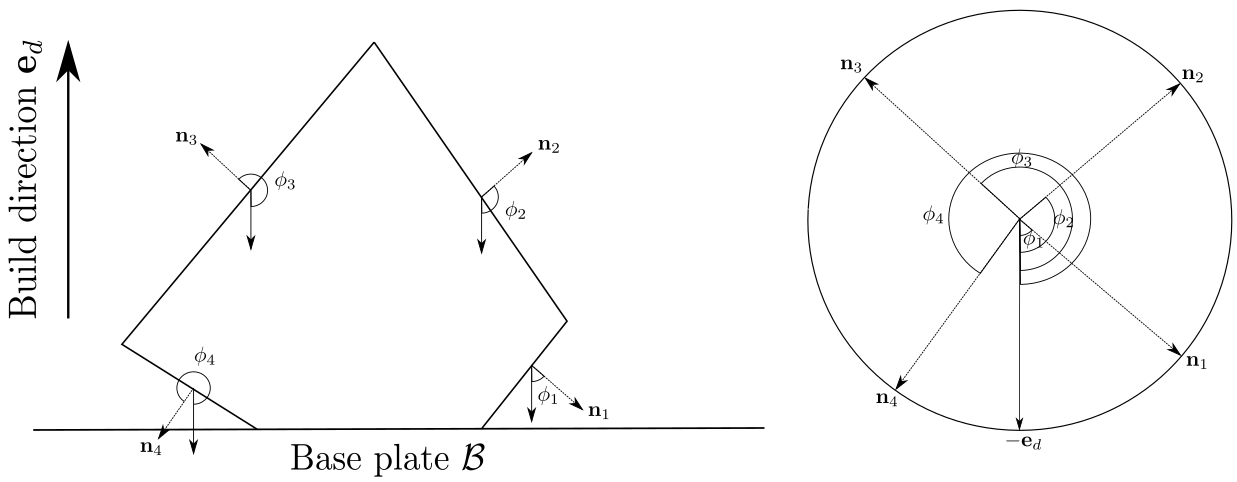}
\caption{The definition of overhang angle used in this work. (Left) A structure with four angles measured from the negative build direction $-\bm{e}_d$ to the outward unit normal on the boundary. (Right) Visualization of the angles on the unit circle, where the angle $\phi_i$ is associated to unit normal $\bm{n}_i$.}
\label{fig:overhang_def}
\end{figure}

For structural topology optimization we employ the phase field methodology proposed in \cite{Bourdin}, later popularized by many authors to other applications such as multi-material structural topology optimization \cite{BGFS,WangZ04}, compliance optimization \cite{BlankGSSSV12,Take}, topology optimization with local stress constraints \cite{Burger}, nonlinear elasticity \cite{Penzler}, elastoplasticity \cite{Almi}, eigenfrequency maximization \cite{GHK,Take}, graded-material design \cite{Carr}, shape optimization in fluid flow \cite{GHechtNS,GHechtStokes,GHHKL}, as well as resolution strategies for some inverse identification problems \cite{Beretta,LY}. The key idea is to cast the structural topology optimization problem as a constrained minimization problem for a phase field variable $\vp$, where the geometry of an optimal design $\Omega_1$ for the object can be realized as a certain level-set of $\vp$. In the language of optimal control theory, $\vp$ acts as a control and influences a response function, e.g., the elastic displacement $\u$ of the object, that is the solution to a system of partial differential equations (see \eqref{state} below). With an appropriate objective functional, for instance, a weighted sum of the mean compliance and a phase field formulation of the overhang constraint (see \eqref{obj} below), we analyze the PDE-constrained minimization problem for an optimal design variable $\vp$. 

In the above references, previous authors employ an isotropic Ginzburg--Landau functional that has the effect of perimeter penalization (see Section \ref{sec:aniso} for more details), and in the current context this means no particular directions are preferred/discouraged in the optimization process. In this work we follow the ideas first proposed in \cite{AllaireDEFM17,Dapogny} that use an anisotropic perimeter functional as a geometric constraint for overhangs (see also similar ideas in \cite{All_sppt} for support structures), and introduce a suitable anisotropic Ginzburg--Landau functional for the phase field optimization problem to enforce the overhang angle constraint. In one sense, our first novelty is that we generalize earlier works in phase field structural topology optimization by considering anisotropic energies. The precise formulation and details of the problem are given in the next sections. For more details regarding the use of anisotropy in phase field models we refer the reader to \cite{eck,vch,GNS,Taylor}.

Our second novelty is the analysis of the anisotropic phase field optimization problem. We establish the existence of minimizers (i.e., optimal designs) and derive first-order necessary optimality conditions. It turns out that our proposed approach to the overhang angle constraint requires an anisotropy function $\gamma$ that is not continuously differentiable. In turn, the derivation of the associated necessary optimality conditions becomes non-standard. We overcome this difficulty with subdifferential calculus, and provide a characterization result for the subdifferential of convex functionals whose arguments are weak gradients. Then, we perform a formally matched asymptotic analysis for the optimization problem with a differentiable $\gamma$ to infer the corresponding sharp interface limit.

Lastly, let us provide a non-exhaustive overview on related works that integrate the overhang angle constraint within a topology optimization procedure. Our closest counterpart is the work \cite{AllaireDEFM17} which employs an anisotropic perimeter functional in a shape optimization framework implemented numerically with the level-set method. The authors observed that oscillations along the object boundary would develop even though the design complied with the angle constraint. This so-called {\it dripping effect} is attributed to instability effects of the anisotropic perimeter functional, brought about by the non-convexity of the anisotropy used in \cite{AllaireDEFM17}, see also Example \ref{eg:Frank2} and Figure \ref{fig:2dwigglebig32pi_newnc1_ani} below and the discussion in \cite[Sec. 7.3]{Gurtin}. An alternative mechanical constraint based on modeling the layer-by-layer construction process is then proposed to provide a different treatment of the overhangs and seems to suppress the dripping effect (see also \cite{Amir} for similar ideas), but is computationally much more demanding. These boundary oscillations have also been observed earlier in \cite{Qian}, which used a Heaviside projection based integral to encode the overhang angle constraint within a density based topology optimization framework, and can be suppressed by means of an adaptive anisotropic filter introduced in \cite{Mezz}.  Similar projection techniques for density filtering are used in \cite{Gara}, which was combined with an edge detection algorithm to evaluate feasible and non-feasible contours during optimization iterations. In \cite{Gaynor}, control of overhang angles is achieved by means of filtering with a wedge-shaped support, while in \cite{Zhao} an explicit quadratic function with respect to the design density is used to formulate the self-support constraint. A different approach was proposed in \cite{Lang0,Lang1} using a spatial filtering which checks element densities row-by-row and remove all elements not supported by the previous row. This idea is then extended in \cite{Lang,Pellens,Thore} to construct new filtering schemes that include other relevant manufacturing constraints. Another method was proposed in \cite{Guo} using the frameworks of moving morphable components and moving morphable voids to provide a more explicit and geometric treatment of the problem, and is capable of simultaneously optimizing structural topology and build orientation. Finally, let us mention \cite{Ven1,Ven2}, where a filter is developed based on front propagation in unstructured meshes that has a flexibility in enforcing arbitrary overhang angles. Focusing only on overhang angles in enclosed voids, \cite{Luo} combined a nonlinear virtual temperature method for the identification of enclosed voids with a logarithmic-type function to constraint the area of overhang regions to zero.

The rest of this paper is organized as follows: in Section \ref{sec:prob} we formulate the phase field structural optimization problem to be studied, and in Section \ref{sec:aniso} we describe our main idea of introducing anisotropy, along \an{with} some examples and relevant choices, as well as a useful characterization of the subdifferential of the anisotropic functional. The analysis of the structural optimization problem is carried out in Section \ref{sec:ana}, where we establish analytical results concerning minimizers and optimality conditions. The connection between our work and that of \cite{AllaireDEFM17} is explored in Section \ref{sec:SI} where we look into the sharp interface limit, and, finally, in Section \ref{sec:num} we present the numerical discretization and several simulations of our approach.

\medskip

\section{Problem formulation}\label{sec:prob}
In a bounded domain $\Omega \subset \R^d$ with Lipschitz boundary $\Gamma :=\pd \Omega$ that exhibits a decomposition $\Gamma = \Gamma_D \, \cup\,  \Gamma_g \, \cup \, \Gamma_0$, we consider a linear elastic material that does not fully occupy $\Omega$. We describe the material location with the help of a phase field variable $\vp: \Omega \to [-1,1]$. In the phase field methodology, we use the level set $\{\vp = -1\} = \{\an{\x \in \Omega : \vp(\x) =- 1}\}$ to denote the region occupied by the elastic material, and $\{\vp = 1\}$ to denote the complementary void region.  These two regions are separated by a diffuse interface layer $\{|\vp|<1\}$ whose thickness is proportional to a small parameter $\eps > 0$. Since $\vp$ describes the material distribution within $\Omega$, complete knowledge of $\vp$ allows us to determine the shape and topology of the elastic material.

\paragraph{Notation.} For a Banach space $X$, we denote its topological dual by $X^*$ and the corresponding duality pairing by $\inn{\cdot,\cdot}_X$. For any $p \in [1,\infty]$ and $k >0$, the standard Lebesgue and Sobolev spaces over $\Omega$ are denoted by $L^p := L^p(\Omega)$ and $W^{k,p} := W^{k,p}(\Omega)$ with the corresponding norms 
$\no{\cdot}_{L^p}$ and $\no{\cdot}_{W^{k,p}}$.
In the special case $p = 2$, these become Hilbert spaces and we employ the notation $H^k := H^k(\Omega) = W^{k,2}(\Omega)$ with the corresponding norm $\no{\cdot}_{H^k}$. For convenience, the norm and inner product of $L^2(\Omega)$
are simply denoted by $\no{\cdot}$ and $(\cdot,\cdot)$, respectively. For our subsequent analysis, we introduce the space
\begin{align*}
	\HD := \{ \bv \in H^1(\Omega, \R^d) \, : \, \bv = \0 \text{ on } \Gamma_D \}.
\end{align*}

Let us remark that, despite we employ bold symbols to denote vectors, matrices, and vector- or matrix-valued functions, we do not introduce a special notation to indicate the corresponding Lebesgue and Sobolev spaces. Thus, when those terms occur in the estimates, the corresponding norm is to be intended in its natural setting.

\subsection{State system}
To obtain a mathematical formulation that can be further analyzed, we employ the ersatz material approach, see \cite{AllaireErsatz}, and model the complementary void region as a very soft elastic material.  This allows us to consider a notion of elastic displacement on the entirety of $\Omega$, leading to the displacement vector $\u: \Omega \to \R^d$ and the associated linearized strain tensor
\[
\E(\u) := \tfrac{1}{2}(\nabla \u + (\nabla \u)^{\top}).
\]
Let $\C_0$ and $\C_1$ be the fourth order elasticity tensors corresponding to the ersatz soft material and the linear elastic material, respectively, that satisfy the standard symmetric conditions 
\[
\C_{ijkl} = \C_{jikl} = \C_{ijlk} \quad \text{ for } i,j,k,l \in \{1, \dots, d\},
\]
and assume there exist positive constants $\theta$ and $\Lambda$ such that for any non-zero symmetric matrices $\bA$, $\BB \in \R^{d \times d}$:
\[
\theta |\bA|^2 \leq \C \bA : \bA \leq \Lambda |\bA|^2,
\]
where $\bA : \BB = \sum_{i,j=1}^d \bA_{ij} \BB_{ij}$ denotes the Frobenius inner product between two matrices.  With the help of the phase field variable $\vp$, we define an interpolation fourth order elasticity tensor $\C(\vp)$ as
\[
\C(\vp) = \tfrac{1}{2}g(\vp)( \C_0 - \C_1) + \tfrac{1}{2}(\C_0 + \C_1)  \quad \text{ for } \vp \in [-1,1],
\]
where $g:\R \to [-1,1]$ is a monotone function satisfying $g(-1) = -1$ and $g(1) = 1$. Then, it is clear that in the region $\{\vp = -1\}$ (resp.~$\{\vp = 1\}$) we obtain the elasticity tensor $\C_1$ (resp.~$\C_0$) by substituting $\vp = -1$ (resp.~$\vp = 1$) into the definition of $\C(\vp)$. As an example, we can consider 
\[
g(\vp) = \vp, \quad \C_0 = \eps^2 \C_1,
\]
where $0 < \eps \ll 1$ is a constant and for any symmetric matrix $\bA \in \R^{d \times d}$,
\begin{align}\label{Lame}
\C_1 \bA = 2 \mu_1 \bA  + \lambda_1 \tr(\bA) {\bf I}
\end{align}
with identity matrix ${\bf I}$ and Lam\'e constants $\lambda_1$ and $\mu_1$ to obtain a linear interpolating elasticity tensor. Another example for $\C(\vp)$ uses a quadratic interpolation function
\begin{align}\label{eq:gphic}
g(\vp) =1 - \frac{1}{2}(1-\vp)^2.
\end{align}
Let $\f: \Omega \to \R^d$ denote a body force and $\g : \Gamma_g \to \R^d$ denote a surface traction force.  On $\Gamma_D \subset \Gamma$ we assign a zero Dirichlet boundary condition for the displacement $\u$ and on $\Gamma_0$ we assign a traction-free boundary condition. This leads to the following elasticity  system for the displacement $\u$, representing the {\it state system} of the problem:
\begin{subequations}\label{state}
\begin{alignat}{2}
- \div (\C(\vp) \E(\u)) & = \hh(\vp) \f && \quad \text{ in } \Omega, \\
\u &= \0 && \quad \text{ on } \Gamma_D, \\
(\C(\vp) \E(\u)) \bm{n} &= \g && \quad \text{ on } \Gamma_g, \\
(\C(\vp) \E(\u)) \bm{n} &= \0 && \quad \text{ on } \Gamma_0,
\end{alignat}
\end{subequations}
where $\bm{n}$ denotes the outward unit normal to $\Gamma = \Gamma_D \cup \Gamma_g \cup \Gamma_0$, and $\hh: [-1,1] \to [0,1]$ is defined as $\hh(\vp) = \frac{1}{2}(1-\vp)$ is a function introduced so that the body force $\f$ only acts on the elastic material $\{\vp = -1\}$ (where $\hh(\vp) = 1$) and not on the ersatz material $\{\vp = 1\}$ (where $\hh(\vp) = 0$). We extend $\hh(\vp)$ as $1$ if $\vp<-1$ and as $\an{0}$ if $\vp>1$.

The well-posedness of \eqref{state} is a simple consequence of \cite[Thms.~3.1, 3.2]{BGFS}, which we summarize as follows:
\begin{lem}\label{lem:state}
For any $\vp \in L^\infty(\Omega)$ and $(\f, \g) \in L^2(\Omega, \R^d) \times L^2(\Gamma_g, \R^d)$, there exists a unique solution $\u \in \HD $ to \eqref{state} 
satisfying
\begin{align}\label{state:weak}
\int_\Omega \C(\vp) \E(\u) : \E(\bv) \dx = \int_\Omega \hh(\vp) \f \cdot \bv \dx + \int_{\Gamma_g} \g \cdot \bv \dH \text{ for all } \bv \in \HD,
\end{align}
where $\mathcal{H}^{d-1}$ denotes the $(d-1)$-dimensional Hausdorff measure. Moreover, there exists a positive constant $C$, independent of $\vp$, such that 
\begin{align}\label{state:bdd}
\| \u \|_{H^1} \leq C ( 1 + \| \vp \|_{L^\infty}).
\end{align}
In addition, let $M>0$ and $\vp_i \in L^\infty(\Omega)$ with $\|\vp_i\|_{L^\infty} \leq M$, $i=1,2,$ and $\u_i$ be the associated solution to \eqref{state:weak}. Then, there also exists a 
positive constant $C$, depending on the data of the system and $M$, but independent of the difference $\vp_1-\vp_2$, such that 
\begin{align*}
 \| \u_1 - \u_2\|_{H^1} \leq C \| \vp_1 - \vp_2 \|_{L^\infty}.
\end{align*}
\end{lem}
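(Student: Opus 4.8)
The plan is to recognize \eqref{state:weak} as an abstract variational problem on the Hilbert space $\HD$ and to invoke the Lax--Milgram theorem, as in \cite[Thms.~3.1, 3.2]{BGFS}. I would set $a(\u,\bv) := \int_\Omega \C(\vp)\E(\u):\E(\bv)\dx$ and $F(\bv) := \int_\Omega \hh(\vp)\f\cdot\bv\dx + \int_{\Gamma_g}\g\cdot\bv\dH$ on $\HD$. The decisive structural observation is that, since $g$ maps $\R$ into $[-1,1]$, for every $\vp \in L^\infty(\Omega)$ the interpolated tensor can be written as $\C(\vp) = \tfrac12\big(1+g(\vp)\big)\C_0 + \tfrac12\big(1-g(\vp)\big)\C_1$, i.e.\ as a pointwise convex combination of $\C_0$ and $\C_1$; it therefore inherits the uniform ellipticity bounds $\theta|\bA|^2 \leq \C(\vp)\bA:\bA \leq \Lambda|\bA|^2$ with the same constants $\theta,\Lambda$, independently of $\vp$. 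Continuity of $a$ follows from the upper bound and Cauchy--Schwarz, while coercivity follows from the lower bound combined with Korn's inequality on $\HD$ (available because $\Gamma_D$ carries positive surface measure), yielding $a(\u,\u) \geq \theta\no{\E(\u)}^2 \geq c\no{\u}_{H^1}^2$. Boundedness of $F$ is immediate from $\hh(\vp)\in[0,1]$, the trace theorem on $\Gamma_g$, and the hypotheses on $(\f,\g)$, so Lax--Milgram gives existence and uniqueness of $\u \in \HD$.

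For the a priori estimate \eqref{state:bdd} I would test \eqref{state:weak} with $\bv = \u$ and combine coercivity on the left with the bound on $F$ on the right, obtaining $c\no{\u}_{H^1}^2 \leq a(\u,\u) = F(\u) \leq C\big(\no{\f}_{L^2} + \no{\g}_{L^2(\Gamma_g)}\big)\no{\u}_{H^1}$. Dividing by $\no{\u}_{H^1}$ in fact produces a constant independent of $\vp$ here, which a fortiori implies \eqref{state:bdd}.

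For the Lipschitz dependence, let $\u_i$ solve \eqref{state:weak} with datum $\vp_i$, subtract the two weak formulations (the $\g$-contribution cancels), add and subtract $\C(\vp_1)\E(\u_2)$, and test with $\bv = \u_1-\u_2$ to arrive at
\[
\int_\Omega \C(\vp_1)\E(\u_1-\u_2):\E(\u_1-\u_2)\dx = \int_\Omega \big(\C(\vp_2)-\C(\vp_1)\big)\E(\u_2):\E(\u_1-\u_2)\dx + \int_\Omega \big(\hh(\vp_1)-\hh(\vp_2)\big)\f\cdot(\u_1-\u_2)\dx.
\]
The left-hand side is bounded below by $c\no{\u_1-\u_2}_{H^1}^2$ as before. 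On the right I would exploit $\C(\vp_1)-\C(\vp_2) = \tfrac12\big(g(\vp_1)-g(\vp_2)\big)(\C_0-\C_1)$ together with the global Lipschitz continuity of $\hh$; assuming the local Lipschitz continuity of $g$ enjoyed by the concrete choices above (with constant depending on $M$ on the interval containing the ranges of $\vp_1,\vp_2$), both $\no{\C(\vp_1)-\C(\vp_2)}_{L^\infty}$ and $\no{\hh(\vp_1)-\hh(\vp_2)}_{L^\infty}$ are controlled by $C(M)\no{\vp_1-\vp_2}_{L^\infty}$. Using the uniform bound $\no{\u_2}_{H^1}\leq C(M)$ from the previous step and Cauchy--Schwarz, the right-hand side is $\leq C(M)\no{\vp_1-\vp_2}_{L^\infty}\no{\u_1-\u_2}_{H^1}$, and cancelling one factor of $\no{\u_1-\u_2}_{H^1}$ gives the claim.

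I expect the only genuinely delicate point to be the \emph{uniform} coercivity: one must check that the interpolated tensor remains elliptic with $\vp$-independent constants, which is exactly what the convex-combination structure together with $g(\R)\subseteq[-1,1]$ guarantees, and that Korn's inequality is indeed at our disposal on $\HD$. The remaining arguments are routine energy estimates.
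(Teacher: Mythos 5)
Your proof is correct, and it is essentially the argument the paper relies on: the paper does not prove Lemma \ref{lem:state} directly but simply cites \cite[Thms.~3.1, 3.2]{BGFS}, whose proofs are exactly the Lax--Milgram/Korn argument you give, with uniform ellipticity coming from the convex-combination structure $\C(\vp)=\tfrac12(1+g(\vp))\C_0+\tfrac12(1-g(\vp))\C_1$ and the stability estimate from subtracting the two weak formulations and testing with $\u_1-\u_2$. The two caveats you flag --- that Korn's inequality requires $\Haus(\Gamma_D)>0$ and that the Lipschitz estimate needs $g$ to be locally Lipschitz on $[-M,M]$ --- are indeed implicit standing assumptions here (both example choices of $g$ satisfy this), and your observation that \eqref{state:bdd} in fact holds with a $\vp$-independent constant is a correct, harmless sharpening made possible by the clamping of $\hh$ to $[0,1]$.
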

The above result provides a notion of a solution operator, also referred to as the control-to-state operator, $\So : \vp \mapsto \u$ where $\u \in \HD$ is the unique solution to \eqref{state:weak} corresponding to $\vp \in L^\infty(\Omega)$. As such, we may view the phase field variable $\vp$ as a design variable that encodes the elastic response of the associated material distribution through the operator $\So$ and seek an optimal material distribution $\opt$ that fulfills suitable constraints and minimizes some cost functional.

\subsection{Cost functional and design space}
We define the design space, i.e., the set of admissible design variables, as
\begin{align}\label{V}
	\V_m := \Big \{ f \in H^1(\Omega) \, : \, f \in [-1,1] \text{ a.e.~in } \Omega, \, \int_\Omega f \dx = m|\Omega| \Big \} \subset H^1(\Omega) \cap L^\infty(\Omega),
\end{align}
where $m \in (-1,1)$ is a fixed constant for the mass constraint, and motivated by the context of additive manufacturing, we propose the following cost functional to be minimized:
\begin{align}\label{obj}
	J(\vp,\u) = \widehat \alpha \int_\Omega \frac{\eps}{2} |\gamma(\nabla \vp)|^2 + \frac{1}{\eps} \Psi(\vp) \dx + \beta \Big ( \int_\Omega \hh(\vp) \f \cdot \u \dx + \int_{\Gamma_g} \g \cdot \u \dH \Big ){.}
\end{align}
In \eqref{obj}, the second term premultiplied by $\beta > 0$ is the mean compliance functional, while the first term premultiplied by $\widehat \alpha > 0$ is an anisotropic Ginzburg--Landau functional with anisotropy function $\gamma$.  We delay the detailed discussion on $\gamma$ to the next section and remark here that for the isotropic case $\gamma(\x) = |\x|$, $\x \in \R^d$, it is well-known that the Ginzburg--Landau functional is an approximation of the perimeter functional.  Therefore, \eqref{obj} can be viewed as a weighted sum between (anisotropic) perimeter penalization and mean compliance.

Lastly, for the non-negative potential function $\Psi$ in \eqref{obj}, we require that it has $\pm 1$ as its global minima. While many choices are available, in light of the design space $\V_m$, we consider $\Psi$ as the double obstacle potential
\begin{align}\label{obs}
\Psi(s) = \begin{cases}
\Psi_0(s):=\frac{1}{2}(1-s^2) & \text{ if } s \in [-1,1], \\
+\infty & \text{ otherwise}.
\end{cases}
\end{align}
It is worth pointing out that it holds $\Psi(\vp) = \Psi_0(\vp)$ for $\vp \in \V_m$. Then, the structural optimization problem we study can be expressed as the following:
\begin{align}\tag{\bf P}\label{opt}
	\hbox{Minimize $J(\vp, \u)$ subject to $\vp \in \V_m$ and $\u$ solving \eqref{state}. }
\end{align}

\section{Anisotropic Ginzburg--Landau functional}\label{sec:aniso}
In the phase field methodology, the (isotropic) Ginzburg--Landau functional reads as
\begin{align}\label{E}
	E_\eps({\vp}) = \int_\Omega \frac \eps  2 |\nabla \vp|^2 + \frac 1\eps \Psi(\vp) \dx,
\end{align}
where $0 < \eps \ll 1$ and $\Psi$ is a non-negative double-well potential that has $\pm 1$ as its minima, i.e., $\{s \in \R \, : \, \Psi(s) = 0 \} = \{\pm 1\}$. Heuristically, the minimization of \eqref{E} in $H^1(\Omega)$ results in minimizers that take near constant values close to $\pm 1$ in large regions of the domain $\Omega$, which are separated by thin interfacial regions with thickness scaling with $\eps$ over which the functions \an{transit} smoothly from one value to the other. Formally in the limit $\eps \to 0$ these minimizers converge to functions that only take values in $\{\pm 1\}$. This is made rigorous in the framework of $\Gamma$-convergence by the seminal work of Modica and Mortola \cite{Modica,MM}.

To facilitate the forthcoming discussion, we review some basic properties for functions of bounded variations. For a more detailed introduction we refer the reader to \cite{Ambro,EG}.  A function $u \in L^1(\Omega)$ is a function of bounded variation in $\Omega$ if its distributional gradient $\D u$ is a finite Radon measure. The space of all such functions is denoted as $\BV(\Omega)$ and its endowed with the norm $\no{\cdot}_{\BV(\Omega)} = \no{\cdot}_{L^1(\Omega)} + \TV(\cdot)$, where for $u \in \BV(\Omega)$, its total variation $\TV(u)$ is defined as
\[
\TV(u) := |\D u|(\Omega) = \sup \Big \{ \int_\Omega u \, \div {\boldsymbol \phi} \dx \text{ s.t. } {\boldsymbol \phi} \in C^1_0(\Omega, \R^d), \, \no{{\boldsymbol \phi}}_{\an{L^\infty(\Omega,\R^d)}} \leq 1 \Big \}.
\]
The space $\BV(\Omega, \{a,b\})$ denotes the space of all $\BV(\Omega)$ functions taking values in $\{a,b\}$. We say that a set $U \subset \Omega$ is a set of finite perimeter, or a Caccioppoli set, if its characteristic function $\chi_U$, where $\chi_U(\x) = 1$ if $\x \in U$ and $\chi_U(\x) = 0$ if $\x \notin U$, belongs to $\BV(\Omega,\{0,1\})$. The perimeter of a set of finite perimeter $U$ in $\Omega$ is defined as 
\[
\P_\Omega(U) := |\D \chi_U|(\Omega) = \TV(\chi_U),
\]
while its reduced boundary $\pd^*U$ is the set of all points ${\an{\bf y}}\in \R^d$ such that $|\D \chi_{U}|(B_r({\an{\bf y}})) > 0$ for all $r > 0$, with $B_r({\an{\bf y}})$ denoting the ball of radius $r$ centered at ${\an{\bf y}}$, and 
\[
\bnu_U({\an{\bf y}}) := \lim_{r \to 0} \frac{\D \chi_U(B_r({\an{\bf y}}))}{|\D \chi_U|(B_r({\an{\bf y}}))} \text{ exists and } |\bnu_U({\an{\bf y}})| = 1.
\]
The unit vector $\bnu_U({\an{\bf y}})$ is called the measure theoretical unit inner normal to $U$ at ${\an{\bf y}}$, a theorem by De Giorgi yields the connection $\P_\Omega(U) = \Haus(\pd^* U)$, see, e.g., \cite{Ambro}. Then, the result of Modica and Mortola \cite{Modica,MM} can be expressed as follows: The $\Gamma$-limit of the extended functional
\[
\E_\eps(u) := \begin{cases}
\displaystyle \int_\Omega \frac{\eps}{2} |\nabla u|^2 + \frac{1}{\eps} \Psi(u) \dx & \text{ if } u \in H^1(\Omega), \\[10pt]
+\infty & \text{ elsewhere in } L^1(\Omega),
\end{cases} 
\]
is equal to 
\[
\E_0(u) := \begin{cases}
c_\Psi \P_\Omega (\{u = 1\}) & \text{ if } u \in \BV(\Omega, \{-1,1\}), \\[10pt]
+\infty & \text{ elsewhere in } L^1(\Omega),
\end{cases}
\]
with the constant $c_\Psi :=  \int_{-1}^1 \sqrt{2\Psi(s)} ds$.

\begin{remark}
For $u \in \BV(\Omega, \{-1,1\})$, setting $A := \{u = 1\}$ leads to the relation $u(\x) = 2 \chi_{A}(\x) - 1$, and hence 
\[
|\D u|(\Omega) = 2|\D \chi_A|(\Omega) = 2 \P_\Omega(\{u = 1\}).
\]
\end{remark}

Of particular interest is the following property of $\Gamma$-convergence, which states that if (i) $\E_0$ is the $\Gamma$-limit of $\E_\eps$, (ii) $u_\eps$ is a minimizer to $\E_\eps$ for every $\eps > 0$, (iii) $\{u_\eps\}_{\eps > 0}$ is a precompact sequence, then every limit of a subsequence of $\{u_\eps\}_{\eps > 0}$ is a minimizer for $\E_0$. This provides a methodology to construct minimizers of $\E_0$ as limits of minimizers to $\E_\eps$, provided the associated $\Gamma$-limit is precisely $\E_0$.

Returning to our discussion and problem in additive manufacturing, in \cite{AllaireDEFM17} it was proposed to use anisotropic perimeter functionals to model the overhang angle constraint. In our notation, for a set $U$ of bounded variation with $\bnu_U$ as the measure theoretical inward unit normal, these functionals take the form
\begin{align}\label{anis:peri}
\P_\gamma(U) := \int_{\pd^*U} \gamma(\bnu_U) \dH
\end{align}
with a $C^1$ function $\gamma: \R^d \to \R$. Two choices were suggested in \cite{AllaireDEFM17}:
\begin{align*}
\gamma_a(\bnu) := [\min(0,\bnu \cdot \bm{e}_d + \cos \psi )]^2, \quad \gamma_b(\bnu) := \prod_{i=1}^m (\bnu - \bnu_{\psi_i})^2,
\end{align*}
where $\psi$ is a fixed angle threshold, $\bm{e}_d$ denotes the build direction, and $\psi_i:\R^d \to \R$, for $i = 1, \dots, m$, are given pattern functions with $\bnu_{\psi_i} := \nabla \psi_i/|\nabla \psi_i|$. The first choice $\gamma_a$ penalizes the regions of the boundary $\pd^* U$ where the angle between the outward normal $(-\bnu_U)$ and the negative build direction $(-\bm{e}_d)$ is smaller than $\psi$, while the second choice $\gamma_b$ compels the unit normal $\bnu_U$ to be close to at least one of the directions $\bnu_{\psi_i}$.

A phase field approximation of the anisotropic perimeter functional \eqref{anis:peri} is the following anisotropic Ginzburg--Landau functional
\begin{align}\label{aniso:GL}
E_{\gamma,\eps}(\vp) := \int_\Omega \frac{\eps}{2} |\gamma(\nabla \vp)|^2 + \frac{1}{\eps} \Psi(\vp) \dx,
\end{align}
where as before, $\Psi$ is a double well potential with $\pm 1$ as its minima. Then, for a convex $\gamma: \R^d \to \R$ that is positively homogeneous of degree one (see \eqref{gamma:1}), one has the analogue of the result by Modica and Mortola for anisotropic energies (see \cite{Barroso,Bellettini,Bouchitte,Owen}, and also Lemma \ref{lem:Gamma} below), that is
\begin{align*}
\Gamma-\lim_{\eps \to 0} \E_{\gamma,\eps}(u) = \E_{\gamma,0}(u),
\end{align*}
where the extended functionals \an{$\E_{\gamma,\eps}$ and $\E_{\gamma,0}$} are defined as
\begin{subequations}
\begin{alignat}{2}
\label{E:gam:eps} \E_{\gamma,\eps}(u) & := \begin{cases}
\displaystyle \int_\Omega \frac{\eps}{2} |\gamma(\nabla u)|^2 + \frac{1}{\eps} \Psi(u) \dx & \text{ if } u \in H^1(\Omega), \\[10pt]
+\infty & \text{ elsewhere in } L^1(\Omega),
\end{cases} \\
\label{E:gam:0} \E_{\gamma,0}(u) & := \begin{cases}
c_{\Psi} P_\gamma(\{u = 1\}) & \text{ if } u \in \BV(\Omega, \{-1,1\}), \\[10pt]
+\infty & \text{ elsewhere in } L^1(\Omega).
\end{cases}
\end{alignat}
\end{subequations}
This motivates our consideration of the objective functional \eqref{obj} and of the study of the related minimization problem for the overhang angle constraint. Furthermore, let us formally state the corresponding {\it sharp interface limit} $(\eps \to 0)$ of the structural optimization problem as:
\begin{align}\tag{$\mathbf{P}_0$}\label{opt:SI}
	\hbox{Minimize $J_0(\vp, \u)$ subject to $\vp \in \BV_m(\Omega, \{-1,1\})$ and $\u$ solving \eqref{state}, }
\end{align}
where $\BV_m(\Omega, \{-1,1\}) = \{ f \in \BV(\Omega, \{-1,1\}) \, : \, \int_\Omega f \dx = m |\Omega| \}$ and
\[
J_0(\vp,\u) = \an{\widehat\alpha} c_{\Psi} {\cal P}_\gamma(\{\vp=1\}) + \beta \Big ( \int_\Omega \hh(\vp) \f \cdot \u \dx + \int_{\Gamma_g} \g \cdot \u \dH \Big ).
\]
Note that by Lemma \ref{lem:state}, the solution operator $\So : \vp \mapsto \u$ is well-defined for $\vp \in \BV(\Omega, \{-1,1\})$. The connection between \eqref{opt} and \eqref{opt:SI} will be explored in Section \ref{sec:SI}.

\subsection{Anisotropy function, Wulff shape and Frank diagram}
Consider an anisotropic density function $\gamma :\R^d \to \R$ satisfying
\begin{enumerate}[label={\bf (A\arabic{*})}, ref={\bf A\arabic{*}}]
\item \label{gamma:1}
	$\gamma$ is positively homogeneous of degree one:
\begin{align*}
	\gamma(\lambda \q) = \lambda \gamma(\q) \quad \text{ for all } \q \in \R^d \setminus \{\0\}, \, \lambda \geq  0,
\end{align*}
which immediately implies $\gamma(\0) = 0$.
\item \label{gamma:2}
	 $\gamma$ is positive for non-zero vectors:
\begin{align*}
	\gamma(\q) >0 \quad  \text{ for all } \q \in \R^d  \setminus \{\0\}.
\end{align*}
\item \label{gamma:3}
	 $\gamma$ is convex:
\[
\gamma(s \p + (1-s) \q) \leq s \gamma(\p) + (1-s) \gamma(\q) \quad \text{ for all } \p, \q \in \R^d, \, s \in [0,1].
 \]
\end{enumerate}
Note that it is sufficient to assign values of $\gamma$ on the unit sphere $\partial B_1(\0)$ in $\R^d$, since by the one-homogeneity property \eqref{gamma:1} we can define for any $\p \neq \0$ with $\hat{\p} = \p/ |\p|$
\[
\gamma(\p) := |\p| \gamma \big ( \hat{\p} \big ).
\]
A consequence of the convexity assumption \eqref{gamma:3} is that $\gamma$ is continuous (in fact it is even locally Lipschitz continuous, see, e.g., \cite[E4.6, p.~129]{Alt}).
Then, from \eqref{gamma:2} we have that $\gamma$ has a positive minimum $\tilde{c}$ on the compact set $\partial B_1(\0)$, and consequently by \eqref{gamma:1}, $\gamma(\q)=\gamma\big(|\q| \hat{\q} \big)=|\q|\gamma\big( \hat{\q}\big)\geq \tilde{c}|\q|$ for $\q\neq \0$. Thus, \eqref{gamma:1}--\eqref{gamma:3} yield the following property:
\begin{enumerate}[label={\bf (A\arabic{*})}, ref={\bf A\arabic{*}}, start=4]
\item \label{gamma:4} $\gamma$ is Lipschitz continuous and there exists a constant $\tilde c > 0$ such that 
\begin{align*}
	\gamma(\q) \geq  \tilde c |\q| \quad \text{ for all }  \q \in \R^d.
\end{align*}
\end{enumerate}
  Moreover, with $\lambda = 2$ in \eqref{gamma:1} and $s = \frac{1}{2}$ in \eqref{gamma:3}, it is not difficult to verify that $\gamma$ satisfies the triangle inequality.  Hence, provided that $\gamma(\q)=\gamma(-\q)$ for all $\q \in \R^d$, any $\gamma$ satisfying \eqref{gamma:1}--\eqref{gamma:3} defines a norm on $\R^d$. 

For such anisotropy density functions and smooth hypersurfaces $\Gamma$ with normal vector field $\bnu$, we define the anisotropic interfacial energy as
\begin{align}\label{aniso}
\F^\gamma(\Gamma) := \int_\Gamma \gamma(\bnu) \dH.
\end{align}
Then, the isoperimetric problem involves finding a hypersurface $\Gamma^*$ that minimizes $\F^\gamma$ under a volume constraint.  This problem has been well-studied by many authors (see for instance \cite{Fonseca1,Fonseca2,Taylor1,Taylor2} and \cite{Gurtin} and the references therein) and the solution is given as the boundary of the region called the Wulff shape \cite{Wulff}
\begin{align}
	\label{Wulff}
	W = \{ \rr \in \R^d \,:\, \gamma^*(\rr) \leq 1 \},
\end{align}
where $\gamma^*$ is the dual function of $\gamma$ defined as
\begin{align}
	\label{dualnorm}
	\gamma^*(\rr) = \sup_{\q \in \R^d \setminus \{\0\}} \frac {\rr \cdot \q}{\gamma(\q)} 
	\quad 
	\text{ for all } \rr \in \R^d.
\end{align}
The dual function $\gamma^*$ also satisfies \eqref{gamma:1}--\eqref{gamma:3} and hence we can view the Wulff shape $W$ as the 1-ball of $\gamma^*$.  Besides the Wulff shape, another region of interest that is used to visualize the effects of the anisotropy is the Frank diagram \cite{Frank}, which is  defined as the 1-ball of $\gamma$:
\begin{align}
	\label{Frank}
	F = \{ \q \in \R^d \,:\, \gamma(\q) \leq 1 \},
\end{align}
which, due to \eqref{gamma:3}, is always a convex subset of $\R^d$. To see how the shape of the boundary of $F$ determines which directions of unit sphere in $\R^d$ is preferred by the anisotropic density function $\gamma$, let us consider three examples.

\begin{ex}[Isotropic case]
Consider $\gamma(\q) = |\q|$ for $\q \in \R^d$.  Then, the associated Frank diagram is just the unit ball in $\R^d$, with boundary $\{\gamma(\q) = |\q| = 1\}$. As all points on the boundary are equidistant to the origin, all directions of the unit sphere in $\R^d$ are equally preferable. 
\end{ex}

\begin{ex}[Convex example]\label{eg:Frank1}
Consider the Frank diagram shown in the left of Figure \ref{fig:Frankeg}, whose boundary is composed of a circular arc $C$ and a horizontal line $L$.  The black dot denotes the origin in $\R^2$.  For any unit vector in $\R^2$, we denote by $\phi \in [0,2\pi)$ the angle it makes with the negative $y$-axis measured anticlockwise (see also Figure \ref{fig:overhang_def}).  Then, there exists $\theta > 0$ such that all unit vectors with angle in $[0,\theta] \cup [2\pi - \theta, 2\pi)$ are associated with the horizontal line $L$ in Figure \ref{fig:Frankeg}, while all unit vectors with angle in $(\theta, 2 \pi - \theta)$ are associated with the circular arc $C$. Notice, if the origin lies on $L$, then $\theta = \frac{\pi}{2}$, and $C$ is the upper semicircle.

Let $\p \in C$ and $\q \in L$ be arbitrary, and set $\hat \p = \frac{\p}{|\p|}$, $\hat \q = \frac{\q}{|\q|}$ as their unit vectors. It is clear from the figure that $|\p| \geq |\q|$, and since $\gamma(\p) = \gamma(\q) = 1$, by \eqref{gamma:1} we see that
\begin{align*}
1 = |\p| \gamma \big ( \hat \p \big ) = |\q| \gamma \big ( \hat \q \big ).
\end{align*}
This implies that $\gamma(\hat \p) \leq \gamma (\hat \q)$, and from the viewpoint of minimizing the interfacial energy $\F^\gamma$ in \eqref{aniso}, directions $\hat \p$ are preferable to directions $\hat \q$. Consequently, from the Frank diagram in Figure \ref{fig:Frankeg} we can see that the associated anisotropy density function $\gamma$ prefers directions with angle in $(\theta, 2\pi - \theta)$ over directions with angle in $[0,\theta] \cup [2 \pi - \theta, 2 \pi)$.
\begin{figure}[h]
\centering
\includegraphics[width=0.85\textwidth]{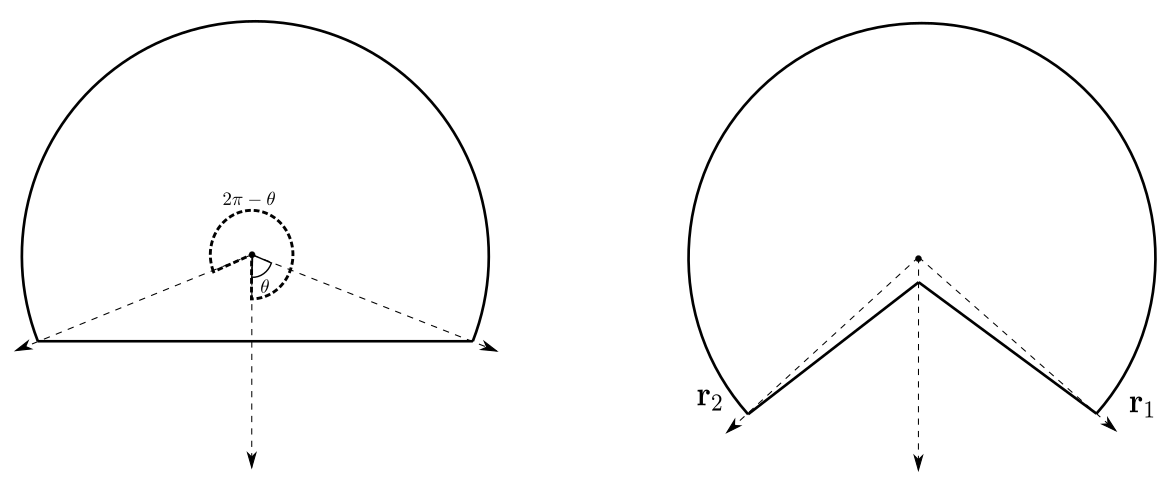}
\caption{Illustrations of the Frank diagram for Examples \ref{eg:Frank1} and \ref{eg:Frank2}. (Left) Convex case and (Right) non-convex case.}
\label{fig:Frankeg}
\end{figure}
\end{ex}

\begin{ex}[Non-convex example]\label{eg:Frank2}
Consider the Frank diagram shown in the right of Figure \ref{fig:Frankeg}, whose boundary encloses a non-convex set. Let $\bm{r}_1$ and $\bm{r}_2$ denote the two unit vectors whose angles, say $\theta$ and $2 \pi - \theta$, respectively, associate to the two endpoints of the circular arc. From previous discussions, the anisotropy density function $\gamma$ will prefer directions with angles in $(\theta, 2 \pi - \theta)$. 

With such $\gamma$, consider two spatial points $\bm{x}_1$ and $\bm{x}_2$ at the same height, see Figure \ref{fig:drippingFrank}. Connecting them via a horizontal straight line is energetically expensive since this is associated to a direction with angle zero (where on the boundary of the Frank diagram is closest to the origin). An energetically more favorable connection is a zigzag path from $\bm{x}_1$ and $\bm{x}_2$ whose normal vectors oscillate between $\bm{r}_1$ and $\bm{r}_2$. This is similar to a behavior termed ``dripping effect'' in \cite{AllaireDEFM17} (see also \cite[Fig.~15]{Qian}), which is the tendency for shapes to develop oscillatory boundaries in order to meet the overhang angle constraints.
\begin{figure}[h]
\centering
\includegraphics[width=0.4\textwidth]{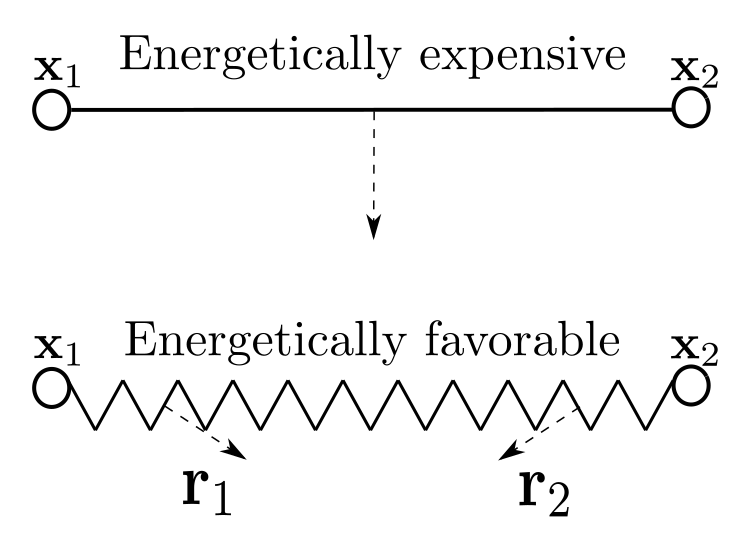}
\caption{Connection between two spatial points with non-convex anisotropic density function $\gamma$ whose Frank diagram looks like the right of Figure \ref{fig:Frankeg}.}
\label{fig:drippingFrank}
\end{figure}
\end{ex}

\subsection{Relevant examples of anisotropic density function}
Motivated by the above discussion, in this section we provide some examples of $\gamma$ that achieve the Frank diagrams shown in Figure \ref{fig:Frankeg}.

\subsubsection{An example of a convex anisotropy}
\an{To fix the ideas, let us begin with the two-dimensional case.}
For a fixed constant $\alpha \in (0,1)$, we consider the function
\begin{align}\label{2D:gam}
\gamma_{\alpha}(\x) = \begin{cases}
 \sqrt{x_1^2+x_2^2} & \text{ if } \x = (x_1,x_2) \in V, \\[10pt]
\displaystyle -\frac{1}{\alpha} x_2 & \text{ if } (x_1,x_2) \in \R^2 \setminus \overline{V} =: V^c,
\end{cases}
\end{align}
where the set $V\an{\subseteq\R^2}$ will be determined in the following.  To achieve the boundary of the Frank diagram $F$ shown in the left of Figure \ref{fig:Frankeg}, we notice that $\pd F \cap V$ is a circular arc of radius $1$ centered at the origin, while $\pd F \cap V^c$ implies $x_2 = - \alpha$ which is a horizontal line segment at height $-\alpha$.  Following the convention in Figure \ref{fig:overhang_def} where angles are measured anticlockwise from the negative $x_2$-axis, we can parameterize the circular arc by $(\cos \phi, \sin \phi)$ for $\phi \in [\theta, 2 \pi -\theta]$ where $\theta := \cos^{-1}(\alpha)$, see Figure \ref{fig:Frankeg}. Hence, the set $V$ in the definition \eqref{2D:gam} can be characterized as
\begin{align*}
V = \Big \{ \cos \phi \leq \alpha \, : \, \phi \in [0,2\pi] \Big \}.
\end{align*}
\begin{remark}\label{rem:iso}
Note that in the limiting case $\alpha=1$, the above set $V$ is the entire plane $\R^2$. Hence we have the isotropic case $\gamma(\x) = |\x|$ for all $\x \in \R^2$, and $\pd F$ is simply the unit circle. 
\end{remark}
For a parametric characterization of the set $V$, let $c := \frac{\alpha}{\sqrt{1-\alpha^2}}$ and consider the two straight lines $\{x_2 = c x_1\}$ and $\{x_2 = -c x_1\}$ dividing $\R^2$ into eight regions (see Figure \ref{fig:gamEg2D}), which we label as Region $1, 2, \dots, 8$ in an anticlockwise direction starting from the positive $x_1$-axis.  Then, the horizontal straight line portion $\pd F \cap V^c$ of the Frank diagram at height $x_2 = - \alpha$ is contained in Regions 6 and 7, whose union is described by the set $\{x_2 < -c |x_1|\}$, while the circular arc portion $\pd F \cap V$ is contained in Regions $1, \dots, 5$ and $8$, whose union is described by the set $\{x_2 \geq -c |x_1|\}$.  Hence, a parameteric characterization of the set $V$ in \eqref{2D:gam} is
\begin{align*}
V = \Big \{ x_2 \geq - \frac{\alpha}{\sqrt{1-\alpha^2}} |x_1| \Big \} = \Big \{ x_2 \geq - \alpha | \x| \Big \}.
\end{align*}
Generalizing to the $d$-dimensional case, we obtain 
\begin{align}\label{D:gam}
 \gamma_{\alpha}(\x) = \begin{cases}
 |\x| & \text{ if } \displaystyle x_d \geq -\alpha |\x| \\[10pt]
\displaystyle -\frac{1}{\alpha} x_d & \text{ if } \displaystyle x_d < - \alpha |\x|
\end{cases} \text{ for } \x = (x_1,\dots , x_d) \in \R^d.
\end{align}
From \eqref{D:gam} we see that $\gamma_{\alpha} \in C^0(\R^d)$ but it is not continuously differentiable at the points $x_d = - \alpha |\x|$. Furthermore, it is clear that $\gamma_{\alpha}$ satisfies the assumptions \eqref{gamma:1}--\eqref{gamma:3} and hence \eqref{gamma:4}.
\begin{figure}[h]
\centering
\includegraphics[width=0.4\textwidth]{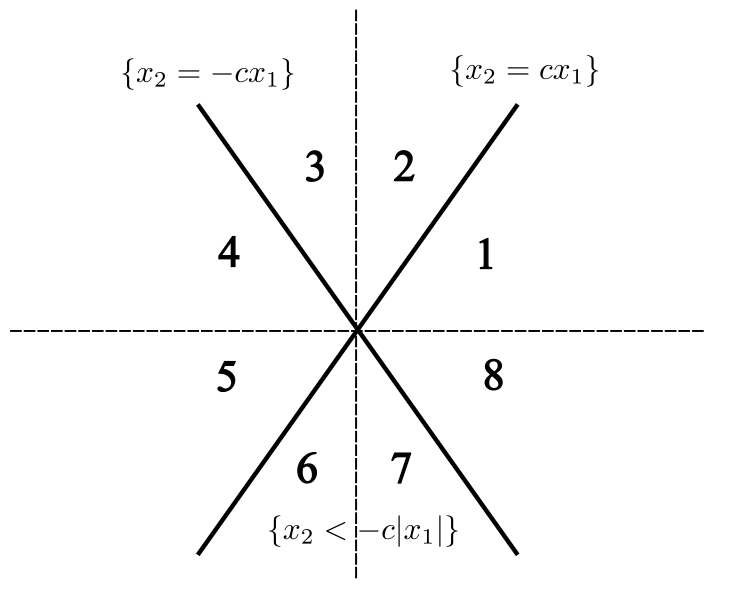}
\caption{Schematics for the parametric characterization.}
\label{fig:gamEg2D}
\end{figure}

\subsubsection{An example of a non-convex anisotropy}
For completeness, we provide an example of an anisotropic function $\gamma$ that yields a non-convex Frank diagram as seen in the right of Figure \ref{fig:Frankeg}.  Referring to the construction of the previous example, we present the two-dimensional case first. Fix $\lambda \in (0,1)$ and let $(0,-\lambda \alpha)^{\top}$ denote the intersection of the two straight line segments (which we call $L_1$ and $L_2$ respectively) just below the origin. Denoting by $\bm{p}_-$ the endpoint of the left line segment $L_1$ and by $\bm{p}_{+}$ the endpoint of the right line segment $L_2$ that connects $(0,-\lambda \alpha)^{\top}$ to the circular arc of radius $1$, a short calculation shows that $\bm{p}_{\pm} = (\pm \sqrt{1-\alpha^2}, -\alpha)^{\top}$.

A choice of tangent vector for $L_1$ is $\bm{\tau} = (-\sqrt{1-\alpha^2}, (\lambda - 1)\alpha)^{\top}$ so that a normal vector for $L_1$ is $\bm{n} = ((1-\lambda)\alpha, -\sqrt{1-\alpha^2})^{\top}$. Consider, for some constant $b>0$ to be identified,
\[
\gamma(\x) = b \big |(1-\lambda)\alpha x_1 - x_2 \sqrt{1-\alpha^2}  \big |
\]
for $\x = (x_1, x_2) \in \{x_1 \leq 0, x_2 \leq \frac{\alpha}{\sqrt{1-\alpha^2}}x_1\}$. This corresponds to Region 6 in the right of Figure \ref{fig:gamEg2D} that contains the line segment $L_1$, which can be parameterized as 
\begin{align*}
L_1 = \big \{(-\zeta \sqrt{1-\alpha^2}, -\lambda \alpha - \zeta(1-\lambda)\alpha)^{\top} \, : \, \zeta \in [0,1] \big \}. 
\end{align*}
Notice that $\gamma$ satisfies \eqref{gamma:1}--\eqref{gamma:2} (due to the modulus), and a short calculation shows that for $\x \in L_1$,
\[
\gamma(\x) = b \big |\lambda \alpha \sqrt{1-\alpha^2} \big | = b \lambda \alpha \sqrt{1-\alpha^2}.
\]
Hence, choosing $b = (\lambda \alpha \sqrt{1-\alpha^2})^{-1} > 0$ yields that $\gamma(\x) = 1$ for $\x \in L_1$. Similarly, a choice of tangent vector for $L_2$ is $\bm{\tau} = (\sqrt{1-\alpha^2}, (\lambda - 1)\alpha)^{\top}$, so that a normal vector for $L_2$ is $\bm{n} = ((\lambda - 1)\alpha, -\sqrt{1-\alpha^2})^{\top}$. We consider
\begin{align*}
\gamma(\x) & = \frac{1}{\lambda \alpha \sqrt{1-\alpha^2}} \big | (\lambda - 1) \alpha x_1 - x_2 \sqrt{1-\alpha^2} \big | \\
& = \frac{1}{\lambda \alpha \sqrt{1-\alpha^2}} \big | (1-\lambda) \alpha x_1 + x_2 \sqrt{1-\alpha^2} \big | 
\end{align*}
for $\x \in \{x_1 \geq 0, x_2 \leq - \frac{\alpha}{\sqrt{1-\alpha^2}} x_1\}$ which corresponds to Region 7 in Figure \ref{fig:gamEg2D} that contains the line segment $L_2$. Then, a short calculation shows that $\gamma(\x) = 1$ for $\x \in L_2$. Thus, an example of an anisotropic function $\gamma$ that give rise to a Frank diagram whose boundary is the \an{right} figure in Figure \ref{fig:Frankeg} is
\begin{align}\label{2D:gam:Ncon}
\gamma_{\alpha, \lambda}(\x) =\begin{cases}
 |\x| & \text{ if } \displaystyle x_2 \geq - \alpha |\x|, \\[10pt]
\displaystyle \Big | \frac{1-\lambda}{\lambda \sqrt{1-\alpha^2}} x_1 - \frac{1}{\lambda \alpha} x_2 \Big |& \text{ if } \displaystyle x_1 \leq 0, \, x_2 <- \alpha |\x|, \\[10pt]
\displaystyle \Big | \frac{1-\lambda}{\lambda \sqrt{1-\alpha^2}} x_1 + \frac{1}{\lambda \alpha} x_2 \Big |& \text{ if } \displaystyle x_1 > 0, \, x_2 < -\alpha |\x|,
\end{cases}
\end{align}
for $\alpha \in (0,1)$, $\lambda \in (0,1)$ and $\x = (x_1, x_2) \in \R^2$. Notice that in the limit $\lambda \to 1$, we recover the convex anisotropic function $\gamma_{\alpha}$ defined in \eqref{D:gam}. 

To generalize to the $d$-dimensional case, we notice that the lines $L_1$ and $L_2$ in the above discussion are now replaced by the lateral surface $S$ of a cone with apex $(0,0,\dots, 0,-\lambda \alpha) \in \R^d$, which can be parameterized as 
\[
S = \left \{\x = (\tilde{\x}, x_d) \in \R^d \, : \, x_d + \lambda \alpha = - \frac{(1-\lambda)\alpha}{\sqrt{R^2-\alpha^2}} |\tilde{\x}|, \, x_d \in [-\alpha,-\lambda \alpha] \right \}.
\]
Then, by similar arguments leading to \eqref{2D:gam:Ncon}, we obtain the function
\begin{align*}
\gamma_{\alpha,\lambda} (\x)= \begin{cases}
\displaystyle |\x| & \text{ if } \displaystyle x_d \geq - \alpha |\x|, \\[10pt]
\displaystyle \Big | \frac{1-\lambda}{\lambda \sqrt{R^2-\alpha^2}} |\tilde{\x}| + \frac{1}{\lambda \alpha} x_d \Big |& \text{ if } \displaystyle x_d <- \alpha |\x|,
\end{cases}
\end{align*}
for $\x = (\tilde{\x}, x_d) \in \R^d$, $\tilde{\x} \in \R^{d-1}$, where we can verify that $\gamma_{\alpha,\lambda}(\x) = 1$ for $\x \in S$. In Figure \ref{fig:frank_nc} we display the Frank diagrams for non-convex anisotropy functions of the form \eqref{2D:gam:Ncon} with $\lambda = 0.5$ and $\alpha \an{= 0.7, 0.5, 0.3}$.

\begin{figure}[h]
\centering
\includegraphics[angle=-0,width=0.3\textwidth]{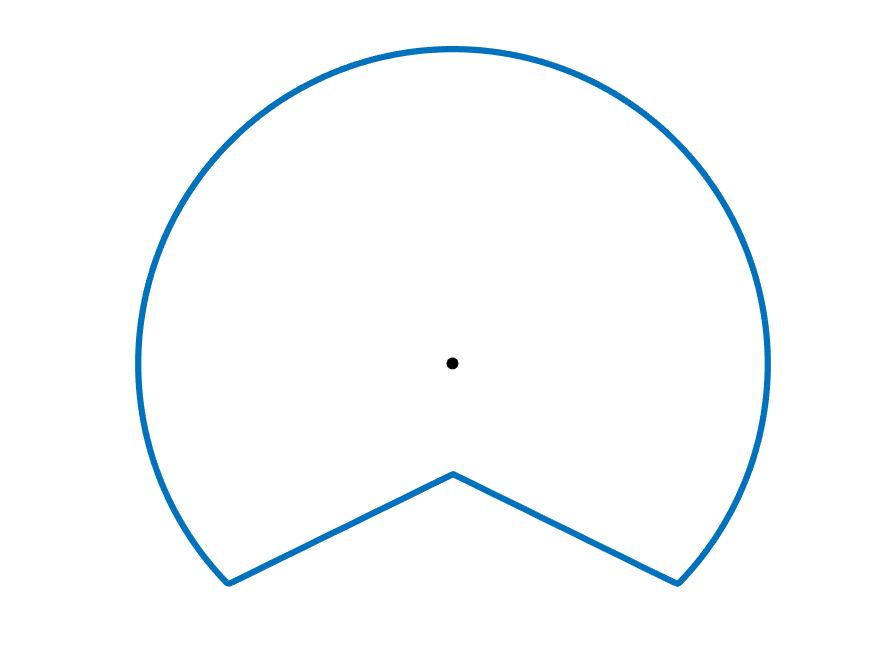} 
\includegraphics[angle=-0,width=0.3\textwidth]{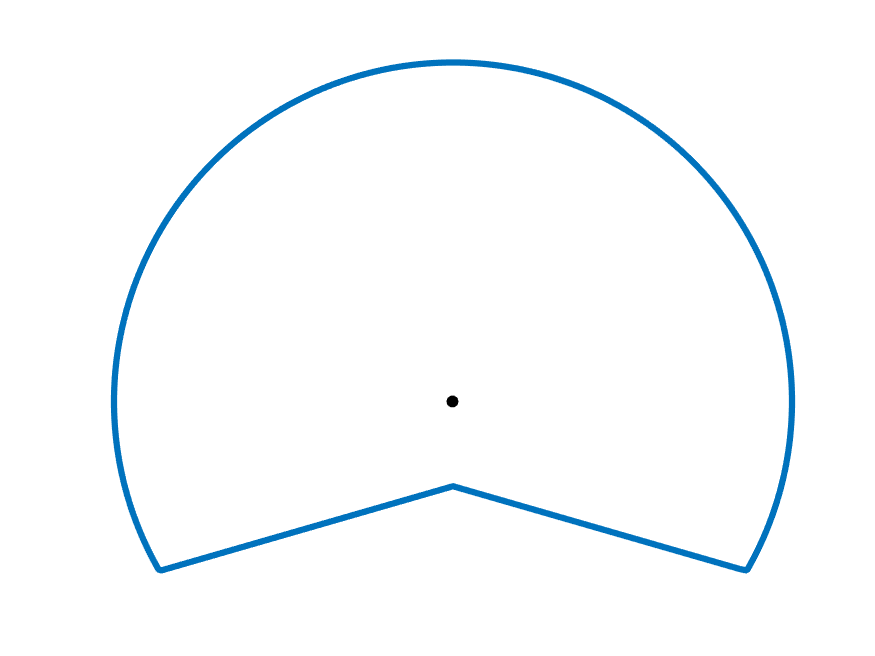}
\includegraphics[angle=-0,width=0.3\textwidth]{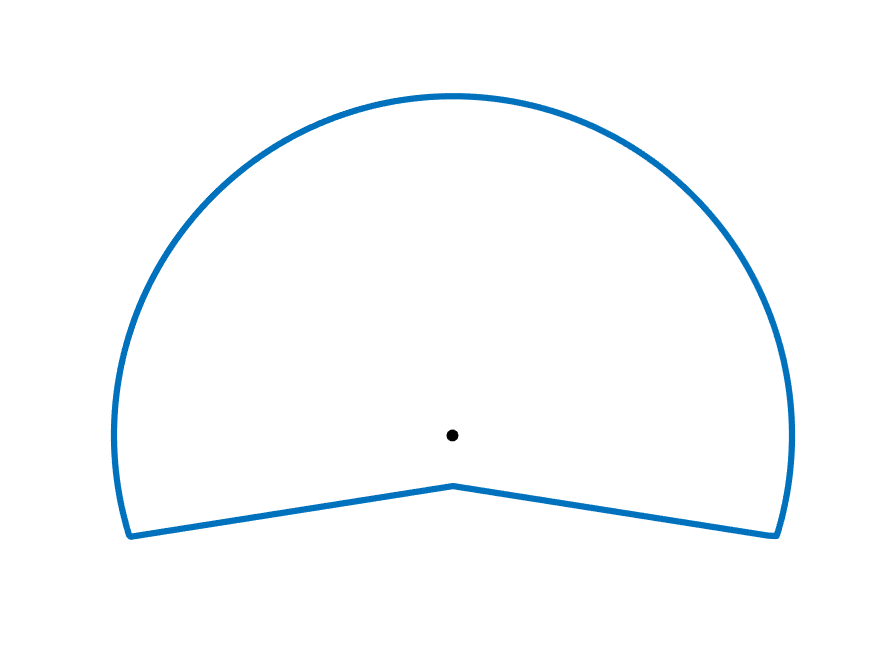}
\caption{Frank diagrams for non-convex anisotropy \eqref{2D:gam:Ncon} with $\lambda=0.5$. From left to right: $\alpha = 0.7$, $0.5$ and $0.3$.
}
\label{fig:frank_nc}
\end{figure}

\subsection{Subdifferential characterization}
The analysis of the structural optimization problem \eqref{opt} follows almost analogously as in \cite{BGFS}. The major difference is the anisotropic Ginzburg--Landau functional in \eqref{obj}. From the examples of $\gamma$ discussed in the previous subsection, our interest lies in anisotropy density functions that are convex and continuous, but not necessarily $C^1(\R^d)$, which necessitates a non-trivial modification to the analysis performed in \cite{BGFS}. In this subsection we focus only on the gradient part of \eqref{obj} and investigate its subdifferential in preparation for the first-order necessary optimality conditions for \eqref{opt} (cf.~Theorem \an{\ref{thm:optcond}}).

We define $A: \R^d \to [0,\infty)$ as
\begin{align}\label{defn:A}
	A(\q) = \frac{1}{2} |\gamma(\q)|^2 \quad  \text{ for all } \q \in \R^d,
\end{align}
and from \eqref{gamma:1}--\eqref{gamma:4}, it readily follows that $A$ is convex, continuous with $A(\0) = 0$, positive for non-zero vectors, positively homogeneous of degree two:
\[
A(\lambda \q) = \lambda^2 A(\q) \quad \text{ for all } \q \in \R^d \setminus \{\0\}, \quad \lambda \geq 0,
\] 
and there exist positive constants $c$ and $C$ such that
\begin{align}\label{prop:A}
A(\q) \geq c| \q|^2, \quad |A(\q) - A(\rr)| \leq C |\q - \rr| (|\q| + |\rr|) \quad \text{ for all } \q, \rr \in \R^d.
\end{align}
Associated to such a function $A$, we consider the integral functional 
\begin{align}\label{def:F}
\F: H^1(\Omega) \to \R, \quad \vp \mapsto \int_\Omega A(\nabla \vp) \dx.
\end{align}
Convexity of $A$ immediately imply that $\F$ is convex, proper and weakly lower semicontinuous in $H^1(\Omega)$, see, e.g., \cite[Thm.~1, \S 8.2.2]{Evans}.  Consequently, its subdifferential $\pd \F : H^1(\Omega) \to 2^{H^1(\Omega)^*}$, defined as
\begin{align*}
\pd \F(\vp) = \Big \{ \xi \in H^1(\Omega)^* \, : \, \F(\phi) - \F(\vp) \geq \inn{\xi,\phi - \vp}_{H^1}\, \; \forall \phi \in H^1(\Omega) \Big \},
\end{align*}
is a maximal monotone operator from $H^1(\Omega)$ to $H^1(\Omega)^*$ (see \cite[Thm.~2.43]{Barbu2}), which is equivalent to the property that, for any $f \in H^1(\Omega)^*$, there exists at least one solution $\vp_0 \in D(\pd \F)$ with $\xi_0 \in \pd \F(\vp_0)$ such that
\begin{align*}
\vp_0 + \xi_0 = f \quad \text{ in } H^1(\Omega)^*.
\end{align*}
Let us now provide a useful lemma that characterizes the subdifferential $\pd \F(\vp)$ in terms of elements of $\pd A(\nabla \vp)$ (see also \cite[p.~146, Problem 2.7]{Barbu2}). Heuristically, elements of $\pd \F(\vp)$ are the negative weak divergences of elements of $\pd A(\nabla \vp)$.

\begin{lem}\label{LEM:A}
Consider the map $\G : H^1(\Omega) \to 2^{H^1(\Omega)^*}$ defined by
\begin{align*}
\vp \mapsto \Big \{ v \in H^1(\Omega)^* \, : \, & \exists\, \bxi \in L^2(\Omega, \R^d) \text{ s.t. } \bxi \in \pd A(\nabla \vp) \text{ a.e. in } \Omega, \\
& \text{ and } \inn{v,p}_{H^1} = (\bxi, \nabla p) \; \forall p \in H^1(\Omega) \Big \}.
\end{align*}
Then, for $\vp \in D(\pd \F)$, it holds that
\begin{align*}
\pd \F(\vp) = \G(\vp).
\end{align*}
\end{lem}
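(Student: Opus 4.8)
The plan is to establish the two inclusions $\G(\vp)\subseteq\pd\F(\vp)$ and $\pd\F(\vp)\subseteq\G(\vp)$ separately, the first being elementary and the second carrying the real content. For the inclusion $\G(\vp)\subseteq\pd\F(\vp)$, take $v\in\G(\vp)$ with associated $\bxi\in L^2(\Omega,\R^d)$ satisfying $\bxi\in\pd A(\nabla\vp)$ a.e. The pointwise subgradient inequality for $A$ reads $A(\rr)-A(\nabla\vp(x))\ge\bxi(x)\cdot(\rr-\nabla\vp(x))$ for a.e.\ $x$ and all $\rr\in\R^d$; choosing $\rr=\nabla\phi(x)$ and integrating over $\Omega$ yields $\F(\phi)-\F(\vp)\ge(\bxi,\nabla(\phi-\vp))=\inn{v,\phi-\vp}_{H^1}$ for every $\phi\in H^1(\Omega)$, which is precisely $v\in\pd\F(\vp)$. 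All terms are finite because the growth bound in \eqref{prop:A} forces the affine estimate $|\bxi|\le C(1+|\nabla\vp|)\in L^2(\Omega)$, obtained by testing the subgradient inequality with $\rr=\nabla\vp+\bxi/|\bxi|$ (when $\bxi\neq\0$) and using \eqref{prop:A}.

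For the reverse inclusion $\pd\F(\vp)\subseteq\G(\vp)$ I would factor $\F$ through the gradient. Writing $\nabla\colon H^1(\Omega)\to L^2(\Omega,\R^d)$ for the bounded linear gradient operator and $\J\colon L^2(\Omega,\R^d)\to\R$, $\J(\q):=\int_\Omega A(\q)\dx$, one has $\F=\J\circ\nabla$. Since $A$ is finite, continuous and of quadratic growth by \eqref{prop:A}, the functional $\J$ is convex and everywhere continuous on $L^2(\Omega,\R^d)$, so the qualification hypothesis of the convex chain rule (continuity of $\J$ at a point of the range of $\nabla$) holds trivially and $\pd\F(\vp)=\nabla^*\big(\pd\J(\nabla\vp)\big)$, where the adjoint $\nabla^*\colon L^2(\Omega,\R^d)\to H^1(\Omega)^*$ is characterized by $\inn{\nabla^*\bxi,p}_{H^1}=(\bxi,\nabla p)$. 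It then remains to invoke the classical pointwise characterization of the subdifferential of a convex integral functional (Rockafellar; see also \cite[p.~146, Problem~2.7]{Barbu2}): $\bxi\in\pd\J(\q)$ if and only if $\bxi\in L^2(\Omega,\R^d)$ with $\bxi(x)\in\pd A(\q(x))$ for a.e.\ $x$. Unwinding the definitions shows $\nabla^*(\pd\J(\nabla\vp))=\G(\vp)$, which closes the argument.

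An alternative route, closer to the maximal monotone framework recalled before the statement, is available. From the first inclusion, $\G\subseteq\pd\F$, so $\G$ is monotone; to see it is maximal it suffices to check the range condition $R(\mathcal I+\G)=H^1(\Omega)^*$, with $\mathcal I$ the Riesz isomorphism $\inn{\mathcal I u,p}_{H^1}=(u,p)_{H^1}$. For fixed $f\in H^1(\Omega)^*$ this reduces to minimizing the strictly convex, coercive and weakly lower semicontinuous functional $\vp\mapsto\tfrac12\no{\vp}_{H^1}^2+\F(\vp)-\inn{f,\vp}_{H^1}$; its unique minimizer $\vp_0$ satisfies $f-\mathcal I\vp_0\in\G(\vp_0)$. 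Since $\G\subseteq\pd\F$ and $\pd\F$ is maximal monotone, maximality of $\G$ forces $\G=\pd\F$.

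The main obstacle in either route is the pointwise characterization of the subdifferential of the integral functional, i.e.\ extracting from an abstract subgradient an $L^2$ vector field $\bxi$ that lies in $\pd A(\nabla\vp)$ \emph{almost everywhere} and represents it as a weak divergence. Its nontrivial direction is the passage from the integrated inequality back to the pointwise one, which is a measurable selection statement. The two properties collected in \eqref{prop:A} are exactly what make this work: the coercivity $A(\q)\ge c|\q|^2$ supplies coercivity of $\J$ (and of the auxiliary minimization in the second route), while the quadratic growth gives the affine bound $|\bxi|\le C(1+|\nabla\vp|)$, ensuring that every measurable selection of $\pd A(\nabla\vp)$ automatically belongs to $L^2(\Omega,\R^d)$ and hence that the representation $v=\nabla^*\bxi$ is well posed.
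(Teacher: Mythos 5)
Your main argument (the chain--rule route) is correct, but it is genuinely different from the proof in the paper. You factor $\F = \J \circ \nabla$ with the integral functional $\J(\q) = \int_\Omega A(\q) \dx$ on $L^2(\Omega,\R^d)$, observe that the quadratic growth in \eqref{prop:A} makes $\J$ finite and continuous on all of $L^2(\Omega,\R^d)$ (so the qualification hypothesis of the convex chain rule holds and $\pd \F(\vp) = \nabla^* \pd \J(\nabla \vp)$), and then invoke the classical pointwise characterization of $\pd\J$ for convex integral functionals. This works, and it is worth noting why: the chain rule is exactly what converts perturbations constrained to be gradients (all one has when working with $\pd\F$ directly) into arbitrary $L^2$ perturbations, after which the pointwise identification is elementary --- test the subgradient inequality for $\J$ with $\nabla\vp + \chi_E \rr$ for measurable $E \subset \Omega$ and $\rr \in \mathbb{Q}^d$; no delicate measurable selection is required since the candidate $\bxi$ is already in hand. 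The paper instead gives a self-contained, constructive proof: it shows that $\G$ is maximal monotone by solving, for arbitrary $f \in H^1(\Omega)^*$, the inclusion $\vp - \div \bxi = f$ with $\bxi \in \pd A(\nabla\vp)$ a.e.\ (problem \eqref{aux:prob}), via the Moreau--Yosida regularization $B_\lambda$ of $B = \pd A$ plus a vanishing viscosity term, uniform estimates, and Minty's monotonicity trick; maximality of $\pd\F$ together with the easy inclusion $\G \subset \pd\F$ then forces equality. Your route is considerably shorter at the cost of outsourcing the key step to a cited theorem (which the paper itself only mentions as a ``see also'' reference, \cite[p.~146, Problem 2.7]{Barbu2}); the paper's route is longer but hands-on, and its regularization machinery is of independent use. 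The easy inclusion $\G(\vp) \subset \pd\F(\vp)$ is identical in both.

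Your ``alternative route,'' however, is circular as written, and you should either drop it or repair it. Minimizing $\vp \mapsto \tfrac12 \no{\vp}_{H^1}^2 + \F(\vp) - \inn{f,\vp}_{H^1}$ does produce a unique minimizer $\vp_0$, but the inclusion it satisfies (via the sum rule, since the quadratic term is continuous) is $f - \mathcal{I}\vp_0 \in \pd\F(\vp_0)$. The claim that $f - \mathcal{I}\vp_0 \in \G(\vp_0)$ is precisely the pointwise representation the lemma is trying to prove: nothing in the variational argument manufactures an $L^2$ vector field $\bxi \in \pd A(\nabla\vp_0)$ representing this functional. So the argument only verifies $R(\mathcal{I} + \pd\F) = H^1(\Omega)^*$, i.e.\ maximality of $\pd\F$, which is already known, rather than the needed range condition for $\G$. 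The paper's proof is exactly the repaired version of this route: it solves the range problem for $\G$ itself, and the entire purpose of the approximation \eqref{aux:weak} and the subsequent limit passage is to construct the a.e.\ subgradient $\bxi$ that your sketch assumes for free. Since your chain--rule argument stands on its own, this flaw does not sink the proposal, but as stated the alternative is not a proof.
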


\begin{remark}\label{rmk:Evans}
If $A :\R^d \to [0,\infty)$ is smooth, convex, coercive with bounded second derivatives, i.e., there exists $C> 0$ such that $|A_{p_i,p_j}(\p)| \leq C$ for all $\p \in \R^d$ and $1 \leq i,j \leq d$, then the corresponding characterization of the subdifferential of $\F$ can be found in \cite[Thm.~4, \S 9.6.3]{Evans}. Namely, $\pd \F(\vp)$ is single-valued and $\pd \F(\vp) = \{- \sum_{i=1}^d \pd_{x_i} [A_{p_i}(\nabla \vp)] \}$.
\end{remark}

\begin{proof}[Proof of Lemma \ref{LEM:A}]
To begin, fix $\vp \in H^1(\Omega)$, $v \in \G(\vp)$ and let $\bxi \in L^2(\Omega, \R^d)$ with $\bxi(\x) \in \pd A(\nabla \vp(\x))$ for almost every $\x \in \Omega$ satisfying $\inn{v,p}_{H^1} = (\bxi, \nabla p)$ for all $p \in H^1(\Omega)$. By the definition of the subgradient $\pd A(\nabla \vp(\x))$, we have
\begin{align*}
A(\q) - A(\nabla \vp(\x)) \geq \bxi(\x) \cdot (\q - \nabla \vp(\x)) \quad \forall \q \in \R^d.
\end{align*}
Choosing $\q = \nabla \phi(\x)$ where $\phi \in H^1(\Omega)$ is arbitrary, and integrating over $\Omega$ we infer that 
\begin{align*}
\F(\phi) - \F(\vp) \geq \int_\Omega \bxi \cdot (\nabla \phi - \nabla \vp) \dx = \inn{v, \phi - \vp}_{H^1} \quad \forall \phi \in H^1(\Omega).
\end{align*}
This shows $v \in \pd \F(\vp)$ and hence the inclusion $\G(\vp) \subset \pd \F(\vp)$. 

For the converse inclusion, we show $\G : H^1(\Omega) \to 2^{H^1(\Omega)^*}$ is a maximal monotone operator, and as $\pd \F$ is maximal monotone we obtain by definition $\G(\vp) = \pd \F(\vp)$ for all $\vp \in D(\pd \F)$.  Thus, it suffices to study the solvability of the following problem: For $f \in H^1(\Omega)^*$, find a pair $(\vp, \bxi) \in H^1(\Omega) \times L^2(\Omega,\R^d)$ such that $\bxi \in \pd A(\nabla \vp)$ almost everywhere in $\Omega$ and
\begin{align}\label{aux:prob}
\vp - \div \bxi = f \quad  \text{ in } H^1(\Omega)^*.
\end{align}
This result can be obtained by following a strategy outlined in \cite[Thm.~2.17]{Barbu1}. 
First, from recalling the properties of $A$ in \eqref{prop:A}, we deduce that for $\bxi   = (\xi_1, \dots, \xi_d)^{\top} \in \pd A(\q)$,
\begin{align}\label{aux:xi:prop}
\bxi  \cdot \q \geq A(\q) \geq c|\q|^2, \quad |\xi_i| \leq C \big ( 1 +  |\q| \big ),
\end{align}
where the first inequality is obtained from applying the relation $A(\rr ) - A(\q) \geq \bxi  \cdot (\rr  - \q)$ with the choice $\rr  = \0$, and the second inequality is obtained from the second property of $A$ in \eqref{prop:A} with the choice $\rr  = \q + {\bm e}_i$ where ${\bm e}_i$ is the canonical $i$-th unit vector in $\R^d$.  Taking the maximum over $i \in \{1, \dots, d\}$ and then the supremum over all elements $\bxi  \in \pd A(\q)$, the second inequality in \eqref{aux:xi:prop} implies
\begin{align}\label{aux:xi:prop:2}
\sup \Big \{ |\bxi | \, : \, \bxi  \in \pd A(\q) \Big \} \leq C \big ( 1 +  |\q| \big ).
\end{align}
Next, for $\lambda > 0$, we introduce the Moreau--Yosida approximation $B_\lambda$ of $B := \pd A$ as
\begin{align*}
B_\lambda({\bm s}) = \frac{1}{\lambda} \Big ({\bm s} - ( I + \lambda B)^{-1} {\bm s} \Big ), \quad {\bm s} \in \R^d,
\end{align*}
where $I$ denotes the identity map. The maximal monotonicity of $B$ guarantees that $(I + \lambda B)^{-1}$ is a well-defined operator and it is bounded independently of $\lambda$ (see \cite[p.~524, Thms.~1 and 2]{Evans}). It is well-known that $B_\lambda$ is single-valued, Lipschitz continuous with Lipschitz constant $\frac{1}{\lambda}$, and $B_\lambda({\bm s})$ is an element of $B(( I + \lambda B)^{-1} {\bm s})$. Then, by \eqref{aux:xi:prop} and \eqref{aux:xi:prop:2} (cf.~\cite[p.~84, (2.143)-(2.144)]{Barbu1}), we infer from the identity $\q = (I + \lambda B)^{-1} \q + \lambda B_\lambda(\q)$ that for all $\q \in \R^d$ and $\lambda \in (0,\lambda_0)$, $\lambda_0 := \frac{1}{c}$ with constant $c$ from \eqref{prop:A},
\begin{subequations}\label{aux:Blam}
\begin{alignat}{2}
|B_\lambda(\q)| & \leq C \big ( 1 + |(I + \lambda B)^{-1} \q| \big ) \leq C \big ( 1 + |\q| \big ), \\
\notag B_\lambda(\q) \cdot \q & = B_\lambda(\q) \cdot (I + \lambda B)^{-1} \q + \lambda |B_\lambda(\q)|^2 \geq c |(I + \lambda B)^{-1} \q|^2 + \lambda |B_\lambda(\q)|^2 \\
& \geq \frac{c}{2} |\q|^2 + \lambda (1 - c \lambda) |B_\lambda(\q)|^2 \geq \frac{c}{2} |\q|^2,
\end{alignat}
\end{subequations}
with constants independent of $\lambda$. We now consider the approximation problem: For $f \in H^1(\Omega)^*$, $\lambda\in (0,\lambda_0),$ find $\vp_\lambda \in H^1(\Omega)$ such that 
\begin{align}\label{aux:weak}
\inn{T_\lambda \vp_\lambda,\zeta}_{H^1} := \int_\Omega \vp_\lambda \zeta + B_\lambda(\nabla \vp_\lambda) \cdot \nabla \zeta + \lambda \nabla \vp_\lambda \cdot \nabla \zeta \dx = \inn{f,\zeta}_{H^1} \quad \forall \zeta \in H^1(\Omega).
\end{align}
It is not difficult to verify that $T_\lambda$ is monotone, demicontinuous (i.e., $\vp_n \to \vp$ in $H^1(\Omega)$ implies $T_\lambda \vp_n \rightharpoonup T_\lambda \vp$ in $H^1(\Omega)^*$ as $n\to\infty$), and coercive (i.e., $\inn{T_\lambda \vp, \vp}_{H^1} \geq c_0 \| \vp \|_{H^1}$ for some $c_0 > 0$ independent of $\lambda$), and so by standard results (see, e.g., \cite[p.~37, Cor.~2.3]{Barbu1}) there exists at least one solution $\vp_\lambda \in H^1(\Omega)$ to $T_\lambda \vp_\lambda = f$ in $H^1(\Omega)^*$.

We then establish uniform estimates for $\{\vp_\lambda\}_{\lambda \in (0,\lambda_0)}$ whose weak limit in $H^1(\Omega)$ will be a solution to \eqref{aux:prob}, thereby verifying the maximal monotonicity of $\G$ and hence completing the proof.  Choosing $\zeta = \vp_\lambda$ in \eqref{aux:weak} and recalling the identity $\q = (I+\lambda B)^{-1} \q + \lambda B_\lambda(\q)$, we obtain
\begin{equation}\label{aux:bdd}
\begin{aligned}
& \| \vp_\lambda \|^2 + \lambda \Big ( \| B_\lambda(\nabla \vp_\lambda) \|^2 +  \| \nabla \vp_\lambda \|^2 \Big ) \\
& \quad + \int_\Omega B_\lambda(\nabla \vp_\lambda) \cdot (I + \lambda B)^{-1}\nabla \vp_\lambda \dx  = \inn{f, \vp_\lambda}_{H^1}.
\end{aligned}
\end{equation}
From \eqref{aux:Blam} we immediately infer
\begin{align*}
\| \vp_\lambda \|^2 + \frac{c}{2} \| \nabla \vp_\lambda \|^2 \leq \| f \|_{H^1(\Omega)^*} \| \vp_\lambda \|_{H^1(\Omega)} \leq C \| f \|_{H^1(\Omega)^*}^2 + \min \big (\tfrac{c}{4}, \tfrac{1}{2} \big ) \| \vp_\lambda \|_{H^1(\Omega)}^2,
\end{align*}
which provides a uniform estimate for $\vp_\lambda$ in $H^1(\Omega)$ with respect to $\lambda$.  On the other hand, also from \eqref{aux:Blam}, particularly $B_\lambda(\q) \cdot \q \geq c |(I +\lambda B)^{-1} \q|^2$, we observe that 
\begin{align*}
\| B_\lambda(\nabla \vp_\lambda) \|^2 \leq C\big ( 1 +  \| (I + \lambda B)^{-1} \nabla \vp_\lambda \|^2 \big ) & \leq C, \\
\| (I + \lambda B_\lambda)^{-1} \nabla \vp_\lambda - \nabla \vp_\lambda \|^2 = \lambda^2 \| B_\lambda(\nabla \vp_\lambda) \|^2& \leq C\lambda^2,
\end{align*}
which implies, along a non-relabeled subsequence $\lambda \to 0$,
\begin{subequations}\label{aux:cmpt}
\begin{alignat}{2}
\vp_\lambda & \rightharpoonup \vp  \quad &&\text{ in } H^1(\Omega), \\
(I + \lambda B)^{-1} \nabla \vp_\lambda & \rightharpoonup \nabla \vp \quad && \text{ in } L^2(\Omega, \R^d), \\
B_\lambda(\nabla \vp_\lambda) & \rightharpoonup \bxi \quad && \text{ in } L^2(\Omega, \R^d),
\end{alignat}
\end{subequations}
for some limit functions $\vp$ and $\bxi$ satisfying
\begin{align*}
\int_\Omega \vp \zeta + \bxi \cdot \nabla \zeta \dx = \inn{f,\zeta}_{H^1} \quad \forall \zeta \in H^1(\Omega).
\end{align*}
To show that $\bxi(\x) \in \pd A(\nabla \vp(\x))$ for almost every $\x \in \Omega$, for arbitrary ${\bm \phi} \in D(\pd A)$ and ${\bm \zeta} \in B({\bm \phi}) \subset L^2(\Omega, \R^d)$, we use the inequality
\begin{align*}
\int_\Omega (B_\lambda(\nabla \vp_\lambda) - {\bm \zeta}) \cdot ( (I + \lambda B)^{-1} \nabla \vp_\lambda -  {\bm \phi}) \dx \geq 0
\end{align*}
obtained from the relation $B_\lambda(\nabla \vp_\lambda) \in B(( I + \lambda B)^{-1} \nabla \vp_\lambda)$ and the monotonicity of $B $.  Then, passing to the limit $\lambda \to 0$ with \eqref{aux:cmpt}, as well as
\begin{align*}
\int_\Omega B_\lambda(\nabla \vp_\lambda) \cdot (I + \lambda B)^{-1}\nabla \vp_\lambda \dx & = \inn{f,\vp_\lambda}_{H^1} - \| \vp_\lambda \|^2 - \lambda \| \nabla \vp_\lambda \|^2 - \lambda \| B_\lambda(\nabla \vp_\lambda)\|^2 \\
& \to \inn{f,\vp}_{H^1} - \| \vp \|^2 = \int_\Omega \bxi \cdot \nabla \vp \dx,
\end{align*}
we arrive at
\begin{align*}
\int_\Omega (\bxi - {\bm \zeta}) \cdot (\nabla \vp - {\bm \phi}) \dx \geq 0.
\end{align*}
Picking ${\bm \phi} = (I+B)^{-1}(\bxi + \nabla \vp)$, so that ${\bm \zeta}:= \bxi + \nabla \vp - {\bm \phi} \in B({\bm \phi})$, we see that the above reduces to 
\begin{align*}
\int_\Omega |\nabla \vp - {\bm \phi}|^2 \dx = 0,
\end{align*}
which implies $\nabla \vp = {\bm \phi}$ and ${\bm \zeta}= \bxi \in B(\nabla \vp) = \pd A(\nabla \vp)$ almost everywhere in $\Omega$. This shows that $(\vp, \bxi) \in H^1(\Omega) \times L^2(\Omega, \R^d)$ is a solution to \eqref{aux:prob} concluding the proof.
\end{proof}

\section{Analysis of the structural optimization problem}\label{sec:ana}
By Lemma \ref{lem:state} the solution operator $\So : L^\infty(\Omega) \to H^1_D(\Omega,\R^d)$, $\So(\vp) = \u$ where $\u$ is the unique solution to \eqref{state} corresponding to the design variable $\vp$, is well-defined. This allows us to consider the reduced functional $\J: \V_m \to \R$,
\[
\J (\phi) := J(\phi, \So(\phi))
\]
in the structural optimization problem \eqref{opt}. Invoking \cite[Thm.~1, \S 8.2.2]{Evans}, the convexity of $\gamma$ and hence of $A(\cdot) = \frac{1}{2} |\gamma(\cdot)|^2$ yields that the gradient term in \eqref{obj} is weakly lower semicontinuous in $H^1(\Omega)$. Then, following the proof of \cite[Thm.~4.1]{BGFS} we obtain the existence of an optimal design to \eqref{opt}.

\begin{thm}\label{thm:min}
Suppose that \eqref{gamma:1}--\eqref{gamma:3} hold, and let $(\f, \g) \in L^2(\Omega, \R^d) \times L^2(\Gamma_g, \R^d)$. Then, \eqref{opt} admits a minimiser $\opt \in \V_m$.
\end{thm}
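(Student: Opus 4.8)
The plan is to run the direct method of the calculus of variations on the reduced functional $\J(\phi):=J(\phi,\So(\phi))$ over the admissible set $\V_m$. First I would check that $\J$ is bounded from below. Testing the weak formulation \eqref{state:weak} with $\bv=\So(\phi)\in\HD$ and using the coercivity $\C\bA:\bA\ge\theta|\bA|^2$ shows that the compliance term equals $\int_\Omega\C(\phi)\E(\So(\phi)):\E(\So(\phi))\dx\ge 0$, while the anisotropic Ginzburg--Landau term is nonnegative since $|\gamma(\cdot)|^2\ge 0$ and $\Psi\ge 0$; hence $\J\ge 0$ on $\V_m$. Next, let $\{\phi_n\}\subset\V_m$ be a minimizing sequence. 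Every $\phi_n$ satisfies $\phi_n\in[-1,1]$ a.e., so $\|\phi_n\|_{L^\infty}\le 1$, and the uniform energy bound together with the coercivity \eqref{gamma:4}, namely $|\gamma(\nabla\phi_n)|^2\ge\tilde c^2|\nabla\phi_n|^2$, yields a uniform bound for $\{\phi_n\}$ in $H^1(\Omega)$.

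I would then extract, along a subsequence, a weak limit $\phi_n\rightharpoonup\opt$ in $H^1(\Omega)$; since by Rellich's theorem the embedding $H^1(\Omega)\hookrightarrow L^2(\Omega)$ is compact, a further subsequence satisfies $\phi_n\to\opt$ strongly in $L^2(\Omega)$ and a.e.\ in $\Omega$. The admissibility $\opt\in\V_m$ follows because the pointwise bound $\opt\in[-1,1]$ a.e.\ is preserved under a.e.\ convergence, while the mass constraint $\int_\Omega\opt\dx=m|\Omega|$ passes to the limit under $L^2$ convergence; equivalently, $\V_m$ is convex and closed, hence weakly closed in $H^1(\Omega)$.

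It remains to prove $\liminf_n\J(\phi_n)\ge\J(\opt)$. The gradient term is handled by the weak lower semicontinuity of $\phi\mapsto\int_\Omega A(\nabla\phi)\dx$ with $A=\tfrac12|\gamma|^2$ convex and continuous, as recorded via \cite[Thm.~1, \S 8.2.2]{Evans}; the potential term converges outright, since $\int_\Omega\Psi(\phi_n)\dx=\tfrac12\int_\Omega(1-\phi_n^2)\dx\to\tfrac12\int_\Omega(1-\opt^2)\dx$ by strong $L^2$ convergence. The delicate point is the compliance term, because the Lipschitz estimate of Lemma \ref{lem:state} is phrased in the $L^\infty$ norm of the control difference, which is \emph{not} available here. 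Instead I would establish continuity of the control-to-state map directly: the uniform bound \eqref{state:bdd} gives $\u_n:=\So(\phi_n)\rightharpoonup\u^*$ in $\HD$ up to a subsequence, and since $\C(\phi_n)\to\C(\opt)$ and $\hh(\phi_n)\to\hh(\opt)$ a.e.\ with uniform bounds, the product of an a.e.-convergent uniformly bounded sequence with a weakly $L^2$-convergent one converges weakly in $L^2$. Passing to the limit in \eqref{state:weak} then identifies $\u^*=\So(\opt)$, and uniqueness upgrades the convergence to the full (sub)sequence.

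Finally, with $\hh(\phi_n)\f\to\hh(\opt)\f$ strongly in $L^2(\Omega,\R^d)$ and $\u_n\rightharpoonup\u^*$ weakly in $L^2(\Omega,\R^d)$, together with the weak trace convergence $\u_n|_{\Gamma_g}\rightharpoonup\u^*|_{\Gamma_g}$ in $L^2(\Gamma_g,\R^d)$ tested against the fixed datum $\g$, the compliance term converges to its value at $(\opt,\So(\opt))$. Combining the weak lower semicontinuity of the gradient term with the convergence of the potential and compliance terms yields $\liminf_n\J(\phi_n)\ge\J(\opt)$, so $\opt\in\V_m$ is the desired minimizer. I expect the main obstacle to be precisely this passage to the limit in the compliance, i.e.\ proving continuity of $\So$ under only weak $H^1$ (a.e.) convergence of the design, where the $L^\infty$ Lipschitz bound of Lemma \ref{lem:state} does not apply and the weak-times-strong product argument in the state equation is required.
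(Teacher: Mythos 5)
Your proof is correct and follows essentially the same route as the paper's: the direct method, using the weak sequential closedness of $\V_m$, weak lower semicontinuity of the anisotropic gradient term, and the weak $H^1$ convergence of the states $\So(\phi_n)\rightharpoonup\So(\opt)$. The paper only sketches these steps (deferring the state-convergence detail to \cite[Thm.~4.1]{BGFS}), whereas you make the weak--strong product argument in the state equation explicit; this is a difference of detail, not of method.
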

\begin{proof}
Since the proof is now rather standard, we only sketch some of the essential details. Using the bound \eqref{state:bdd} we infer that the reduced functional $\J$ is bounded from below in $\V_m$. This allows us to consider a minimizing sequence $\{\phi_n\}_{n \in \N} \subset \V_m$ such that $\J(\phi_n) \to \inf_{\zeta \in \V_m} \J(\zeta)$ as $n\to\infty$. Then, again by \eqref{state:bdd} and also \eqref{gamma:4} we deduce that $\{\phi_n\}_{n \in \N}$ is uniformly bounded in $H^1(\Omega)\cap L^\infty(\Omega)$, from which we extract a non-relabeled subsequence converging weakly to $\opt$ in $H^1(\Omega)$. As $\V_m$ is a \an{convex and closed set, hence weakly sequentially closed,} we infer also $\opt \in \V_m$ and by passing to the limit infimum of $\J(\phi_n)$, employing the weak lower semicontinuity of the gradient term in \eqref{obj} and also the weak convergence $\So(\phi_n) \to \So(\opt)$ in $H^1(\Omega, \R^d)$ we infer that $\J(\opt) = \inf_{\zeta \in \V_m} \J(\zeta)$, which implies that $\opt$ is a solution to \eqref{opt}.
\end{proof}

To derive the first-order optimality conditions, we first take note of the following result concerning the Fr\'echet differentiability of the solution operator $\So$, which can be inferred from \cite[Thm.~3.3]{BGFS}:
\begin{lem}\label{lem:diff}
Let $(\f, \g) \in L^2(\Omega, \R^d) \times L^2(\Gamma_g, \R^d) $ and $\opt \in L^\infty(\Omega)$. Then $\So$ is Fr\'echet differentiable at $\opt$ as a mapping from $L^\infty(\Omega)$ to $\HD$. Moreover,
for every $\zeta \in L^\infty(\Omega)$, $\So'(\opt) \in \mathcal{L}(L^\infty(\Omega); \HD)$ and 
\begin{align*}
\So'(\opt) [\zeta] = \w ,
\end{align*}
where $\w \in \HD$ is the unique solution to the linearized system
\begin{align}\label{lin:weak}
	\int_{\Omega} \C(\opt) \E(\w) : \E(\be) \dx & = - \int_\Omega \C'(\opt)\zeta \E(\uopt) : \E(\be) \dx
	+ \int_\Omega \hh'(\opt) \zeta \f \cdot \be \dx
\end{align}
for all $\be \in \HD,$ with $\uopt = \So(\opt)$.
\end{lem}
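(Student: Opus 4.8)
\textbf{The plan} is the classical three-step argument for control-to-state differentiability: construct the candidate derivative as the solution of the linearized problem, check that it is a bounded linear operator, and verify that the Fr\'echet remainder is of second order by an energy estimate.

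First I would fix $\opt \in L^\infty(\Omega)$ and, for each direction $\zeta \in L^\infty(\Omega)$, solve \eqref{lin:weak} for $\w \in \HD$. The bilinear form $\be \mapsto \int_\Omega \C(\opt)\E(\w):\E(\be)\dx$ is precisely the one underlying \eqref{state:weak}; by the ellipticity bound $\theta|\bA|^2 \le \C\bA:\bA \le \Lambda|\bA|^2$ together with Korn's inequality it is coercive and bounded on $\HD$. Since $\C'(\opt)=\tfrac12 g'(\opt)(\C_0-\C_1)$ and $\hh'(\opt)$ lie in $L^\infty$, and $\uopt:=\So(\opt)\in\HD$, the right-hand side of \eqref{lin:weak} is a bounded linear functional on $\HD$ with norm $\le C\no{\zeta}_{L^\infty}(\no{\uopt}_{H^1}+\no{\f})$. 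Lax--Milgram then gives a unique $\w=:L[\zeta]$, the coercivity estimate yields $\no{\w}_{H^1}\le C\no{\zeta}_{L^\infty}$, and linearity of $\zeta\mapsto(\text{right-hand side})$ makes $L$ linear; hence $L\in\mathcal L(L^\infty(\Omega);\HD)$ is our candidate for $\So'(\opt)$.

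Second, writing $\u_\zeta:=\So(\opt+\zeta)$ and $\rr:=\u_\zeta-\uopt-\w$, I would subtract the weak formulations \eqref{state:weak} for $\opt+\zeta$ and $\opt$, split $\C(\opt+\zeta)\E(\u_\zeta-\uopt)=\C(\opt)\E(\u_\zeta-\uopt)+(\C(\opt+\zeta)-\C(\opt))\E(\u_\zeta-\uopt)$, and subtract \eqref{lin:weak}, arriving at
\[
\int_\Omega \C(\opt)\E(\rr):\E(\be)\dx = -\int_\Omega \big(\C(\opt+\zeta)-\C(\opt)-\C'(\opt)\zeta\big)\E(\uopt):\E(\be)\dx + \int_\Omega \big(\hh(\opt+\zeta)-\hh(\opt)-\hh'(\opt)\zeta\big)\f\cdot\be\dx - \int_\Omega \big(\C(\opt+\zeta)-\C(\opt)\big)\E(\u_\zeta-\uopt):\E(\be)\dx
\]
for all $\be\in\HD$. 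Testing with $\be=\rr$ and using coercivity on the left, I would estimate the three terms on the right, each of which I expect to be quadratic in $\zeta$: the first two are the second-order Taylor remainders of $g$ and $\hh$, bounded pointwise by $C|\zeta|^2$ (for the affine $\hh$ on $[-1,1]$ the remainder even vanishes), so after Cauchy--Schwarz and the a priori bound \eqref{state:bdd} on $\uopt$ they contribute $\le C\no{\zeta}_{L^\infty}^2\no{\rr}_{H^1}$; the cross term is handled by the Lipschitz bound $|\C(\opt+\zeta)-\C(\opt)|\le C\no{\zeta}_{L^\infty}$ (from $g$ Lipschitz) combined with the stability estimate $\no{\u_\zeta-\uopt}_{H^1}\le C\no{\zeta}_{L^\infty}$ furnished by Lemma \ref{lem:state}, giving again $\le C\no{\zeta}_{L^\infty}^2\no{\rr}_{H^1}$. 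Dividing by $\no{\rr}_{H^1}$ yields $\no{\rr}_{H^1}\le C\no{\zeta}_{L^\infty}^2=o(\no{\zeta}_{L^\infty})$, i.e.\ Fr\'echet differentiability with $\So'(\opt)=L$.

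\textbf{The main obstacle} I anticipate is not the algebra but securing the second-order control of the nonlinearities uniformly as $\no{\zeta}_{L^\infty}\to0$: the two Taylor remainders require $g$ to have a locally Lipschitz derivative (and a corresponding bound for $\hh$), with a remainder constant uniform over the fixed bounded range in which $\opt$ and $\opt+\zeta$ take values. For the interpolations considered here ($g(\vp)=\vp$ or \eqref{eq:gphic}) $g$ is smooth and, since $\hh$ is affine on $[-1,1]$, its remainder contributes nothing, so both requirements hold; the decisive analytic input is then simply the Lipschitz stability of $\So$ in Lemma \ref{lem:state}, which upgrades the cross term from first to second order in $\zeta$.
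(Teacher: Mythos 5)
The paper offers no proof of this lemma at all: it is stated as a consequence of \cite[Thm.~3.3]{BGFS}, and your argument (Lax--Milgram for the linearized system to get a bounded linear candidate $L$, then an energy estimate for $\rr=\So(\opt+\zeta)-\So(\opt)-L[\zeta]$ that combines the pointwise second-order Taylor remainders of $g$, $\hh$ with the Lipschitz stability of $\So$ from Lemma \ref{lem:state} to conclude $\no{\rr}_{H^1}\leq C\no{\zeta}_{L^\infty}^2$) is exactly the standard argument by which that cited result is proved, so the proposal is correct and fills in what the paper delegates to the reference. The one caveat, which your closing paragraph essentially identifies, is that the quadratic remainder bounds need $g$ and $\hh$ to be $C^{1,1}$ near the values of $\opt$, whereas the paper's truncated extensions of $g$ and $\hh$ have kinks at $\pm 1$; this restriction (harmless for the directional derivatives within $\V_m$ actually used later) is inherent in the paper's own statement and citation, not a defect of your proof.
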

Then, we introduce the adjoint system for the adjoint variable $\p$ associated to $\opt$, whose structure is similar to the state equation \eqref{state} for $\u$:
\begin{subequations}\label{adjoint}
\begin{alignat}{2}
-\div ( \C(\opt) \E(\p)) &= \beta \hh(\opt) \f \quad && \text{ in } \Omega, \\
\p &= \0 \quad && \text{ on } \Gamma_D, \\
(\C(\opt) \E(\p)) \bm{n} &= \beta \g \quad && \text{ on } \Gamma_g, \\
(\C(\opt) \E(\p)) \bm{n} &= \0 \quad && \text{ on } \Gamma_0.
\end{alignat}
\end{subequations}
Via a similar argument (see also \cite[Thm.~4.3]{BGFS}), the well-posedness of the adjoint system is straightforward:
\begin{lem}\label{lem:adj}
For any $\opt \in L^\infty(\Omega)$ and $(\f, \g) \in L^2(\Omega, \R^d) \times L^2(\Gamma_g, \R^d)$, there exists a unique solution $\p \in H^1_D(\Omega, \R^d)$ to \eqref{adjoint} satisfying
\begin{align}\label{adj:weak}
\int_\Omega \C(\opt) \E(\p) : \E(\bv) \dx = \beta \int_\Omega \hh(\opt) \f \cdot \bv \dx + \beta \int_{\Gamma_g} \g \cdot \bv \dH
\end{align}
for all $\bv \in \HD$. In fact, if $\uopt = \So(\opt) \in H^1_D(\Omega, \R^d)$ is the unique solution to \an{\eqref{state:weak}}, then $\p = \beta \uopt$.
\end{lem}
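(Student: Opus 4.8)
The plan is to exploit that the adjoint system \eqref{adjoint} is, up to the multiplicative factor $\beta$, an exact copy of the state system \eqref{state} evaluated at $\vp=\opt$: its interior source $\beta\hh(\opt)\f$ and Neumann datum $\beta\g$ are obtained from those of \eqref{state} simply by scaling $(\f,\g)\mapsto(\beta\f,\beta\g)$. This structural observation reduces the lemma to two short steps.

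First, for existence and uniqueness I would invoke verbatim the mechanism already underlying Lemma \ref{lem:state}. The weak form \eqref{adj:weak} is governed by the same symmetric bilinear form $(\p,\bv)\mapsto\int_\Omega\C(\opt)\E(\p):\E(\bv)\dx$ on $\HD\times\HD$, whose continuity and $\HD$-coercivity were already used for the state problem (the ellipticity bounds on $\C$, inherited by the convex combination $\C(\opt)$, combined with Korn's inequality on $\HD$). Its right-hand side $\bv\mapsto\beta\int_\Omega\hh(\opt)\f\cdot\bv\dx+\beta\int_{\Gamma_g}\g\cdot\bv\dH$ is a bounded linear functional on $\HD$, since $\hh(\opt)\in L^\infty$, $(\f,\g)\in L^2(\Omega,\R^d)\times L^2(\Gamma_g,\R^d)$, and the trace operator maps $\HD$ continuously into $L^2(\Gamma_g,\R^d)$. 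The Lax--Milgram theorem then delivers a unique $\p\in\HD$ solving \eqref{adj:weak}, exactly as in \cite[Thms.~3.1, 3.2]{BGFS} applied with the scaled data.

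Second, to identify $\p=\beta\uopt$ I would argue by linearity. Taking the state weak formulation \eqref{state:weak} at $\vp=\opt$, $\u=\uopt$ and multiplying throughout by $\beta$, the identity $\E(\beta\uopt)=\beta\E(\uopt)$ yields
\begin{align*}
\int_\Omega\C(\opt)\E(\beta\uopt):\E(\bv)\dx=\beta\int_\Omega\hh(\opt)\f\cdot\bv\dx+\beta\int_{\Gamma_g}\g\cdot\bv\dH\quad\text{for all }\bv\in\HD,
\end{align*}
which is precisely \eqref{adj:weak}. Since $\beta\uopt\in\HD$ and the solution of \eqref{adj:weak} is unique by the first step, I conclude $\p=\beta\uopt$.

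There is no substantive obstacle here: the content is entirely the recognition that the adjoint problem is a $\beta$-scaled instance of the already-solved state problem, so both well-posedness and the explicit formula follow immediately from the theory for \eqref{state}. The only point demanding (minimal) care is verifying that the scaling passes term-by-term through the weak formulation, which is immediate from the linearity of $\E$ and of the integrals in \eqref{state:weak}.
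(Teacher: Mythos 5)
Your proposal is correct and follows essentially the same route as the paper, which likewise disposes of well-posedness by noting that \eqref{adjoint} is the same elliptic system as \eqref{state} with data scaled by $\beta$ (invoking the argument behind Lemma \ref{lem:state}, cf.~\cite[Thm.~4.3]{BGFS}), and then identifies $\p = \beta\uopt$ by linearity of the weak formulation together with uniqueness. No gaps; the Korn/Lax--Milgram mechanism and the $\beta$-scaling observation are exactly the intended content.
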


Our main result is the following first-order optimality conditions for the structural optimization problem with anisotropy \eqref{opt}:
\begin{thm}\label{thm:optcond}
Suppose \eqref{gamma:1}--\eqref{gamma:3} hold. Let $(\f, \g) \in L^2(\Omega, \R^d) \times L^2(\Gamma_g,\R^d)$ and $\opt \in \V_m$ be an optimal design variable with associated state $\uopt = \So(\opt)$.  Then, there exists $\bxi \in \pd A(\nabla \opt)$ almost everywhere in $\Omega$ such that 
\begin{equation}\label{optcond}
\begin{aligned}
&\widehat \alpha \int_\Omega \eps \bxi \cdot \nabla (\phi-\opt) + \frac 1 \eps \Psi'_0(\opt)(\phi-\opt) \dx \\
& \quad + \beta \int_\Omega  2\hh'(\opt)(\phi-\opt)\f \cdot \uopt  -\C'(\opt)(\phi-\opt) \E(\uopt) : \E(\uopt) \dx \geq 0
\end{aligned}
\end{equation}
for all $\phi \in \V_m$. 
\end{thm}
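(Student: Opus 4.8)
The plan is to split off the non-smooth convex part of the cost and treat the rest by differentiable calculus, resolving the non-differentiability of $\gamma$ via the convex subdifferential sum rule. Recalling that $\Psi = \Psi_0$ on $\V_m$ and writing the reduced functional $\J(\vp)=J(\vp,\So(\vp))$, I would decompose $\J = \widehat\alpha\eps\,\F + R$ on $\V_m$, where $\F(\vp)=\int_\Omega A(\nabla\vp)\dx$ is the convex, generally non-smooth functional of Lemma \ref{LEM:A}, and
\[
R(\vp) := \tfrac{\widehat\alpha}{\eps}\int_\Omega\Psi_0(\vp)\dx + \beta\Big(\int_\Omega\hh(\vp)\f\cdot\So(\vp)\dx + \int_{\Gamma_g}\g\cdot\So(\vp)\dH\Big)
\]
gathers the Fr\'echet-differentiable terms (here $\Psi_0$ is smooth, $\hh$ is affine, and $\So$ is differentiable by Lemma \ref{lem:diff}). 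Since $\opt$ minimises $\J$ over the convex set $\V_m$, testing along $\opt+t(\phi-\opt)\in\V_m$ for $\phi\in\V_m$, $t\in(0,1]$, dividing by $t$ and passing to the limit $t\to0^+$ using the monotonicity of the convex difference quotient and the bound $\F'(\opt;\phi-\opt)\le\F(\phi)-\F(\opt)$, I obtain
\[
\widehat\alpha\eps\big(\F(\phi)-\F(\opt)\big) + R'(\opt)[\phi-\opt]\ge 0 \qquad\text{for all }\phi\in\V_m,
\]
which is exactly the subdifferential inclusion $-R'(\opt)\in\pd\big(\widehat\alpha\eps\,\F+\mathcal I_{\V_m}\big)(\opt)$, with $\mathcal I_{\V_m}$ the indicator function of $\V_m$.

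Next I would compute $R'(\opt)[\zeta]$. The potential part contributes $\tfrac{\widehat\alpha}{\eps}\int_\Omega\Psi_0'(\opt)\zeta\dx$, while for the compliance part I set $\w:=\So'(\opt)[\zeta]$, the solution of the linearised system \eqref{lin:weak}. To eliminate $\w$, I test the adjoint identity \eqref{adj:weak} with $\bv=\w$ and the linearised identity \eqref{lin:weak} with $\be=\p$; by the symmetry of $\C(\opt)$ and the relation $\p=\beta\uopt$ from Lemma \ref{lem:adj}, the two $\w$-dependent terms collapse to $-\beta\int_\Omega\C'(\opt)\zeta\,\E(\uopt):\E(\uopt)\dx+\beta\int_\Omega\hh'(\opt)\zeta\,\f\cdot\uopt\dx$. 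Combining with the direct $\hh'$ contribution produces the factor $2$, so that
\[
R'(\opt)[\zeta] = \tfrac{\widehat\alpha}{\eps}\int_\Omega\Psi_0'(\opt)\zeta\dx + \beta\int_\Omega 2\hh'(\opt)\zeta\,\f\cdot\uopt - \C'(\opt)\zeta\,\E(\uopt):\E(\uopt)\dx,
\]
which is precisely the non-gradient part of \eqref{optcond} with $\zeta=\phi-\opt$.

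To turn the abstract inclusion into the claimed pointwise statement, I would invoke the Moreau--Rockafellar sum rule. As $\F$ is finite and convex on all of $H^1(\Omega)$, hence continuous, and $\mathcal I_{\V_m}$ is proper convex lower semicontinuous with $\V_m\neq\emptyset$, the constraint qualification holds and $\pd(\widehat\alpha\eps\,\F+\mathcal I_{\V_m})(\opt)=\widehat\alpha\eps\,\pd\F(\opt)+N_{\V_m}(\opt)$, with $N_{\V_m}(\opt)$ the normal cone. Thus there are $v\in\pd\F(\opt)$ and $n\in N_{\V_m}(\opt)$ with $-R'(\opt)=\widehat\alpha\eps\,v+n$; in particular $\opt\in D(\pd\F)$, so $\pd\F(\opt)=\G(\opt)$ and Lemma \ref{LEM:A} supplies a single $\bxi\in L^2(\Omega,\R^d)$ with $\bxi\in\pd A(\nabla\opt)$ a.e.\ and $\inn{v,p}_{H^1}=(\bxi,\nabla p)$ for all $p\in H^1(\Omega)$. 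Using $\inn{n,\phi-\opt}_{H^1}\le 0$ for all $\phi\in\V_m$, substituting $\inn{v,\phi-\opt}_{H^1}=\int_\Omega\bxi\cdot\nabla(\phi-\opt)\dx$, and inserting the formula for $R'(\opt)$ then yields \eqref{optcond}.

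The main obstacle is producing a \emph{single} subgradient $\bxi\in\pd A(\nabla\opt)$ valid simultaneously for every admissible $\phi$: a direct directional-derivative argument would only furnish the support-function identity $\max_{v\in\pd\F(\opt)}\inn{v,\phi-\opt}_{H^1}$, whose maximiser depends on $\phi$. It is the combination of the convex sum rule, which selects one element of $\pd\F(\opt)$ through the decomposition against the normal cone of $\V_m$, with the representation of Lemma \ref{LEM:A} that overcomes this; verifying the constraint qualification, namely the continuity of $\F$ on $H^1(\Omega)$, is the key technical point that makes the argument go through.
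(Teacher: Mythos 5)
Your proposal is correct and follows essentially the same route as the paper's proof: the same splitting $\J = \widehat\alpha\eps\,\F + \K$ (your $R$ is the paper's $\K$ from \eqref{K:defn}), the same adjoint-based simplification yielding \eqref{opt:ineq1}, the same convexity argument giving \eqref{opt:ineq2}, the same subdifferential sum rule to split off the normal cone of $\V_m$, and the same invocation of Lemma \ref{LEM:A} to extract a single $\bxi \in \pd A(\nabla\opt)$. Your explicit verification of the constraint qualification (continuity of $\F$ on $H^1(\Omega)$, which indeed follows from the local Lipschitz estimate in \eqref{prop:A}) is a point the paper leaves implicit in its citation of \cite[Cor.~2.63]{Barbu2}, but it does not change the structure of the argument.
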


\begin{proof}
Recalling the definition of the functional $\F$ in \eqref{def:F}, we introduce
\begin{align}\label{K:defn}
\K(\phi) := \int_\Omega  \frac{\widehat \alpha}{\eps} \Psi_0(\phi) + \beta \hh(\phi) \f \cdot \So(\phi) \dx + \beta \int_{\Gamma_g} \g \cdot \So(\phi) \dH,
\end{align}
so that the reduced functional $\J$ can be expressed as $\J(\phi) = \an{\widehat\alpha }\eps \F(\phi) + \K(\phi)$. Using the differentiability of $\So$ we infer that $\K$ is also Fr\'echet differentiable with derivative at $\opt \in \V_m$ in direction $\zeta \in \V = \{ f \in H^1(\Omega) \, : \, f(\an{\x}) \in [-1,1] \text{ a.e.~in } \Omega\}$ given as
\begin{align}\label{opt:ineq0}
\K'(\opt)[\zeta] = \int_\Omega \frac{\widehat \alpha}{\eps} \Psi_0'(\opt)\zeta + \beta \hh'(\opt) \zeta \f \cdot \uopt + \beta \hh(\opt) \f \cdot \w \dx + \beta \int_{\Gamma_g} \g \cdot \w \dH,
\end{align}
where $\w = \So'(\opt)[\zeta]$ is unique solution to the linearized system \eqref{lin:weak} and $\uopt = \So(\opt)$. We can simplify this using the adjoint system by testing \eqref{adj:weak} with $\bv = \w$ and testing \eqref{lin:weak} with $\be = \p$ to obtain the identity
\[
\beta \int_\Omega \hh(\opt) \f \cdot \w \dx + \beta \int_{\Gamma_g} \g \cdot \w \dH = \int_\Omega \hh'(\opt) \zeta \f \cdot \p - \C'(\opt) \zeta \E(\uopt): \E(\p) \dx,
\]
so that, together with the relation $\p = \beta \uopt$, \eqref{opt:ineq0} becomes
\begin{align}\label{opt:ineq1}
\K'(\opt)[\zeta] = \int_\Omega \frac{\widehat \alpha}{\eps}\Psi_0'(\opt) \zeta + 2 \beta \hh'(\opt) \zeta \f \cdot \uopt - \beta \C'(\opt) \zeta \E(\uopt): \E(\uopt) \dx.
\end{align}
On the other hand, the convexity of $\F$ and the optimality of $\opt \in \V_m$ imply that
\begin{align*}
0 & \leq \J(\opt + t(\phi - \opt)) - \J(\opt) \\
& = \widehat \alpha \eps \F((1-t)\opt  + t\phi) - \widehat \alpha \eps \F(\opt) + \K(\opt + t(\phi - \opt)) - \K(\opt) \\
& \leq t \widehat \alpha \eps (\F(\phi ) - \F(\opt)) + \K(\opt + t(\phi - \opt)) - \K(\opt)
\end{align*}
holds for all $t \in [0,1]$ and arbitrary $\phi \in \V_m$. Dividing by $t$ and passing to the limit $t \to 0$ yields
\begin{align}\label{opt:ineq2}
0 \leq \widehat \alpha \eps \F(\phi) - \widehat \alpha \eps \F(\opt) + \K'(\opt)[\phi - \opt] \quad \forall \phi \in \V_m.
\end{align}
The above inequality allows us to interpret $\opt \in \V_m$ as a solution to the convex minimization problem
\begin{align}\label{opt:altP}
\min_{\zeta \in H^1(\Omega)} \Big ( \widehat \alpha \eps \F(\zeta) + \K'(\opt)[\zeta] + \I_{\V_m}(\zeta) \Big ) \quad \text{ where } \I_{\V_m}(\zeta) = \begin{cases} 0 & \text{ if } \zeta \in \V_m, \\
+\infty & \text{ otherwise},
\end{cases}
\end{align}
denotes the indicator function of the set $\V_m$. Using the well-known sum rule for subdifferentials of convex functions, see, e.g., \cite[Cor.~2.63]{Barbu2}, the inequality \eqref{opt:ineq2} can be interpreted as 
\[
0 \in \pd \big ( \widehat \alpha \eps \F + \K'(\opt) + \I_{\V} \big )(\opt) = \widehat \alpha \eps \pd \F(\opt) + \{\K'(\opt)\} + \pd \I_{\V_m}(\opt),
\]
where $\pd$ denotes the subdifferential mapping in $H^1(\Omega)$. This implies the existence of elements $v \in \pd \F(\opt)$ and $\psi \in \pd \I_{\V_m}(\opt)$ such that 
\begin{align}\label{opt:subdiff}
0 = \widehat \alpha \eps v + \K'(\opt) + \psi.
\end{align}
For arbitrary $\phi \in \V_m$, by definition of $\pd \I_{\V_m}(\opt)$ we have
\[
\I_{\V_m}(\phi) - \I_{\V_m}(\opt) = 0 \geq \inn{\psi,\phi - \opt}_{H^1} \quad \implies \quad \inn{\widehat \alpha \eps v, \phi - \opt}_{H^1} +\K'(\opt)[\phi - \opt] \geq 0.
\]
Then, using Lemma \ref{LEM:A}, there exists $\bxi \in L^2(\Omega, \R^d)$ with $\bxi \in \pd A(\nabla \opt)$ almost everywhere in $\Omega$ and 
\[
(\widehat \alpha \eps \bxi, \nabla (\phi - \opt)) + \K'(\opt)[\phi - \opt] \geq 0 \quad \forall \phi \in \V_m.
\]
The optimality condition \eqref{optcond} is then a consequence of the above and \eqref{opt:ineq1} with $\zeta = \phi - \opt$.
\end{proof}

\begin{remark}
An alternate formulation of the optimality condition \eqref{optcond} is as follows: There exist $\bxi \in \pd A(\nabla \opt)$, $\theta \in \pd \I_{[-1,1]}(\opt)$, and a Lagrange multiplier $\mu \in \R$ for the mass constraint such that
\begin{equation}\label{optcond:2}
\begin{aligned}
&\widehat \alpha \int_\Omega \eps \bxi \cdot \nabla \zeta + \frac{1}{\eps} \Psi'_0(\opt) \zeta + \theta \zeta \dx + \mu \int_\Omega \zeta \dx \\
& \quad + \beta \int_\Omega  2\hh'(\opt)\zeta \f \cdot \uopt  -\C'(\opt) \zeta \E(\uopt) : \E(\uopt) \dx = 0
\end{aligned}
\end{equation}
for all $\zeta \in H^1(\Omega)$. In the above, the subdifferential of the indicator function $\I_{[-1,1]}$ has the explicit characterization
\[
\pd \I_{[-1,1]}(s) = \begin{cases}
[0,\infty) & \text{ if } s = 1, \\
0 & \text{ if } |s| < 1, \\
(-\infty,0] & \text{ if } s = -1.
\end{cases}
\]
Indeed, instead of \eqref{opt:altP} we can interpret $\opt \in \V_m$ as a solution to the minimization problem
\[
\min_{ \zeta \in H^1(\Omega)} \Big ( \widehat \alpha \eps \F(\zeta) + \K'(\opt)[\zeta] + \I_{[-1,1]}(\zeta) + \I_m(\zeta) \Big ), \quad \I_m(\zeta) = \begin{cases} 0 & \text{ if } \int_\Omega \zeta \dx = m |\Omega|, \\
+\infty & \text{ otherwise},
\end{cases}
\]
which then yields the existence of elements $\theta \in \pd \I_{[-1,1]}(\opt)$ and $\eta \in \pd \I_{m}(\opt)$ such that
\[
0 = \widehat \alpha \eps v + \K'(\opt) + \theta + \eta.
\]
For arbitrary $\zeta \in H^1(\Omega)$, we test the above equality with $\zeta - (\zeta)_\Omega$, where $(\zeta)_\Omega = \frac{1}{|\Omega|} \int_\Omega \zeta \dx$, and on noting that
\[
0 = \an{\I_{m}}(\opt \pm (\zeta - (\zeta)_\Omega)) - \an{\I_{m}}(\opt) \geq \inn{\eta, \pm (\zeta - (\zeta)_\Omega)}_{H^1}  \quad  \implies \quad \inn{\eta, \zeta - (\zeta)_\Omega}_{H^1} = 0,
\]
this yields \eqref{optcond:2} with Lagrange multiplier $\mu := - |\Omega|^{-1}  (\K'(\opt)[1] + \theta)$.\end{remark}

\section{Sharp interface asymptotics}\label{sec:SI}
Our interest is to study the behavior of solutions under the sharp interface limit $\eps \to 0$, which connects our phase field approach with the shape optimization approach of \cite{AllaireDEFM17}.

\subsection{$\Gamma$-convergence of the anisotropic Ginzburg--Landau functional}
We begin with the $\Gamma$-convergence of the extended anisotropic Ginzburg--Landau functional \eqref{E:gam:eps} to the extended anisotropic perimeter functional \eqref{E:gam:0}, which is formulated as follows:
\begin{lem}\label{lem:Gamma}
Let $\Omega \subset \R^d$ be an open bounded domain with Lipschitz boundary. Let $\gamma : \R^d \to \R$ satisfy \eqref{gamma:1}--\eqref{gamma:3}, $\Psi: \R \to \R_{\geq 0}$ is a double well potential with minima at $\pm 1$ and define $c_{\Psi} = \int_{-1}^1 \sqrt{2 \Psi(s)} ds$. Then, for any $\eps > 0$
\begin{enumerate}
\item[$(i)$] If $\{u_\eps\}_{\eps>0} \subset \V_m$ is a sequence such that $\liminf_{\eps \to 0} \E_{\gamma,\eps}(u_\eps) < \infty$ and $u_\eps \to u_0$ strongly in $L^1(\Omega)$, then $u_0 \in \BV_m(\Omega, \{-1,1\})$ with $\E_{\gamma,0}(u_0) \leq \liminf_{\eps \to 0} \E_{\gamma,\eps}(u_\eps)$.
\item[$(ii)$] Let $u_0 \in \BV(\Omega, \{-1,1\})$. Then, there exists a sequence $\{u_\eps\}_{\eps > 0}$ of Lipschitz continuous functions on $\Omega$ such that $u_\eps(\x) \in [-1,1]$ a.e.~in $\Omega$, $u_\eps \to u_0$ strongly in $L^1(\Omega)$, $\int_\Omega u_\eps(\x) \dx = \int_\Omega u_0(\x) \dx$ for all $\eps > 0$, and $\limsup_{\eps \to 0} \E_{\gamma,\eps}(u_\eps) \leq \E_{\gamma,0}(u_0)$.
\item[$(iii)$] Let $\{u_\eps\}_{\eps > 0} \subset \V_m$ be a sequence satisfying $\sup_{\eps > 0} \E_{\gamma,\eps}(u_\eps) < \infty$. Then, there exists a non-relabeled subsequence $\eps \to 0$ and a limit function $u$ such that $u_\eps \to u$ strongly in $L^1(\Omega)$ with $\E_{\gamma,0}(u)< \infty$.
\end{enumerate}
\end{lem}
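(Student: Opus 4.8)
The plan is to prove the three assertions as the classical trilogy underlying a $\Gamma$-convergence statement: the liminf/lower bound $(i)$, the recovery sequence/upper bound $(ii)$, and compactness $(iii)$. The engine throughout is the Modica--Mortola equipartition reduction, made compatible with the anisotropy by exploiting the one-homogeneity \eqref{gamma:1}. Concretely, I would introduce the auxiliary function $\Phi(s):=\int_{-1}^{s}\sqrt{2\Psi(t)}\,dt$, which is Lipschitz and strictly increasing on $[-1,1]$ with $\Phi(-1)=0$ and $\Phi(1)=c_{\Psi}$, and record the pointwise inequality, valid because $\gamma\ge0$ and $\sqrt{2\Psi(u)}\ge0$,
\begin{align*}
\frac{\eps}{2}\,|\gamma(\nabla u)|^2+\frac{1}{\eps}\Psi(u)\;\ge\;\sqrt{2\Psi(u)}\,\gamma(\nabla u)=\gamma\big(\sqrt{2\Psi(u)}\,\nabla u\big)=\gamma(\nabla\Phi(u)),
\end{align*}
so that $\E_{\gamma,\eps}(u)\ge\int_\Omega\gamma(\nabla\Phi(u))\dx$, with asymptotic equality exactly when the diffuse layer is equipartitioned.

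For $(i)$, along a subsequence realizing the finite liminf the bound $\int_\Omega\Psi(u_\eps)\dx\le\eps\,\E_{\gamma,\eps}(u_\eps)\to0$ combined with Fatou forces $\Psi(u_0)=0$ a.e., hence $u_0\in\{-1,1\}$ a.e.; the coercivity \eqref{gamma:4} turns the displayed bound into a uniform $\BV$ bound for $\Phi(u_\eps)$, giving $u_0\in\BV(\Omega,\{-1,1\})$, while $L^1$-convergence preserves the mean and yields $u_0\in\BV_m(\Omega,\{-1,1\})$. Since $\Phi$ is Lipschitz, $\Phi(u_\eps)\to\Phi(u_0)$ in $L^1$, and Reshetnyak lower semicontinuity of $v\mapsto\int_\Omega\gamma(\D v)$ (legitimate as $\gamma$ is convex, continuous and one-homogeneous) gives $\liminf_\eps\int_\Omega\gamma(\nabla\Phi(u_\eps))\dx\ge\int_\Omega\gamma(\D\Phi(u_0))$. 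Because $\Phi(u_0)=c_{\Psi}\,\chi_{\{u_0=1\}}$, the right-hand side equals $c_{\Psi}\P_\gamma(\{u_0=1\})=\E_{\gamma,0}(u_0)$, proving $(i)$. Assertion $(iii)$ is the same machinery run without a prescribed limit: \eqref{gamma:4} and the displayed inequality bound $\{\Phi(u_\eps)\}$ in $\BV(\Omega)\cap L^\infty$, so a subsequence converges in $L^1$ by Rellich's theorem; applying $\Phi^{-1}$ (a homeomorphism $[0,c_{\Psi}]\to[-1,1]$) returns $u_\eps\to u$ in $L^1$, and the liminf argument certifies $\E_{\gamma,0}(u)<\infty$.

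The genuinely delicate part is the recovery sequence $(ii)$. I would first reduce, by a diagonal argument, to sets $A=\{u_0=1\}$ with smooth (or polyhedral) boundary: finite-perimeter sets can be approximated in $L^1$ with convergence of perimeters by De Giorgi's theorem, and Reshetnyak's continuity theorem then upgrades this to $\P_\gamma(A_k)\to\P_\gamma(A)$ since $\gamma$ is continuous and $\tilde c\,|\cdot|\le\gamma(\cdot)\le C\,|\cdot|$. For a smooth $A$ I would build $u_\eps$ from the optimal one-dimensional transition profile laid down along the signed distance to $\pd A$, crucially letting the profile depend on the local unit normal $\bnu$ so that the first integral of the profile ODE realizes the direction-dependent surface energy density $c_{\Psi}\,\gamma(\bnu)$ rather than a fixed value; integrating across the layer and using the tubular-neighbourhood/coarea structure of $\pd A$ yields $\limsup_\eps\E_{\gamma,\eps}(u_\eps)\le c_{\Psi}\int_{\pd A}\gamma(\bnu)\dH=\E_{\gamma,0}(u_0)$. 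Such $u_\eps$ are Lipschitz and $[-1,1]$-valued by construction. Finally I would restore the exact mass constraint $\int_\Omega u_\eps\dx=\int_\Omega u_0\dx$ by a vanishing perturbation (a small shift-and-truncate, or a modification on a shrinking ball away from $\pd A$) chosen so that the induced change in energy is $o(1)$ and $L^1$-convergence is unaffected.

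I expect the main obstacle to be precisely this profile construction in $(ii)$: because the anisotropies of interest (e.g.\ \eqref{D:gam}) are only convex and continuous, not $C^1$, one cannot rely on an anisotropic signed-distance function with the convenient eikonal identity $\gamma(\nabla\delta)=1$, and must instead control the $\bnu$-dependence of the transition profile and the patching errors along the curved interface directly, estimating the tangential gradient contributions that a plane-wave ansatz ignores. The lower bound $(i)$ and compactness $(iii)$ are comparatively routine once the $\Phi$-reduction and Reshetnyak semicontinuity are in place; the only care needed there is that the obstacle potential \eqref{obs} renders $\Psi$ extended-valued, but since every competitor in $\V_m$ is $[-1,1]$-valued this causes no difficulty.
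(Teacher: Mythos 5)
Your proposal is correct, but it follows a genuinely different route from the paper. The paper does not prove the lemma directly: it reduces everything to the known $\Gamma$-convergence theorem of Bellettini--Braides--Riey for anisotropic functionals on partitions (their case $N=2$), by encoding $u$ as the Gibbs-simplex pair $\bm{w}=(\tfrac12(1-u),\tfrac12(1+u))$, choosing a two-well potential $W$ with $W(\bm{w})=\Psi_0(u)$ and the integrand $f(\bm{w},\bm{X})=2|\gamma(w_1\bm{X}_2-w_2\bm{X}_1)|^2$, so that $f(\bm{w},\nabla\bm{w})=\tfrac12|\gamma(\nabla u)|^2$; after verifying the hypotheses of that theorem, the only substantive computation is the identification of the cell-formula density $\varphi(\bm{\alpha}^1,\bm{\alpha}^2,\bnu)=c_\Psi\gamma(\bnu)$, carried out by evaluating the functional on the one-dimensional optimal profile $\psi(\x\cdot\bnu/\gamma(\bnu))$ and using equipartition of energy; assertions $(i)$--$(iii)$, including the Lipschitz, $[-1,1]$-valued, mass-preserving recovery sequences, are then quoted from the cited theorems. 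Your argument is instead the direct, self-contained proof in the spirit of the anisotropic Modica--Mortola literature (Bouchitt\'e, Owen--Sternberg, Barroso--Fonseca, which the paper cites alongside Bellettini et al.): the $\Phi$-reduction combined with Reshetnyak lower semicontinuity for $(i)$, BV compactness plus the homeomorphism $\Phi^{-1}$ for $(iii)$, and for $(ii)$ approximation by smooth sets (upgraded by Reshetnyak continuity), a normal-dependent profile ansatz, a diagonal argument and a mass-fixing perturbation. The trade-off: the paper's proof is short and delegates the genuinely delicate recovery construction (exact mass constraint included) to a black box, at the cost of the translation into the partition formalism; yours is elementary and transparent, at the cost of having to execute part $(ii)$ by hand for a $\gamma$ that is only Lipschitz, which you correctly single out as the crux (the normal-dependent scaling $\psi\big(d/(\eps\,\gamma(\bnu\circ\pi))\big)$ does work, since $\gamma(\bnu\circ\pi)$ is Lipschitz and bounded away from zero, so the tangential error terms are $O(1)$ pointwise on an $O(\eps)$-thick layer, hence $O(\eps)$ in total).

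One caveat on your mass correction in $(ii)$: of the two devices you mention, only the shift works as described. Perturbing $u_\eps$ on a shrinking ball away from the interface changes the mass by $\delta$ only at a potential-energy cost of order $\delta/\eps$ (since $\Psi_0(1-h)\approx h$ for small $h$), which is $O(1)$, not $o(1)$, when $\delta\sim\eps$. A localized correction must instead nucleate a fully formed droplet of the opposite phase of volume $\sim\delta$, whose diffuse-interface cost is $O(\delta^{(d-1)/d})=o(1)$, with its radius tuned by the intermediate value theorem; the rigid shift of the signed distance, with shift size fixed by the mass defect and exactness again via the intermediate value theorem, remains the cleanest choice.
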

The first and second assertions are known as the liminf and limsup inequalities, respectively, while the third assertion is the compactness property. In the following, we outline how to adapt the proofs of \cite[Thms.~3.1 and 3.4] {Bellettini} for the $\Gamma$-convergence result in the multi-phase case. Our present setting corresponds to the case $N = 2$ in their notation.

\begin{proof}
For arbitrary $u \in \V_m$, we introduce the associated vector $\bm{w} = (w_1, w_2)$ where $w_1 := \frac{1}{2}(1-u)$ and $w_2 := \frac{1}{2}(1+u)$.  Then, $\bm{w}$ take values in the Gibbs simplex $\Sigma = \{ (w_1, w_2) \in \R^2 \, : \, w_1, w_2 \geq 0, \, w_1 + w_2 = 1\}$.  Setting $\bm{\alpha}^1 := (0,1)^{\top}$ and $\bm{\alpha}^2:= (1,0)^{\top}$, we consider a multiple-well potential $W: \Sigma \to [0,\infty)$ satisfying $W^{-1}(0) = \{\bm{\alpha}^1, \bm{\alpha}^2\}$ and 
\begin{align}\label{Gam:W}
W\an{\Big(}\frac{1}{2}(1-u), \frac{1}{2}(1+u)\an{\Big)} = \Psi_0(u),
\end{align}
 where the latter relation connects $W$ to $\Psi_0$ defined in \eqref{obs}.  Denoting by $M^{2 \times d}$ the set of 2-by-$d$ matrices, we consider a function $f: \Sigma \times M^{2 \times d} \to [0,\infty)$ defined as $f(\bm{w}, \bm{X}) = 2|\gamma(w_1 \bm{X}_2 - w_2 \bm{X}_1) |^2$, where $\gamma$ satisfies \eqref{gamma:1}--\eqref{gamma:3}, and $\bm{X}_i$ denotes the \an{$i$-th} row of $\bm{X}$. Taking $\bm{X} = \nabla \bm{w}$ for $\bm{w} \in \Sigma$ yields that $\bm{X}_1 + \bm{X}_2 = \nabla w_1 + \nabla w_2 = 0$ and 
\begin{align}\label{Gam:f}
f(\bm{w}, \nabla \bm{w}) =2 |\gamma((1-w_2) \nabla w_2 + w_2 \nabla w_2)|^2 = \frac{1}{2} |\gamma(\nabla u)|^2
\end{align}
by the relation $\nabla w_2 = \frac{1}{2} \nabla u$ and the one-homogeneity of $\gamma$. Assumptions \eqref{gamma:1}--\eqref{gamma:3} on $\gamma$ ensures the function $f$ defined above fulfills the corresponding assumptions in \cite[p.~80]{Bellettini}, and thus by \cite[Thm.~3.1]{Bellettini},  the extended functional 
\begin{align*}
G_\eps(\bm{w}) = \begin{cases}
\displaystyle \int_\Omega \eps f(\bm{w}, \nabla \bm{w}) + \frac{1}{\eps} W(\bm{w}) \dx & \text{ if } \bm{w} \in H^1(\Omega; \Sigma), \\[10pt]
+\infty & \text{ elsewhere in } L^1(\Omega; \Sigma)
\end{cases}
\end{align*}
$\Gamma$-converges in $L^1(\Omega; \Sigma)$ to a limit functional 
\begin{align*}
\mathcal{G}(\bm{w}) = \begin{cases}
\displaystyle \int_{S_{\bm{w}}} \vp( \bm{\alpha}^1, \bm{\alpha}^2, \bnu) \, \mathrm{d} \Haus & \text{ if } \bm{w} \in \BV(\Omega; \{ \bm{\alpha}^1, \bm{\alpha}^2\}), \\[10pt]
+\infty & \text{ otherwise},
\end{cases}
\end{align*}
where the definitions of the set $S_{\bm{w}}$, normal $\bnu$ and function $\vp$ can be found in \cite[p.~78, p.~81]{Bellettini}. It is clear that $\E_{\gamma,\eps}(u) = G_\eps((\frac{1}{2}(1-u),\frac{1}{2}(1+u)))$ for $u \in \V_m$, and our aim is to show $\E_{\gamma,0}(u) = \mathcal{G}((\frac{1}{2}(1-u),\frac{1}{2}(1+u)))$ for $u \in \BV(\Omega, \{-1,1\})$.  For a fixed vector $\bnu \in \mathbb{S}^{d-1}$, let $\mathcal{C}_\lambda$ denote the $(d-1)$-dimensional cube of side $\lambda$ lying in the orthogonal complement $\bnu^{\perp}$ centered at the origin.  Following \cite{Bellettini}, a function $\bm{w}: \R^d \to \Sigma$ is $\mathcal{C}_\lambda$-periodic if $\bm{w}(\x + \lambda \bm{e}_i) = \bm{w}(\x)$ for every $\bm{x}$ and every $i = 1, \dots, d-1$, with $\bm{e}_1, \dots, \bm{e}_{d-1}$ as directions of the sides of $\mathcal{C}_\lambda$.  Setting $Q_{\lambda,\infty}^{\bnu} = \{ \bm{z} + t \bm{\nu} \, : \, \bm{z} \in \mathcal{C}_\lambda, \, t \in \R\}$ we denote by $\mathcal{X}(Q_{\lambda,\infty}^{\bnu})$ the class of all functions $\bm{w} \in W^{1,2}_{\mathrm{loc}}(\R^d; \Sigma)$ which are $\mathcal{C}_\lambda$-periodic and satisfy $\lim_{\inn{\x, \bnu} \to - \infty} \bm{w}(\x) = \bm{\alpha}^1$ and $\lim_{\inn{\x, \bnu} \to + \infty} \bm{w}(\x) = \bm{\alpha}^2$. Then, by \cite[(6)]{Bellettini}, the integrand in the $\Gamma$-limit functional $\mathcal{G}$ has the representation formula
\[
\varphi(\bm{\alpha}^1, \bm{\alpha}^2, \bnu) = \lim_{\lambda \to +\infty} \inf \Big \{ \lambda^{1-d} G_1(\bm{w}, Q_{\lambda,\infty}^{\bnu}) \, : \, \bm{w} \in \mathcal{X}(Q_{\lambda,\infty}^{\bnu}) \Big \},
\]
where $G_1(\bm{w},A) = \int_A f(\bm{w}, \nabla \bm{w}) + W(\bm{w}) \dx$. To simplify the above expression, for a fixed $\bnu \in \mathbb{S}^{d-1}$, consider a function $u \in \V_m$ of the form $u(\x) = \psi(\x \cdot \bnu /\gamma(\bnu))$ for a monotone function $\an{\psi} : \R \to [-1,1]$ such that $\lim_{s \to \pm \infty} \psi(s) = \pm 1$ and satisfies $\psi''(s) = \Psi_0'(\psi(s))$. For the choice $\Psi_0(s) = \frac{1}{2}(1-s^2)$ we have the explicit solution (known also as the optimal profile)
\begin{align}\label{Obs:prof}
\psi(s) = \begin{cases}
1 & \text{ if } s \geq \frac{\pi}{2}, \\
\sin(s) & \text{ if } |s| \leq \frac{\pi}{2}, \\
-1 & \text{ if } s \leq - \frac{\pi}{2}.
\end{cases}
\end{align}
Furthermore, multiplying the equality $\psi''(s) = \Psi'_0(\psi(s))$ by $\psi'(s)$ and integrating yields the so-called equipartition of energy $\frac{1}{2}|\psi'(s)|^2 = \Psi_0(\psi(s))$ for all $s \in \R$.  Then, it is clear that $\bm{w} =  (\frac{1}{2}(1-u), \frac{1}{2}(1+u)) \in \mathcal{X}(Q_{\lambda,\infty}^{\bnu})$.  By \eqref{gamma:1}, \eqref{Gam:W} and \eqref{Gam:f}, as well as Fubini's theorem, for $u(\x) = \an{\psi}(\x \cdot \bnu / \gamma(\bnu))$ we have $\nabla u(\x) = \an{\psi}'(\x \cdot \bnu/ \gamma(\bnu)) \bnu/ \gamma(\bnu)$, and
\begin{align*}
G_1(\bm{w},Q_{\lambda,\infty}^{\bnu}) & =  \int_{\mathcal{C}_{\lambda}} \int_{\R} \frac{1}{2}|\gamma(\nabla u(\bm{z} + t \bnu))|^2 + \Psi_0(u(\bm{z} + t \bnu)) \dt \dbz  \\
& = \int_{\mathcal{C}_\lambda} \int_{\R} \frac{1}{2} |\psi'(t/\gamma(\bnu))|^2 + \Psi_0(\psi(t/\gamma(\bnu))) \dt \dbz \\
&  = |\mathcal{C}_\lambda| \gamma(\bnu) \int_{-1}^1 \frac{1}{2}|\psi'(s)|^2 + \Psi_0(\psi(s)) \, \mathrm{d}s = \lambda^{d-1} \gamma(\bnu) \int_{-1}^1 \sqrt{2 \Psi_0(r))} \dr
\end{align*}
after a change of variables $r = \psi(s)$ and using the equipartition of energy $\frac{1}{2}|\psi'(s)|^2 = \Psi_0(\psi(s))$.  Hence, we obtain the identification $\varphi(\bm{\alpha}^1, \bm{\alpha}^2, \bnu) = c_{\Psi} \gamma(\bnu)$ with constant $c_{\Psi} = \int_{-1}^1 \sqrt{2 \Psi_0(r)} \dr = \pi$. Then, for $\bm{w} = (w_1, w_2) \in \BV(\Omega, \{ \bm{\alpha}^1, \bm{\alpha}^2 \})$, we define $u \in \BV(\Omega, \{-1,1\})$ via the relation $u = w_2 - w_1$, and this allows us to identify $S_{\bm{w}} = \pd^* \{u = 1\}$ and consequently $\mathcal{G}(\bm{w}) = \E_{\gamma,0}(u)$. Then, the liminf, limsup and compactness properties follow directly from \cite[Thms.~3.1 and 3.4]{Bellettini}.
\end{proof}

\subsection{Formally matched asymptotic analysis}\label{sec:formal}
In this section we consider an anisotropy function $\gamma \in C^2(\R^d)\cap W^{1,\infty}(\R^d)$ satisfying \eqref{gamma:1}--\eqref{gamma:3} in the structural optimization problem \eqref{opt}, as well as a more regular body force $\f \in H^1(\Omega, \R^d)$.  The differentiability of $\gamma$ implies the characterization of the subdifferential $\pd A(\nabla \vp)$ as the singleton set $\{ [\gamma \D \gamma](\nabla \vp) \}$, where
\[
\D\gamma(\q) = (\pd_{q_1} \gamma, \pd_{q_2} \gamma, \dots, \pd_{q_d} \gamma)(\q) \in \R^d, \quad  \q = (q_1, \dots, q_d) \in \R^d.
\]
Then, the corresponding optimality condition for a minimizer $(\opt_\eps, \uopt_\eps)$ to \eqref{opt} becomes
\begin{equation}\label{opt:alt}
\begin{aligned}
& \widehat \alpha \int_\Omega \eps [\gamma \D \gamma](\nabla \opt_\eps) \cdot \nabla ( \phi - \opt_\eps) + \frac{1}{\eps} \Psi_0'(\opt_\eps) (\phi - \opt_\eps) \dx \\
& \quad + \beta \int_\Omega [2 \hh'(\opt_\eps) \f \cdot \uopt_\eps - \C'(\opt_\eps) \E(\uopt_\eps) : \E(\uopt_\eps) ] (\phi - \opt_\eps) \dx \geq 0 \quad \forall \phi \in \an{\V_m},
\end{aligned}
\end{equation}
and its strong formulation reads as (see \eqref{optcond:2})
\begin{subequations}\label{opt:Strong}
\begin{alignat}{2}
\notag \beta \Big ( 2 \hh'(\opt_\eps) \f \cdot \uopt_\eps - \C'(\opt_\eps) \E(\uopt_\eps) : \E(\uopt_\eps) \Big ) + \mu_\eps & \\
\quad + \, \widehat \alpha \Big ( \frac{1}{\eps} \Psi_0'(\opt_\eps) - \eps \div \big( [\gamma \D \gamma](\nabla \opt_\eps)  \big) \Big )  & = 0  \quad \text{ in } \Omega \cap \{|\opt_\eps|<1\}, \label{opt1} \\
[\gamma \D \gamma](\nabla \opt_\eps) \cdot \bm{n} & = 0 \quad \text{ on } \pd \Omega, \label{opt2}
\end{alignat}
\end{subequations}
with Lagrange multiplier $\mu_\eps$ for the mass constraint.  Our aim is to perform a formally matched asymptotic analysis as $\eps \to 0$, similar as in \cite[Sec.~5]{BlankGSS12}, in order to infer the sharp interface limit of \eqref{opt:Strong}. The method proceeds as follows, see \cite{Cahn}: formally we assume that the domain $\Omega$ admits a decomposition $\Omega = \Omega_\eps^+ \cup \Omega_\eps^- \cup \Omega_\eps^{\rm{I}}$, where $\Omega_\eps^{\rm{I}}$ is an annular domain and $\opt_\eps \in \V_m$ satisfies
\begin{align}\label{SI:ass}
\opt_\eps = \pm 1 \text{ in } \Omega_\eps^{\pm}, \quad |\opt_\eps| < 1 \text{ in } \Omega_\eps^{\rm{I}}.
\end{align}
The zero level set $\Lambda_\eps := \{ \x \in \Omega \, : \, \opt_\eps(\x) = 0\}$ is assumed to converge to a smooth hypersurface $\Lambda$ as $\eps \to 0$, and that $(\opt_\eps, \uopt_\eps)$ admit an {\it outer expansion} in regions in $\Omega_\eps^{\pm}$ as well as an {\it inner expansion} in regions in $\Omega_\eps^{\rm{I}}$. We substitute these expansions (in powers of $\eps$) in \eqref{state} and \eqref{opt:Strong}, collecting terms of the same order of $\eps$, and with the help of suitable {\it matching conditions} connecting these two expansions, we deduce an equation posed on $\Lambda$. For an introduction and more detailed discussion of this methodology we refer to \cite{Fife,GStin}.

To start, let us collect some useful relations.  As $\gamma$ is positively homogeneous of degree one, taking the relation $\gamma(t \p) = t \gamma(\p)$ for $t> 0$ and $\p \in \R^d \setminus \{\0\}$ and differentiating with respect to $t$, and also with respect to $\p$ leads to 
\begin{align}\label{Dgam}
\D \gamma(t \p) \cdot \p = \gamma(\p), \quad \D \gamma(t \p) = \D \gamma(\p),
\end{align}
with the latter relation showing that $\D \gamma$ is positively homogeneous of degree zero.  Then, it is easy to see $\gamma \D \gamma$ is positively homogeneous of degree one. Next, from \eqref{defn:A}, we see that $A$ is positively homogeneous of degree two. Taking the relation $A(t \p) = t^2 A(\p)$ and differentiating with respect to $t$ twice leads to 
\begin{align}\label{gam:Hess}
\D^2 A(t \p) \p \cdot \p = 2 A(\p) \quad \implies \quad [\D(\gamma \D\gamma)(\p)] \p \cdot \p = |\gamma(\p)|^2,
\end{align}
where $\D(\gamma \D \gamma) = \gamma \D^2 \gamma + \D \gamma \otimes \D \gamma$ with $\D^2 \gamma$ denoting the Hessian matrix of $\gamma$. Lastly, differentiating the relation $[\gamma \D \gamma](t \p) = t [\gamma \D \gamma](\p)$ with respect to $t$ and setting $t = 1$ yields
\begin{align}\label{gam:Hess2}
[\D (\gamma \D \gamma)(\p)] \p = ( \gamma \D \gamma)(\p).
\end{align}

\subsubsection{Outer expansions} 
For points $\x$ in $\Omega_\eps^{\pm}$, we assume an outer expansion of the form
\[
\uopt_\eps(\x) = \sum_{k=0}^\infty \eps^k \u_k(\x), \quad \opt_\eps(\x) = \sum_{k=0}^\infty \eps^k \vp_k(\x), 
\]
where all functions are sufficiently smooth and the summations converge. From \eqref{SI:ass} we deduce that 
\[
\opt_0(\x) \in \{-1,1\}, \quad \opt_i (\x)= 0 \text{ for } i \geq 1.
\]
Setting $\Omega_+ = \{\opt_0 = 1\}$ and $\Omega_{-} = \{\opt_0 = -1\}$, it holds that $(\pd \Omega_+ \cap \pd \Omega_-)\cap \Omega = \Lambda$. Then, substituting the outer expansion into \eqref{state}, we obtain to order $\mathcal{O}(1)$ the following system of equations
\begin{subequations}\label{outer:sys}
\begin{alignat}{2}
\label{o1} - \div ( \C(\pm 1) \E(\u_0)) &= \hh(\pm 1) \f && \quad \text{ in } \Omega_{\pm}, \\
\label{o2} \u_0 &= \0 && \quad \text{ on } \Gamma_D \cap \pd \Omega_{\pm}, \\
\label{o3} (\C(\pm 1) \E(\u_0)) \bm{n} &= \g && \quad \text{ on } \Gamma_g \cap \pd \Omega_{\pm}, \\
\label{o4} (\C(\pm 1) \E(\u_0)) \bm{n} &= \0 && \quad \text{ on } \Gamma_0 \cap \pd \Omega_{\pm}.
\end{alignat}
\end{subequations}
Moreover, from the definition of the Lagrange multiplier $\mu_\eps$ we see that 
\begin{equation}\label{L:exp}
\begin{aligned}
& \mu_\eps = \frac{1}{\eps} \mu_{-1} + \mu_0 + \mathcal{O}(\eps), \quad \text{ with } \quad \mu_{-1} := \frac{1}{|\Omega|} \int_\Omega \widehat{\alpha} \Psi_0'(\opt_0) \dx = 0, \\
& \mu_0 = \frac{1}{|\Omega|} \int_\Omega 2 \beta \hh'(\opt_0) \f \cdot \uopt_0- \beta \C'(\opt_0) \E(\uopt_0): \E(\uopt_0) \dx\an{.}
\end{aligned}
\end{equation}
It remains to derive the boundary conditions for \eqref{outer:sys} holding on $\Lambda$, which can be achieved with the inner expansions.
\subsubsection{Inner expansions} 
We assume the  outer boundary $\Gamma_\eps^+$ and inner boundary $\Gamma_\eps^-$ of the annular region $\Omega_\eps^{\rm{I}}$ can be parameterized over the smooth hypersurface $\Lambda$. Let $\bnu$ denote the unit normal of $\Lambda$ pointing from $\Omega_-$ to $\Omega_+$, and we choose a spatial parameter domain $U \subset \R^{d-1}$ with a local parameterization $\rr: U \to \R^d$ of $\Lambda$.  Let $d$ denote the signed distance function of $\Lambda$ with $d(\x) > 0$ if $\x \in \Omega_+$, and denote by $z = \frac{d}{\eps}$ the rescaled signed distance.  Then, by the smoothness of $\Lambda$, there exists $\delta_0 > 0$ such that for all $\x \in \{|d(\x)| < \delta_0\}$, we have the representation
\[
\x = G^\eps(\sss,z) = \rr(\sss) + \eps z \bnu(\sss),
\]
where $\sss = (s_1, \dots, s_{d-1}) \in U$. In particular, $\rr(\sss) \in \Lambda$ is the projection of $\x$ to $\Lambda$ along the normal direction. This representation allows us to infer the following expansion for gradients and divergences \cite{GStin}:
\begin{align}
\label{eq:nabladiv}
\nabla_{\x} b = \frac{1}{\eps} \pd_z \hat b \bnu + \nabla_\Lambda \hat b + \mathcal{O}(\eps), \quad \div_{\x} \bm{j} = \frac{1}{\eps} \pd_z \widehat{\bm{j}} \cdot \bnu + \div_\Lambda \widehat{\bm{j}} + \mathcal{O}(\eps)
\end{align}
for scalar functions $b(\x) = \hat{b}(\sss(\x), z(\x))$ and vector functions $\bm{j}(\x) = \widehat{\bm{j}}(\sss(\x), z(\x))$.  In the above $\nabla_\Lambda$ is the surface gradient on $\Lambda$ and $\div_\Lambda$ is the surface divergence. Analogously, for a vector function $\bm{j}$, we find that 
\[
\nabla_{\x} \bm{j} = \frac{1}{\eps} \pd_z \widehat{\bm{j}} \otimes \bnu + \nabla_\Lambda \widehat{\bm{j}} + \mathcal{O}(\eps).
\]
For points close by $\Lambda$ in $\Omega_\eps^{\rm{I}}$, we assume an expansion of the form
\[
\uopt_\eps(\x) = \sum_{k=0}^\infty \eps^k \bm{U}_k(\sss,z), \quad \opt_\eps(\x) = \sum_{k=0}^\infty \eps^k \Phi_k(\sss,z).
\]
\subsubsection{Matching conditions} In a tubular neighborhood of $\Lambda$, we assume the outer expansions and the inner expansions  hold simultaneously. Since $\Gamma_\eps^\pm$ are assumed to be graphs over $\Lambda$, we introduce the functions $Y_\eps^{\pm}(\sss)$ such that $\Gamma_{\eps}^{\pm} = \{ z = Y_{\eps}^{\pm}(\sss) \, : \, \sss \in U \}$.  Furthermore, we assume an expansion of the form
\[
Y_{\eps}^{\pm} (\sss) = Y_0^{\pm}(\sss) + \eps Y_1^{\pm}(\sss) + \mathcal{O}(\eps^2)
\]
is valid. As $\Gamma_\eps^{\pm}$ converge to $\Lambda$,  in the computations below we will deduce the values for $Y_0^{\pm}(\sss)$.  Then, by comparing these two expansions in this intermediate region we infer matching conditions relating the outer expansions to the inner expansions via boundary conditions for the outer expansions. For a scalar function $b({\x})$ admitting an outer expansion $\sum_{k=0}^\infty \eps^k b_k({\x})$ and an inner expansion $\sum_{k=0}^\infty \eps^k B_k({\sss},z)$, it holds that (see \cite{Cahn} and \cite[Appendix D]{GStin})
\begin{align*}
B_0(\sss,z) & \to \begin{cases}
\lim_{\delta \searrow 0} b_0(\x + \delta \bnu(\x)) =: b_0^+(\x) & \text{ for } z \to Y_0^+, \\
\lim_{\delta \searrow 0} b_0(\x - \delta \bnu(\x)) =: b_0^-(\x) & \text{ for } z \to Y_0^-,
\end{cases} \\
 \pd_z B_0(\sss,z) & \to 0 \text{ as } z \to Y_0^{\pm}, \\
\pd_z B_1(\sss,z) & \to \begin{cases}
\lim_{\delta \searrow 0} (\nabla b_0)(\x + \delta \bnu(\x)) \cdot \bnu(\x) =: \nabla b_0^+ \cdot \bnu & \text{ for } z \to Y_0^+, \\
\lim_{\delta \searrow 0} (\nabla b_0)(\x - \delta \bnu(\x)) \cdot \bnu(\x) =: \nabla b_0^- \cdot \bnu& \text{ for } z \to Y_0^-,
\end{cases}
\end{align*}
for $\x \in \Lambda$. Consequently, we denote the jump of a quantity $b$ across $\Lambda$ as 
\[
[b]_{-}^{+} := \lim_{\delta \searrow 0} b(\x + \delta \bnu(\x)) - \lim_{\delta \searrow 0} b(\x - \delta \bnu(\x) \an{)}  \quad \text{ for }\x \in \Lambda.
\]
\subsubsection{Analysis of the inner expansions}
We introduce the notation $\sym{\bm{B}} = \frac{1}{2}(\bm{B} + \bm{B}^{\top})$ for a second order tensor $\bm{B}$.  Plugging in the inner expansions to the state equation \eqref{state}, to leading order $\mathcal{O}(\frac{1}{\eps^2})$ we find that
\[
\0 = \bnu \pd_z \Big ( \C(\Phi_0) \sym{\pd_z \bm{U}_0 \otimes \bnu } \Big ).
\]
Taking the product with $\bm{U}_0$ and by the symmetry of $\C$ we have the relation
\[
\bnu  \pd_z \Big ( \C(\Phi_0) \sym{\pd_z \bm{U}_0 \otimes \bnu } \Big ) \cdot \bm{U}_0 = \pd_z \Big ( \C(\Phi_0) \sym{\pd_z \bm{U}_0 \otimes \bnu} \Big ): \bm{U}_0 \otimes \bnu.
\]
Integrating over $z$ and by parts, using $\pd_z \bnu = \0$ and by the matching condition $\lim_{z \to Y_0^{\pm}} \pd_z \bm{U}_0 = \0$ leads to
\[
0 = \int_{Y_0^-}^{Y_0^{+}} \C(\Phi_0) \sym{\pd_z \bm{U}_0 \otimes \bnu} : \sym{\pd_z \bm{U}_0 \otimes \bnu} \, dz.
\]
Coercivity of $\C$ yields that $\sym{\pd_z \bm{U}_0 \otimes \bnu} = \0$ and hence $\pd_z \bm{U}_0(\sss,z) = \0$. This implies $\bm{U}_0$ is constant in $z$ and by the matching conditions we obtain
\begin{align}\label{inn:u}
\u_0^+ = \bm{U}_0 = \u_0^- \quad \implies \quad [\u_0]_{-}^{+} = \0.
\end{align}
Then, using $\pd_z \bm{U}_0 = \0$, to the next order $\mathcal{O}(\frac{1}{\eps})$ we obtain from the state equation \eqref{state}
\begin{align}\label{inn:state}
\0 = \pd_z \Big ( \C(\Phi_0) \sym{\pd_z \bm{U}_1 \otimes \bnu + \nabla_\Lambda \bm{U}_0 } \bnu \Big ).
\end{align}
Integrating over $z$ and using the matching condition
\[
\pd_z \bm{U}_1 \otimes \bnu + \nabla_\Lambda \bm{U}_0  \to \begin{cases}
\nabla \u_0^+ & \text{ for } z \to Y_0^+, \\
\nabla \u_0^- & \text{ for } z \to Y_0^-,
\end{cases}
\]
we obtain
\begin{align}\label{inn:u2}
\C(1) \E(\u_0^+) \bnu - \C(-1) \E(\u_0^-) \bnu = [\C \E(\u_0) ]_{-}^{+} \bnu = \0.
\end{align}
From the optimality condition \eqref{opt1}, to leading order $\mathcal{O}(\frac{1}{\eps^2})$ we obtain the trivial equality $0 = 0$ due to $\pd_z \bm{U}_0 = \0$. Recalling \eqref{L:exp} for $\mu_{-1}$, to the next order $\mathcal{O}(\frac{1}{\eps})$ of \eqref{opt1} we have
\begin{align}\label{inn:ode}
\Psi_0'(\Phi_0) - |\gamma(\bnu)|^2 \pd_{zz} \Phi_0 = 0,
\end{align}
where we again use $\pd_z \bm{U}_0 = \0$ to see that no terms from the elastic part contribute at order $\mathcal{O}(\frac{1}{\eps})$, the expression \eqref{eq:nabladiv} for the divergence in the inner variables, the relation $\D \gamma(\bnu) \cdot \bnu = \gamma(\bnu)$ from \eqref{Dgam}, the fact that $\gamma \D \gamma$ is positively homogeneous of degree one, so that 
\[
(\gamma \D \gamma) \big ( \tfrac{1}{\eps} \pd_z \Phi_0 \bnu \big ) = \tfrac{1}{\eps}\pd_z \Phi_0  (\gamma \D \gamma) (\bnu ),
\]
as well as the fact that $\bnu$ is independent of $z$. We now construct a solution to \eqref{inn:ode} fulfilling the conditions $\Phi_0(\sss,z) \to \pm 1$ as $z \to Y_0^{\pm}(\sss)$ as per the matching conditions, and also satisfies $\pd_z \Phi_0(\sss,z) > 0$. Let $\psi(z)$ be the monotone function defined in \eqref{Obs:prof} which satisfies the boundary value problem
\[
\psi''(z) = \Psi_0'(\psi(z)), \quad \lim_{z \to \pm \frac{\pi}{2}} \psi(z) = \pm 1,
\]
with $\psi(0) = 0$ and $\psi'(z) >0$ for all $z \in \R$. We define $\Phi_0(\sss,z) = \psi \Big ( \frac{z}{\gamma(\bnu(\sss))} \Big )$, and a short calculation shows that 
\begin{align*}
\pd_{zz} \Phi_0(\sss,z) = \frac{1}{|\gamma(\bnu(\sss))|^2} \psi'' \Big ( \frac{z}{\gamma(\bnu(\sss))} \Big ) =  \frac{1}{|\gamma(\bnu(\sss))|^2} \Psi_0'( \Phi_0(\sss,z)).
\end{align*}
The property $\psi(\pm \frac \pi2) =\pm1$ and the matching conditions for $\Phi_0$ provide the identifications
\[
	Y_0^+(\sss) = \frac{\pi}{2 }\gamma(\bnu(\sss)),
	\quad 
	Y_0^-(\sss)=  -\frac{\pi}{2} \gamma(\bnu(\sss)).
\]
Moreover, since $\psi$ is monotone and $\gamma(\bnu)$ is positive, we can infer that $\pd_z \Phi_0(\sss,z) > 0$.  Now, multiplying \eqref{inn:ode} with $\pd_z \Phi_0$ and integrat\an{ing} over $z$ yields the so-called equipartition of energy
\[
\Psi_0( \Phi_0(\sss,z)) = \frac{|\gamma(\bnu(\sss))|^2}{2} |\pd_z \Phi_0(\sss,z)|^2,
\]
after using $\Psi_0(-1) = 0$ and $\lim_{z \to Y_0^-} \pd_z \Phi_0(\sss,z) = 0$. Performing another integration over $z$ leads to the relation
\begin{align}\label{value}
\int_{Y_0^-}^{Y_0^+} |\pd_z \Phi_0|^2 \dz = \frac{2}{|\gamma(\bnu(\sss))|^2} \int_{Y_0^-}^{Y_0^+} \Psi_0(\Phi_0(\sss,z)) \dz = \int_{-1}^1 \frac{\sqrt{2 \Psi_0(r)}}{\gamma(\bnu(\sss))}  \dr = \frac{c_{\Psi}}{\gamma(\bnu(\sss))}.
\end{align}
To the next order $\mathcal{O}(1)$, we find that 
\begin{equation}\label{inn:solv}
\begin{aligned}
0 & = 2 \beta \hh'(\Phi_0) \f \cdot \bm{U}_0 + \mu_0 +\widehat \alpha \Psi_0''(\Phi_0) \Phi_1 - \an{\widehat\alpha }\div_\Lambda ([\gamma \D \gamma](\bnu) \pd_z \Phi_0)  \\
& \quad - \an{\widehat\alpha} \pd_z (\bnu \cdot \D(\gamma \D \gamma)(\pd_z \Phi_0 \bnu) (\pd_z \Phi_1 \bnu + \nabla_\Lambda \Phi_0)) \\
& \quad - \beta \C'(\Phi_0) \sym{\pd_z \bm{U}_1 \otimes \bnu + \nabla_\Lambda \bm{U}_0}: \sym{\pd_z \bm{U}_1 \otimes \bnu + \nabla_\Lambda \bm{U}_0}.
\end{aligned}
\end{equation}
The aim is to multiply the above with $\pd_z \Phi_0$ and integrate over $z$, but first, let us derive some useful relations. Using the symmetry $\C_{ijkl} = \C_{jikl}$ of the elasticity tensor and $\pd_z \bm{U}_0 = \0$, we deduce that, with the shorthand $\E_U = \sym{\pd_z \bm{U}_1 \otimes \bnu + \nabla_\Lambda \bm{U}_0}$,
\begin{align*}
\C(\Phi_0) \E_U : \pd_z \E_U = \C(\Phi_0) \E_U : \sym{\pd_{zz} \bm{U}_1 \otimes \bnu} = \C(\Phi_0) \E_U \bnu \cdot \pd_{zz} \bm{U}_1.
\end{align*}
Hence, together with $\pd_z(\C(\Phi_0) \E_U \bnu) = \0$ from \eqref{inn:state} and the matching conditions we obtain the relation
\begin{align*}
\int_{Y_0^-}^{Y_0^+} \pd_z \C(\Phi_0) \E_U : \E_U \dz & =  \int_{Y_0^-}^{Y_0^+} \pd_z \C(\Phi_0) \E_U : \E_U + 2 \C(\Phi_0) \E_U : \pd_z \E_U \dz \\
& \quad - \int_{Y_0^-}^{Y_0^+} 2 \C(\Phi_0) \E_U \bnu \cdot \pd_{zz} \bm{U}_1 + 2 \pd_z( \C(\Phi_0) \E_U \bnu) \cdot \pd_z \bm{U}_1 \dz \\
& = \int_{Y_0^-}^{Y_0^+} \pd_z \Big ( \C(\Phi_0) \E_{U} : \E_{U} \Big ) - 2\pd_z \Big ( \C(\Phi_0) \E_U \bnu \cdot \pd_z \bm{U}_1 \Big ) \dz\\
& = [\C \E( \u_0) : \E( \u_0) - 2\C(\Phi_0) \E( \u_0) \bnu \cdot (\nabla \u_0) \bnu]_{-}^{+}.
\end{align*}
By \eqref{gam:Hess}, \eqref{gam:Hess2}, \eqref{inn:ode}, \eqref{value} and the matching conditions of $\pd_z \Phi_0$, we have
\begin{align*}
& \int_{Y_0^-}^{Y_0^+} \Psi_0''(\Phi_0) \Phi_1 \pd_z \Phi_0 \dz = - \int_{Y_0^-}^{Y_0^+} \Psi_0'(\Phi_0) \pd_z \Phi_1 \dz = -  |\gamma(\bnu)|^2 \int_{Y_0^-}^{Y_0^+} \pd_{zz} \Phi_0 \pd_z \Phi_1 \dz, \\
& \int_{Y_0^-}^{Y_0^+} \bnu \cdot [\D(\gamma \D \gamma)(\bnu)] \bnu (\pd_{zz} \Phi_1 \pd_z \Phi_0) \dz = - |\gamma(\bnu)|^2 \int_{Y_0^-}^{Y_0^+} \pd_{z} \Phi_1 \pd_{zz} \Phi_0 \dz,\\
& \int_{Y_0^-}^{Y_0^+} \div_\Lambda ([\gamma \D \gamma](\bnu) \pd_z \Phi_0) \pd_z \Phi_0 \dz = \frac{c_\Psi \div_\Lambda ( [\gamma \D \gamma](\bnu))}{\gamma(\bnu)} + [\gamma \D \gamma](\bnu) \cdot \nabla_\Lambda \Big ( \frac{c_{\Psi}}{2 \gamma(\bnu)} \Big ), \\
& \int_{Y_0^-}^{Y_0^+} (\bnu \cdot [\D( \gamma \D \gamma)(\bnu)] \nabla_\Lambda \pd_z \Phi_0) \pd_z \Phi_0 \dz = [\gamma \D \gamma](\bnu) \cdot \nabla_{\Lambda} \Big ( \frac{c_{\Psi}}{2 \gamma(\bnu)} \Big ).
\end{align*}
Hence, 
\begin{align*}
&\int_{Y_0^-}^{Y_0^+} \div_\Lambda ([\gamma \D \gamma](\bnu) \pd_z \Phi_0) \pd_z \Phi_0 \dz + \int_{Y_0^-}^{Y_0^+} (\bnu \cdot [\D( \gamma \D \gamma)(\bnu)] \nabla_\Lambda \pd_z \Phi_0) \pd_z \Phi_0 \dz \\
& \quad =  \frac{c_\Psi \div_\Lambda ( [\gamma \D \gamma](\bnu))}{\gamma(\bnu)} + [\gamma \D \gamma](\bnu) \cdot \nabla_\Lambda \Big ( \frac{c_{\Psi}}{ \gamma(\bnu)} \Big ) = c_{\Psi} \div_{\Lambda} \big ( \D \gamma(\bnu) \big ),
\end{align*}
and upon multiplying \eqref{inn:solv} with $\pd_z \Phi_0$ and integrate over $z$, employ the matching conditions leads to the following solvability condition for $\Phi_1$:
\begin{equation}\label{solvability}
\begin{aligned}
0 & = 2 \beta \f \cdot \u_0 [\hh]_{-}^+ + 2 \mu_0 - \widehat \alpha c_{\Psi} \div_\Lambda (\D \gamma(\bnu)) \\
& \quad - \beta [\C \E(\u_0) : \E(\u_0) - 2\C \E(\u_0) \bnu \cdot (\nabla \u_0) \bnu]_{-}^{+} \quad \text{ on } \Lambda.
\end{aligned}
\end{equation} 
Note that the assumed higher regularity $\f \in H^1(\Omega, \R^d)$ allows us to define $\f$ on $\Lambda$. Thus, the sharp interface limit $\eps \to 0$ consists of the system \eqref{outer:sys} posed in $\Omega_{\pm}$, along with the boundary conditions \eqref{inn:u}, \eqref{inn:u2} and \eqref{solvability} on $\Lambda$.

\begin{remark}[Isotropic case]
In the isotropic case $\gamma(\bnu) = |\bnu|$, for unit normals $\bnu$ of $\Lambda$, we have that $\D \gamma(\bnu) = \frac{\bnu}{|\bnu|} = \bnu$ and $\div_\Lambda(\D \gamma(\bnu)) = - \kappa_\Lambda$, where $\kappa_{\Lambda}$ is the mean curvature of $\Lambda$. Then, choosing $\hh(\vp) = 1$ so that $[\hh]_{-}^{+} = 0$, we observe that \eqref{solvability} simplifies to 
\[
0 = 2\mu_0 +  \widehat \alpha c_{\Psi} \kappa_{\Lambda} - \beta [ \C\E(\u_0) : \E(\u_0) - 2 \C\E(\u_0) \bnu \cdot (\nabla \u_0) \bnu)]_{-}^{+} \text{ on } \Lambda,
\]
which coincides with the formula obtained from \cite[(24)]{BGHR} with adjoint $\q_0 = \beta \u_0$, zero eigenstrain $\overline{\E} = \0$, Lagrange multiplier $\lambda_0 = \mu_0$ for the mass constraint, and $h_\Omega(\x,\u) = \beta \f \cdot \u$ that satisfies $[h_\Omega(\x,\u_0)]_-^+ = 0$ thanks to \eqref{inn:u}. 
\end{remark}

\subsection{Relation to shape derivatives derived in \cite{AllaireDEFM17}}
To relate our work with the setting of \cite{AllaireDEFM17}, we first set $\f = \0$ in \eqref{outer:sys}, and neglect all the equations posed in $\Omega_-$.  
Then, the state equation \cite[(2.1)]{AllaireDEFM17} can be obtained by writing $\C(+1)$ as $A$, $\Omega_+$ as $\Omega$ in \eqref{outer:sys}.  Note that in this setting the free boundary $\Lambda$ is identified with the traction-free part $\Gamma_0 \cap \pd \Omega_+$ of $\pd \Omega_+$.  Furthermore, the normal vector $\bm{n}$ in \cite{AllaireDEFM17} on $\Lambda$ is equal to $-\bnu$ in our notation. 

Writing $\varphi(\bm{n})$ as $\gamma(-\bnu)$, the anisotropic perimeter functional $P_g$ defined in \cite[(3.1)]{AllaireDEFM17} can be expressed as in \eqref{anis:peri}, and in terms of our notation its shape derivative is given as (compare \cite[Prop.~3.1]{AllaireDEFM17})
\[
P_g'(\Omega_{+})[\bm{\theta}] = \int_{\Lambda} -\kappa_\Lambda \gamma(\bnu) \bm{\theta} \cdot \bnu + \D \gamma(\bnu) \cdot \nabla_\Lambda (\bm{\theta} \cdot \bnu) \dH,
\]
for admissible vector fields $\bm{\theta}$ of sufficient smoothness, where we used that $\D \gamma$ is positively homogeneous of degree zero (cf.~\eqref{Dgam}) and the identity $\kappa_{\Lambda} = - \div_\Lambda \bnu = \div_{\Lambda} \bm{n}$. Here we point out a typo in the manuscript, where $\nabla_{\pd \Omega}(\varphi(\bm{n}))$ written there should in fact be $\D \varphi(\bm{n})$ (compare \cite[Prop.~8]{Dapogny}).

Now, consider the shape optimization problem of minimizing $L(\Omega_{+}) := \beta J(\Omega_+) + \an{\widehat\alpha }c_{\Psi} P_g(\Omega_{+})$ with the mean compliance 
\[
J(\Omega_+) := \int_{\Omega_{+}} \C(1) \E(\u) : \E(\u) \dx.
\]
Then, with sufficiently smooth solutions, its shape derivative in our notation is (see \cite[p.~299]{AllaireDEFM17})
\begin{align*}
L'(\Omega_{+})[\bm{\theta}] & = \int_{\Lambda} [- \beta \C(1) \E(\u) : \E(\u) - \widehat \alpha c_{\Psi} \kappa_\Lambda \gamma(\bnu)] \bm{\theta} \cdot \bnu + \widehat \alpha c_{\Psi} \D \gamma(\bnu) \cdot \nabla_\Lambda(\bm{\theta} \cdot \bnu) \dH \\
& = \int_{\Lambda} [- \beta \C(1) \E(\u) : \E(\u) - \widehat \alpha c_{\Psi} \div_{\Lambda} (\D \gamma(\bnu)) ] \bm{\theta} \cdot \bnu  \dH,
\end{align*}
where in the above we have applied the integration by parts formula
\[
\int_\Lambda \nabla_{\Lambda} (\bm{\theta} \cdot \bnu) \cdot \D \gamma(\bnu) \dH = \int_{\Lambda} [\kappa_{\Lambda} \D \gamma(\bnu) \cdot \bnu - \div_{\Lambda}(\D \gamma(\bnu))] (\bm{\theta} \cdot \bnu) \dH,
\]
as well as the relation $\D \gamma(\bnu) \cdot \bnu = \gamma(\bnu)$ from \eqref{Dgam} to cancel the terms involving $\kappa_{\Lambda}$. Hence, the strong formulation of $L'(\Omega_{+})[\bm{\theta}] = 0$ is
\[
0 = \widehat \alpha c_{\Psi} \div_{\Lambda}(\D \gamma(\bnu)) + \beta \C(1) \E(\u) : \E(\u) \quad \text{ on } \Lambda = \Gamma_0 \cap \pd \Omega_{+}.
\]
This coincides with the identity obtained if we neglect the Lagrange multiplier, set $\C(-1) = \0$ and $\f = \0$ in \eqref{solvability}, and also use the relation $\C(+1) \E(\u_0^+) \bnu = \0$ from \eqref{inn:u2}.

\section{Numerical approximation}\label{sec:num}
\subsection{BGN anisotropies}
For the numerical approximation of an optimal design variable $\opt \in \V_m$ to the structural optimization problem \eqref{opt}, we first consider a discretization of \an{the variational inequality} \eqref{opt:alt} with differentiable $\gamma$. In general, the term $\gamma \D \gamma$ is highly nonlinear, and to facilitate the subsequent discussion regarding its discretization we first consider a matrix-type reformulation of the anisotropy density function introduced by the works of the first and third authors, see, e.g., \cite{BGNgeo,eck,vch} for more details.

Suppose for some $L \in \N$, the anisotropy density function $\gamma$ can be expressed as
\begin{align}\label{BGN:ani}
\gamma(\q) = \sum_{l=1}^L \gamma_l(\q), \text{ where } \gamma_l(\q) := \sqrt{\q \cdot \Gl \q} \text{ for } \q \in \R^d,
\end{align}
with symmetric positive definite matrices $\Gl \in \R^{d \times d}$ for $l = 1, \dots, L$. Then, a short calculation shows that 
\[
\D\gamma(\q) = \sum_{l=1}^L \frac{\Gl \q}{\gamma_l(\q)}  \quad \forall \q \in \R^d \setminus \{\0\},
\]
and recalling that $A(\cdot) := \frac{1}{2} |\gamma(\cdot)|^2$, we infer also 
\[
\D A(\q) = [\gamma \D \gamma](\q) = \gamma(\q) \sum_{l=1}^L \frac{\Gl \q}{\gamma_l(\q)} \quad \forall \q \in \R^d \setminus \{\0\}.
\]
For $\p$ close to $\q$, we now perform a linearization $\D A(\q) \approx \BB(\p) \q$, where
\begin{align*}
\BB(\p) := \begin{cases}
\displaystyle \gamma(\p) \sum_{l=1}^L \frac{\Gl}{\gamma_l(\p)} & \text{ if } \p \neq \0, \\[10pt]
\displaystyle L \sum_{l=1}^L \Gl & \text{ if } \p = \0,
\end{cases}
\end{align*}
so that 
\[
\BB(\q) \q = \D A(\q) \text{ for all } \q \in \R^d.
\]
Furthermore, as $\{\Gl\}_{l=1}^L$ are symmetric positive definite, the matrix $\BB(\p)$ is also symmetric positive definite for all $\p \in \R^d$. We term anisotropies $\gamma$ that are of the form \eqref{BGN:ani} as BGN anisotropies. Notice that for the definition of the linearized matrix $\BB$ it is sufficient to have anisotropy densities $\gamma \in C^0(\R^d)$.  However, due to our examples \eqref{D:gam} and \eqref{2D:gam:Ncon} it will turn out that the corresponding $\Gl$ are no longer constant matrices.  Thus, inspired by the works \cite{BGNgeo,eck,vch} we extend their approach to the case where $\Gl$ are piecewise constant and dependent on $\q$, as shown in the following examples.  In our numerical approximation to the optimality condition detailed in the next section we simply replace $\D A(\q) = [\gamma \D \gamma](\q)$ by $\BB(\q)\q$, where $\BB(\q)$ now involves piecewise constant matrices $\Gl(\q)$.

\begin{ex}
The convex anisotropy \eqref{D:gam} can be expressed as an extended BGN anisotropy
\begin{align*}
\gamma_{\alpha}(\q) = \sqrt{\q \cdot \GG(\q) \q},
\end{align*}
where for $\q = (q_1, \dots, q_d) \in \R^d$,
\begin{align*}
\GG(\q) = 
\begin{cases}
\displaystyle {\bf I}  & \displaystyle \text{ if } q_d \geq - \alpha |\q|,\\[10pt]
\displaystyle \mathrm{diag} \big ( 0, 0, \dots, 0, \alpha^{-2} \big ) & \displaystyle \text{ if } q_d < -\alpha |\q|.
\end{cases}
\end{align*}
In turn, this provides us with the linearization matrix $\BB(\q) = \GG(\q)$.
\end{ex}

\begin{ex}
The nonconvex anisotropy \eqref{2D:gam:Ncon} can be written as an extended BGN anisotropy 
\begin{align*}
\gamma_{\alpha,\lambda}(\q) = \sqrt{\q \cdot \GG(\q) \q},
\end{align*}
where for $\q \in \R^2$,
\begin{align*}
\GG(\q) = \begin{cases}
\displaystyle {\bf I}  & \displaystyle \text{ if } q_2 \geq - \alpha |\q| ,\\[10pt]
\displaystyle \begin{pmatrix}
\big ( \frac{1-\lambda}{\lambda} \big)^2 \frac{1}{1-\alpha^2} & - \frac{1-\lambda}{\lambda^2 \alpha} \frac{1}{\sqrt{1-\alpha^2}} \\
- \frac{1-\lambda}{\lambda^2 \alpha} \frac{1}{\sqrt{1-\alpha^2}} & \frac{1}{\lambda^2 \alpha^2} 
\end{pmatrix} & \displaystyle \text{ if } q_1 \leq 0, \, q_2 < - \alpha |\q|, \\[15pt]
\displaystyle \begin{pmatrix}
\big ( \frac{1-\lambda}{\lambda} \big)^2 \frac{1}{1-\alpha^2} & \frac{1-\lambda}{\lambda^2 \alpha} \frac{1}{\sqrt{1-\alpha^2}} \\
\frac{1-\lambda}{\lambda^2 \alpha} \frac{1}{\sqrt{1-\alpha^2}} & \frac{1}{\lambda^2 \alpha^2} 
\end{pmatrix} & \displaystyle \text{ if } q_1 > 0, \, q_2 < - \alpha |\q|.
\end{cases}
\end{align*}
Then, the linearized matrix is given by $\BB(\q) = \GG(\q)$.
\end{ex}
For numerical simulations we also consider regularizations of the convex anisotropy $\gamma_{\alpha}$ in \eqref{D:gam} of the form
\begin{align}\label{eq:HG3dnew}
\widehat{\gamma}_{\alpha,\delta}(\q) := \gamma_{\alpha}(\q) + \delta |\q|
\end{align}
for $\delta > 0$. This can also be expressed as an extended BGN anisotropy with $L = 2$, where we take $\gamma_1(\q) = \gamma_\alpha(\q)$, $\gamma_2(\q) = \delta |\q|$, 
\[
\GG_1(\q) = 
\begin{cases}
\displaystyle {\bf I}  & \displaystyle \text{ if } q_d \geq - \alpha |\q|,\\[10pt]
\displaystyle \mathrm{diag} \big ( 0, 0, \dots, 0, \alpha^{-2} \big ) & \displaystyle \text{ if } q_d < -\alpha |\q|,
\end{cases} \quad \text{ and } \quad \GG_2(\q) = \delta^2 {\bf I}\an{.}
\]
The linearization matrix $\BB(\q)$ has the form
\[
\BB(\q) =
\begin{cases}
 \widehat{\gamma}_{\alpha,\delta}(\q) \Big ( [\gamma_\alpha(\q)]^{-1} \GG_1(\q) + |\q|^{-1} \delta {\bf I} \Big ) & \text{ if } \q \neq \0, \\
2 (1+ \delta^2) {\bf I} & \text{ if } \q = \0.
\end{cases}
\]
In Figure~\ref{fig:frank3_01} we visualize the Frank diagram and Wulff shape of the anisotropy
\eqref{eq:HG3dnew} in two spatial dimensions with $\delta=0.1$ and $\alpha = 0.9$, $0.7$, $0.5$, $0.3$.
\begin{figure}
\center
\includegraphics[angle=-0,width=0.3\textwidth]{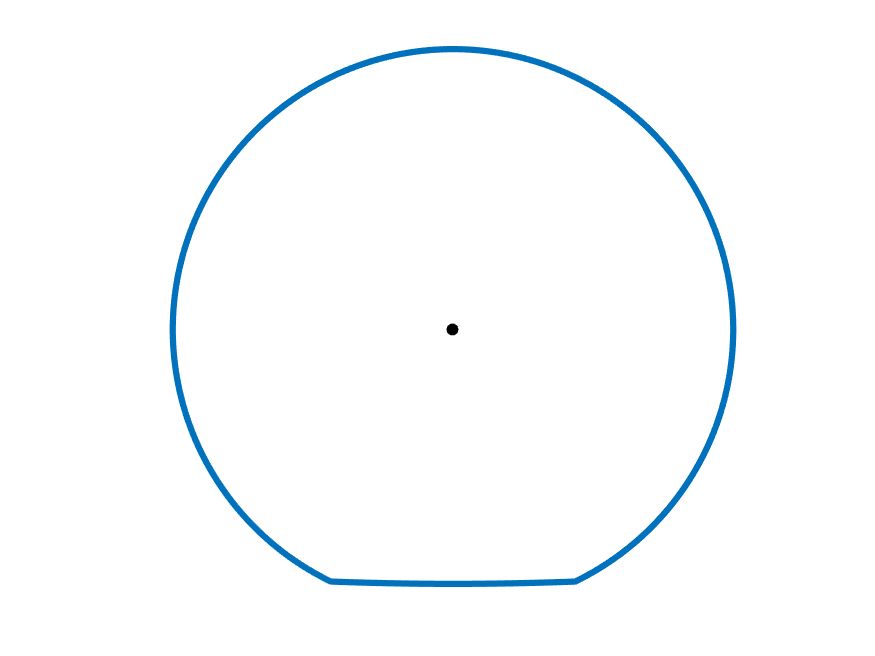} 
\includegraphics[angle=-0,width=0.3\textwidth]{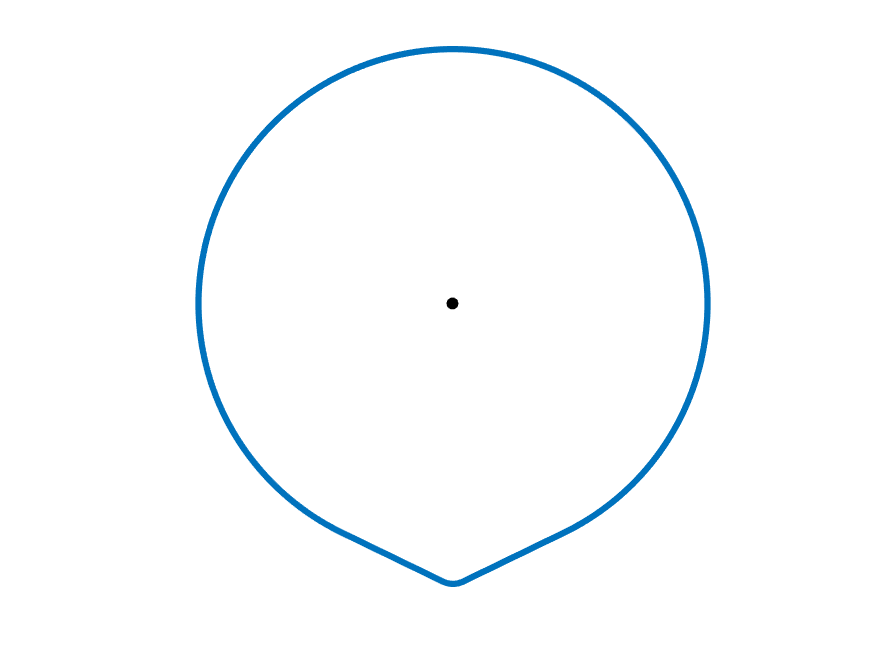}\\
\includegraphics[angle=-0,width=0.3\textwidth]{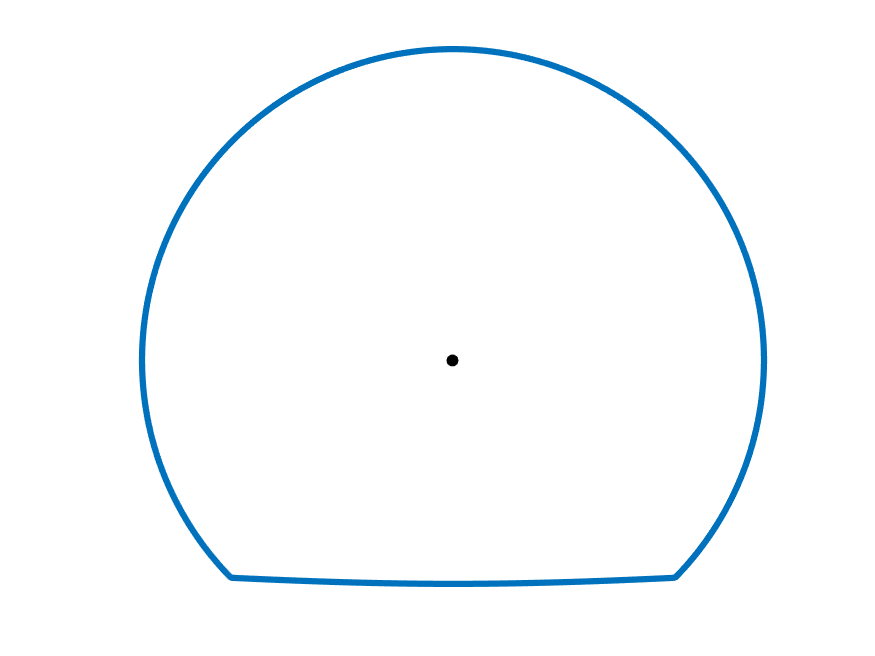}
\includegraphics[angle=-0,width=0.3\textwidth]{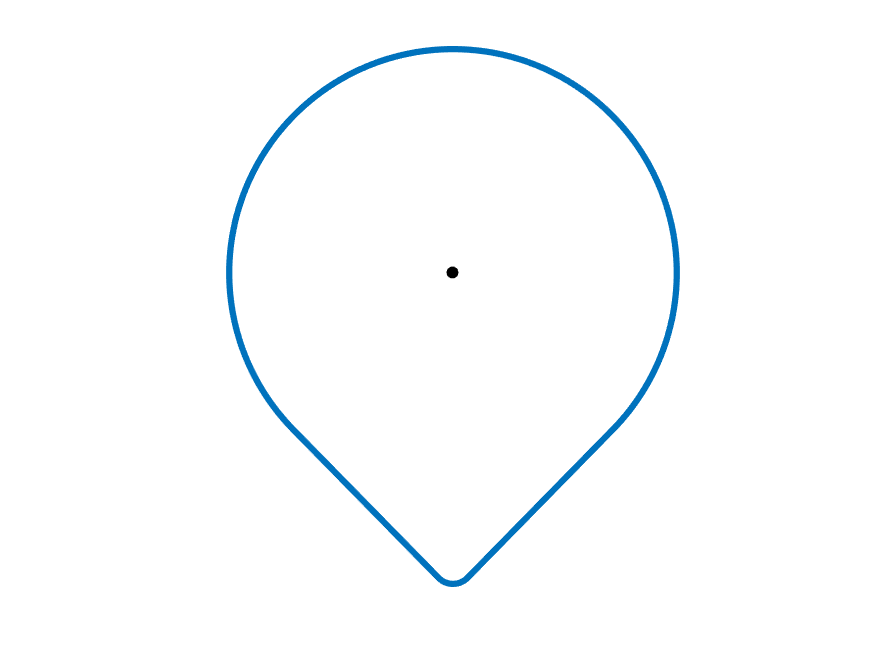}\\
\includegraphics[angle=-0,width=0.3\textwidth]{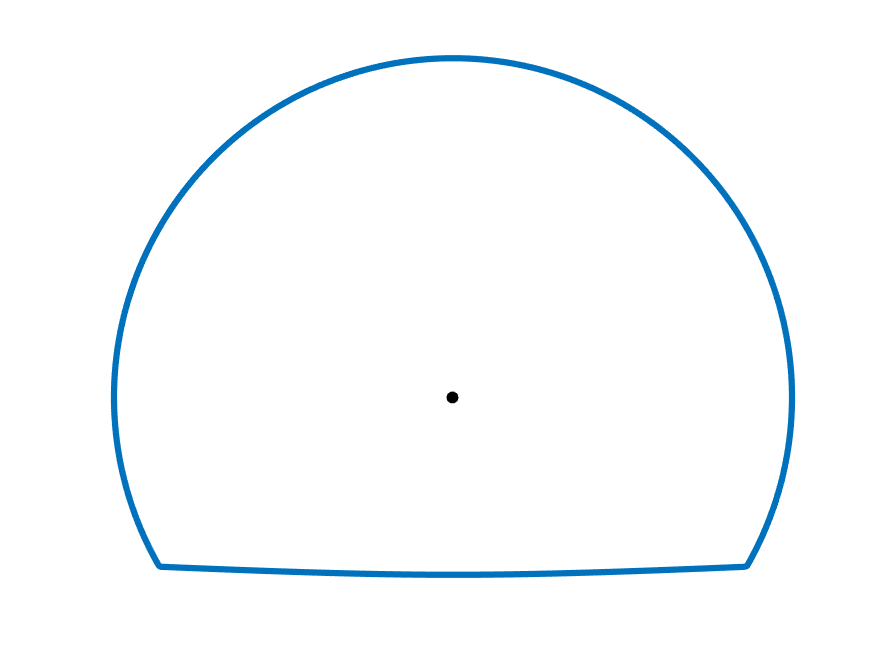}
\includegraphics[angle=-0,width=0.3\textwidth]{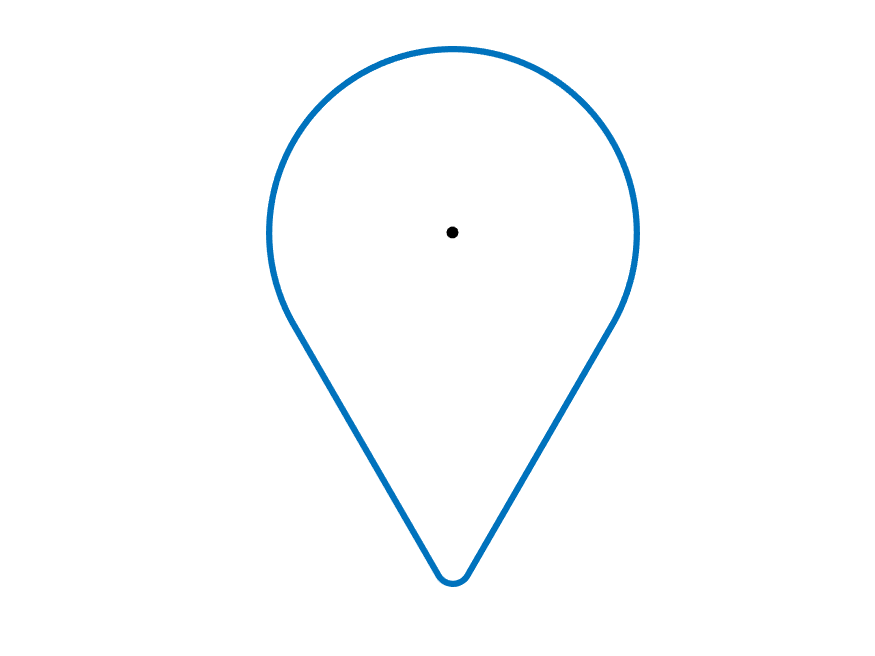}\\
\includegraphics[angle=-0,width=0.3\textwidth]{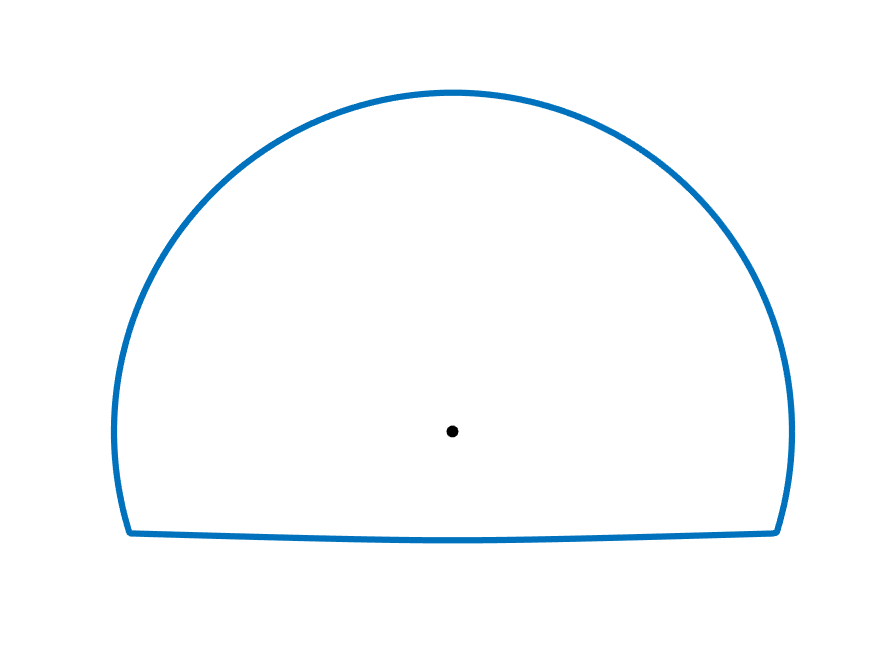}
\includegraphics[angle=-0,width=0.3\textwidth]{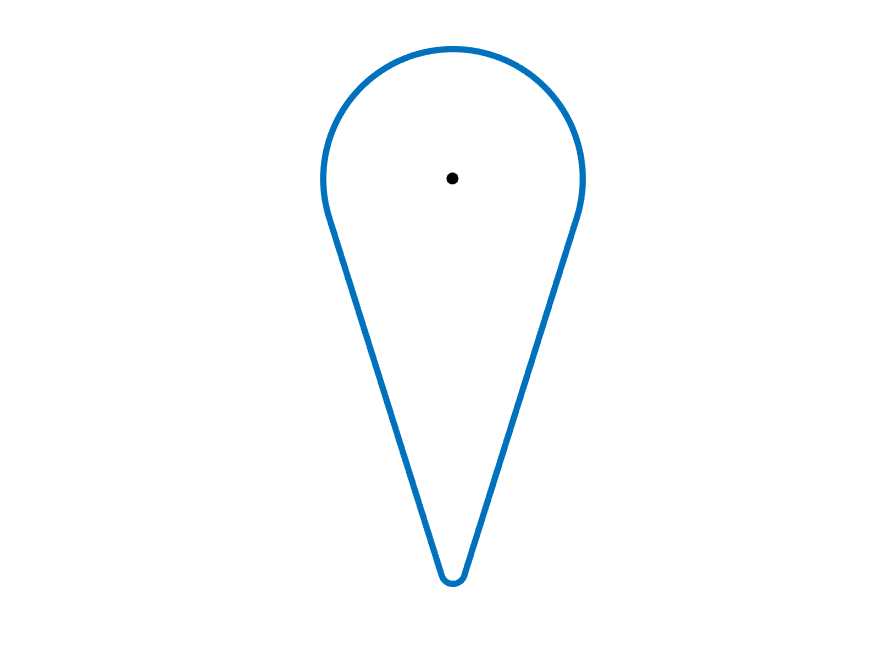}
\caption{The Frank diagram (left column) and Wulff shape (right column) for the regularized anisotropy \eqref{eq:HG3dnew} with $\delta = 0.1$. From top to bottom: $\alpha = 0.9$, $0.7$, $0.5$, $0.3$. Notice the bottom of each Frank diagram is not exactly a straight line, but slightly curved.
}
\label{fig:frank3_01}
\end{figure}%

\subsection{Finite element discretization}

Let $\mathcal{T}_{h}$ be a regular triangulation of $\Omega$ into disjoint open simplices. Associated with $\mathcal{T}_h$ are the piecewise linear finite element spaces
\begin{align*}
S^{h} = \left \{ \chi \in C^{0}(\overline\Omega) : \,  \chi_{\vert_{o}} \in P_{1}(o) \, \forall o \in \mathcal{T}_{h} \right \}
\quad\text{and}\quad \vecS^{h} =S^h \times \cdots \times S^h =  [S^{h}]^d,
\end{align*}
where we denote by $P_{1}(o)$ the set of all affine linear functions on $o$. In addition we define the convex subsets
\begin{equation} \label{eq:Khm}
\V^h = \left\{ \chi \in S^h : |\chi| \leq 1  \text{ in } \overline\Omega \right\}
\quad\text{and}\quad
\V^h_m = \left\{ \chi \in \an{\V}^h : (\chi, 1) = m (1,1) \right\},
\end{equation}
\an{where $(\cdot,\cdot)$ denotes the $L^{2}$--inner product on $\Omega$,}
as well as 
\begin{equation} \label{eq:ShD}
\vecS^h_D = \left\{ 
\bm{\eta} \in \vecS^h : \bm{\eta} = {\bf 0} \text{ on } \Gamma_D
\right\}.
\end{equation}
We now introduce a finite element approximation of the structural optimization problem \eqref{opt} and the optimality conditions described above. Given $\varphi_h^{n-1} \in \V^h_m$, find $(\u_h^n, \varphi_h^n) \in \vecS^h_D \times \V^h_m$ such that
\begin{subequations} \label{eq:FEA}
\begin{align}
&
\langle \mathcal{E}(\u_h^n),
\mathcal{E}(\veceta)\rangle_{ \C(\varphi_h^{n-1})}
= \left ( \hh(\varphi_h^{n-1}) \f, \veceta \right)^h + \int_{\Gamma_g} \g \cdot \veceta \qquad \forall \veceta \in \vecS^h_D\,,
\label{eq:FEAa}\\
& \left( \tfrac{\eps}{\tau} (\varphi_h^n - \varphi_h^{n-1}) 
- \tfrac{\widehat \alpha}{\eps} \varphi_h^n, \chi - \varphi_h^n \right)^h
+ \widehat \alpha \eps ( \BB(\varphi_h^{n-1}) \nabla \varphi_h^n, 
\nabla(\chi - \varphi_h^n)) \nonumber \\ & \qquad
\geq  \langle \E(\u_h^n),
\E(\u_h^n) (\chi - \varphi_h^n) 
\rangle_{\C'(\vp_h^{n-1})}
\qquad \forall \chi \in \V^h_m\,. \label{eq:FEAb}
\end{align}
\end{subequations}
Here $\tau$ denotes the time step size, 
$(\cdot,\cdot)^{h}$ is the usual mass lumped $L^{2}$--inner product on
$\Omega$, and $\inn{\bm{A},\bm{B}}_{\C} = \int_\Omega \C  \bm{A} : \bm{B} \dx$ for any fourth order tensor $\C$ and any matrices $\bm{A}$ and $\bm{B}$. 
We implemented the scheme \eqref{eq:FEA} with the help of the finite element 
toolbox ALBERTA, see \cite{Alberta}. 
To increase computational efficiency, we employ adaptive meshes, which have a 
finer mesh size $h_{f}$ within the diffuse interfacial regions and a coarser 
mesh size $h_{c}$ away from them, see \cite{voids3d,voids}
for a more detailed description. 
Clearly, the system \eqref{eq:FEAa} decouples, 
and so we first solve the linear system \eqref{eq:FEAa} in order to obtain
$\u_h^n$, and then solve the variational inequality \eqref{eq:FEAb} 
for $\varphi_{h}^{n}$.
We employ the package LDL, see \cite{Davis05}, together with the sparse 
matrix ordering AMD, see \cite{AmestoyDD04}, in order to solve \eqref{eq:FEAa}.
For the variational inequality \eqref{eq:FEAb} we employ a secant method
as described in \cite{BloweyE93} to satisfy the mass constraint
$(\varphi_{h}^{n}, 1) = m$, and use a nonlinear multigrid method
similar to \cite{Kornhuber96} for solving the variational inequalities 
over $\an{\V}^h$ that arise as the inner problems for the secant iterations. The second method always converged in at most five steps. Finally, to increase the efficiency of the numerical computations in this 
paper, at times we exploit the symmetry of the problem and perform the 
computations in question only on half of the desired domain $\Omega$. In those
cases we prescribe ``free-slip'' boundary conditions for the displacement field
$\u_h^n$ on the symmetry plane $\Gamma_{D_i}$, that is, we replace
\eqref{eq:ShD} with
\begin{equation} \label{eq:ShDi}
\vecS^h_D = \left\{ 
\veceta = (\eta_1,\ldots,\eta_d) \in \vecS^h 
: \veceta = {\bf 0} \text{ on } \Gamma_D 
\ \text{ and }\ 
\eta_i = 0 \text{ on } \Gamma_{D_i}, i = 1,\ldots,d
\right\}.
\end{equation}
All computations performed in this work are for spatial dimension $d = 2$. \an{For} the remainder of the paper we consider the quadratic interpolation function \eqref{eq:gphic} in the elasticity tensor $\C(\vp)$, forcing $\bm{f} = \0$, objective functional weightings $\beta = 1$ and $\widehat{\alpha} = 1$ unless further specified.

\subsection{Numerical simulations}
\subsubsection{Optimality condition without elasticity}
We first investigate the setting with $\g = \0$, so that \eqref{eq:FEAa} yields $\u_h^n = \0$ and \eqref{eq:FEAb} reduces to an Allen--Cahn variational inequality on $\V_m^h$. From the discussion in Section \ref{sec:aniso}, this is a phase field approximation of the volume preserving anisotropic mean curvature flow (due to the mass constraint), and it is expected that the long time behavior of solutions to display the Wulff shape \eqref{Wulff} corresponding to the anisotropy $\gamma$, see Figure~\ref{fig:frank3_01}. In Figure~\ref{fig:2dcircle_ani} we display snapshots of the solution at times $t = 0$, $0.001$, $0.005$ and $0.03$ with anisotropy \eqref{eq:HG3dnew} for parameter values $\alpha = 0.5$, $\delta = 0.1$ and $\eps = 1/(16\pi)$.  We notice from the bottom plot of the discrete Ginzburg--Landau energy $\mathcal{E}_\gamma^h(\varphi_h^n) = (\eps A(\nabla \varphi_h^n) + \tfrac{1}{\eps}\Psi(\varphi_h^n),1)^h$ that it is decreasing over time, and on the top right we observe the expected Wulff shape attained near equilibrium.
\begin{figure}[h]
\center
\includegraphics[angle=-0,width=0.2\textwidth]{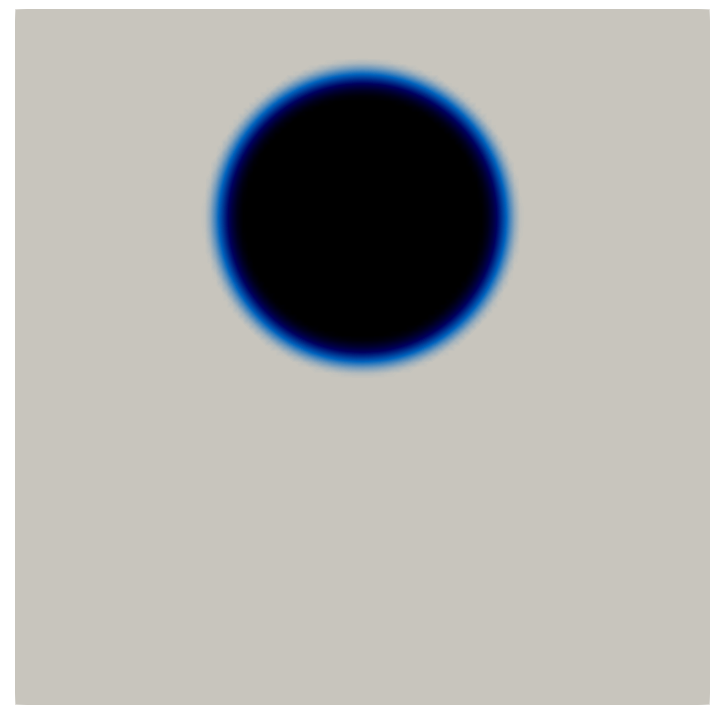}
\includegraphics[angle=-0,width=0.2\textwidth]{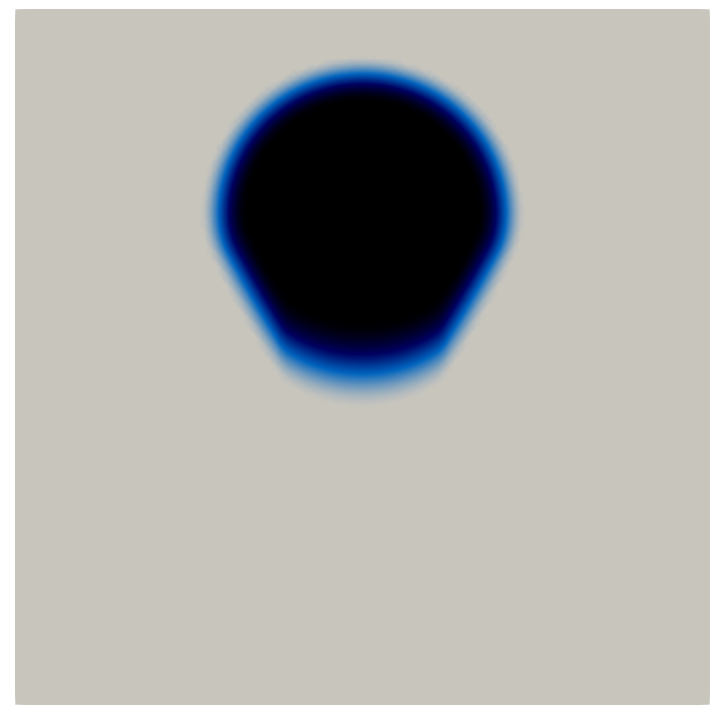}
\includegraphics[angle=-0,width=0.2\textwidth]{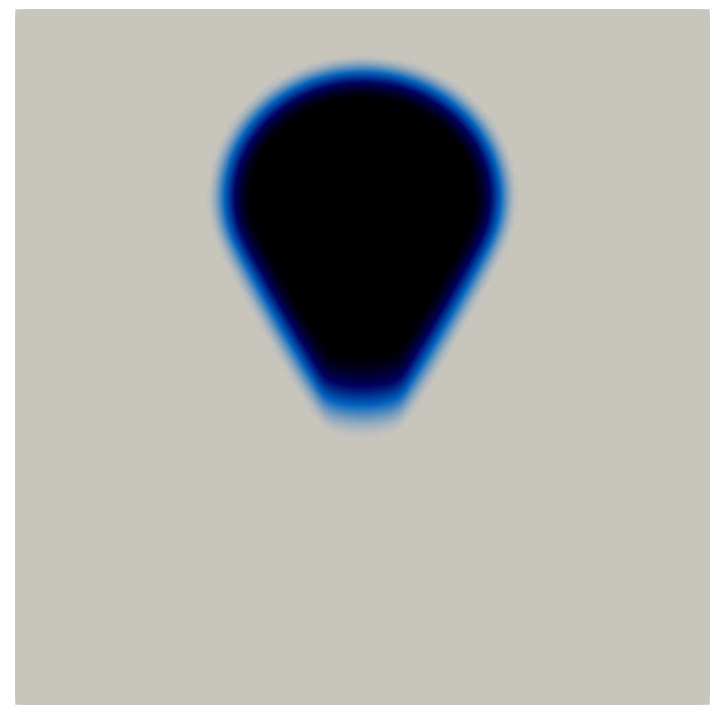}
\includegraphics[angle=-0,width=0.2\textwidth]{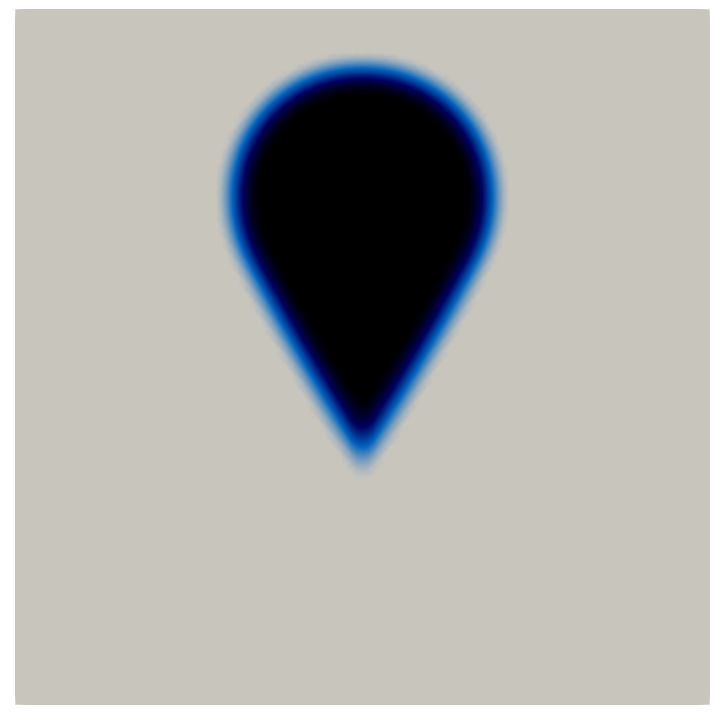}
\includegraphics[angle=-90,width=0.45\textwidth]{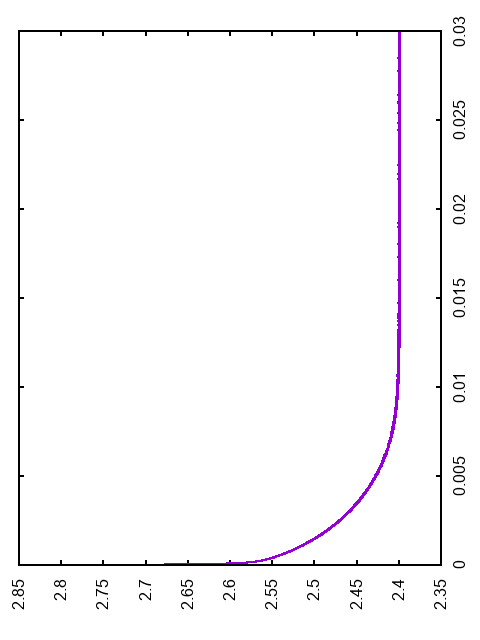}
\caption{(Top) Snapshots of the solution with anisotropy \eqref{eq:HG3dnew} for $\alpha = 0.5$, $\delta=0.1$ and $\eps = 1/(16\pi)$ at times $t = 0$, $0.001$, $0.005$, $0.03$. (Below) A plot of discrete Ginzburg--Landau energy $\mathcal{E}_\gamma^h(\varphi_h^n) = (\eps A(\nabla \varphi_h^n) + \tfrac{1}{\eps}\Psi(\varphi_h^n),1)^h$ over time.
}
\label{fig:2dcircle_ani}
\end{figure}

\subsubsection{Dripping effect of a straight interface}
Next, we investigate the role of the convexity/non-convexity of $\gamma$ in the generation of the dripping effect \cite{AllaireDEFM17,Amir,Qian}. We solve \eqref{eq:FEA} with $\g = \0$ and initial condition taken as some large perturbation of a straight interface. Taking \eqref{eq:HG3dnew} as the anisotropy with $\alpha = 0.5$, $\delta = 0.1$ and $\eps = 1/(32 \pi)$, in Figure~\ref{fig:2dwigglebig32pi_ani} we display the snapshots of the solution at times $t = 0$, $0.001$, $0.005$ and $0.02$, where it is clear that the oscillations are dampened over time.  Thus, with a convex anisotropy the dripping effect is suppressed. In contrast, taking $\gamma$ as the non-convex anisotropy \eqref{2D:gam:Ncon} with $\lambda = 0.5$ now yields Figure~\ref{fig:2dwigglebig32pi_newnc1_ani}, where the corresponding snapshots of the solution at times $t = 0$, $0.001$, $0.005$, $0.02$ are displayed. Here we clearly observe the \an{behavior} described in Example \ref{eg:Frank2}. 

\begin{figure}[h]
\centering
\includegraphics[angle=-0,width=0.2\textwidth]{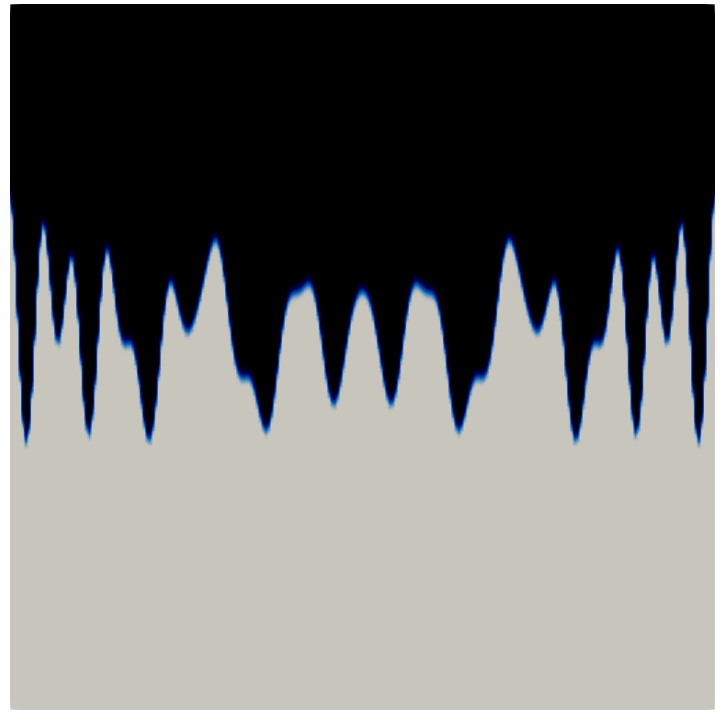}
\includegraphics[angle=-0,width=0.2\textwidth]{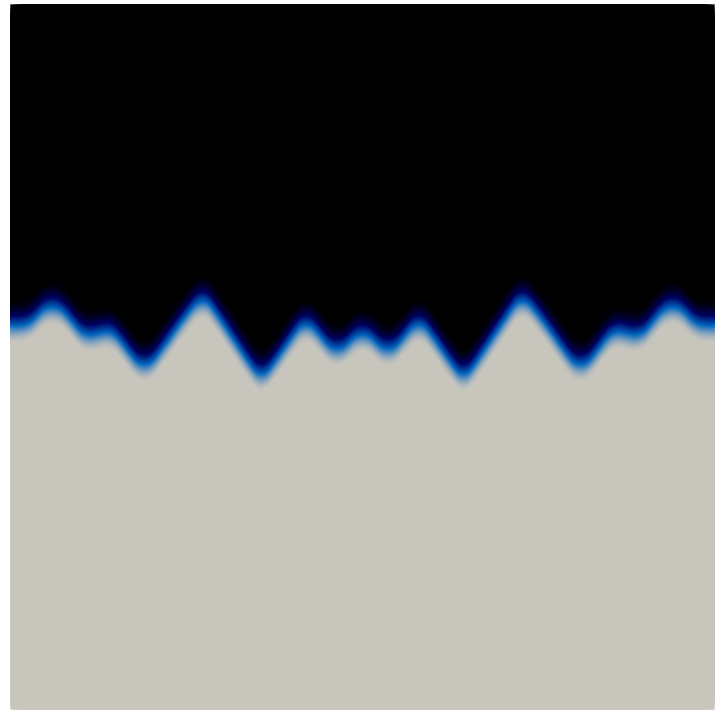}
\includegraphics[angle=-0,width=0.2\textwidth]{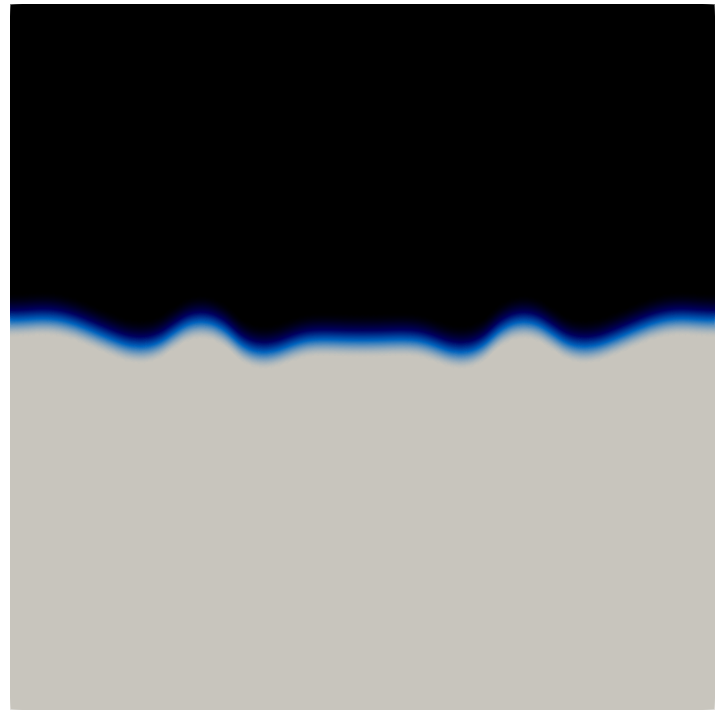}
\includegraphics[angle=-0,width=0.2\textwidth]{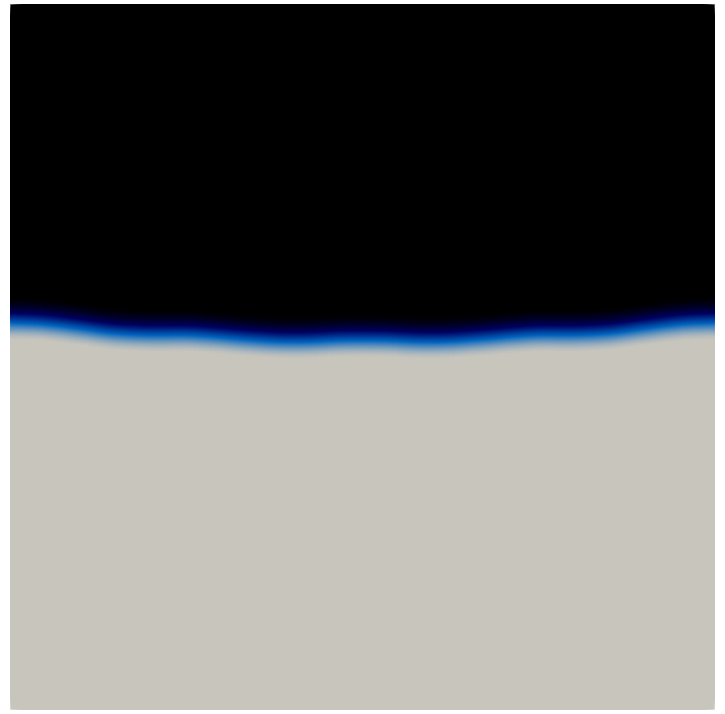}
\caption{Snapshots of the solution with convex anisotropy \eqref{eq:HG3dnew} for $\alpha = 0.5$, $\delta=0.1$ and $\eps = 1/(32\pi)$ at times $t = 0$, $0.001$, $0.005$, $0.02$.}
\label{fig:2dwigglebig32pi_ani}
\end{figure}
\begin{figure}[h]
\centering
\includegraphics[angle=-0,width=0.2\textwidth]{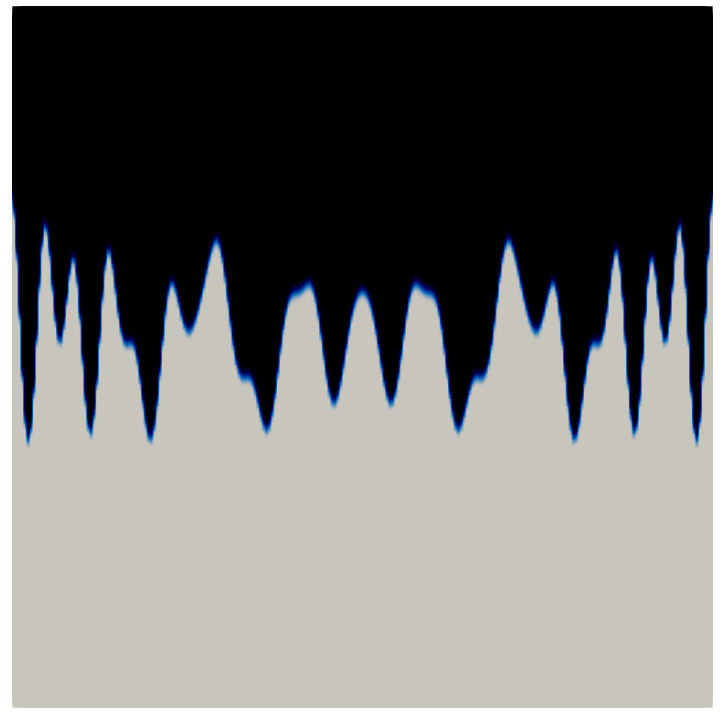}
\includegraphics[angle=-0,width=0.2\textwidth]{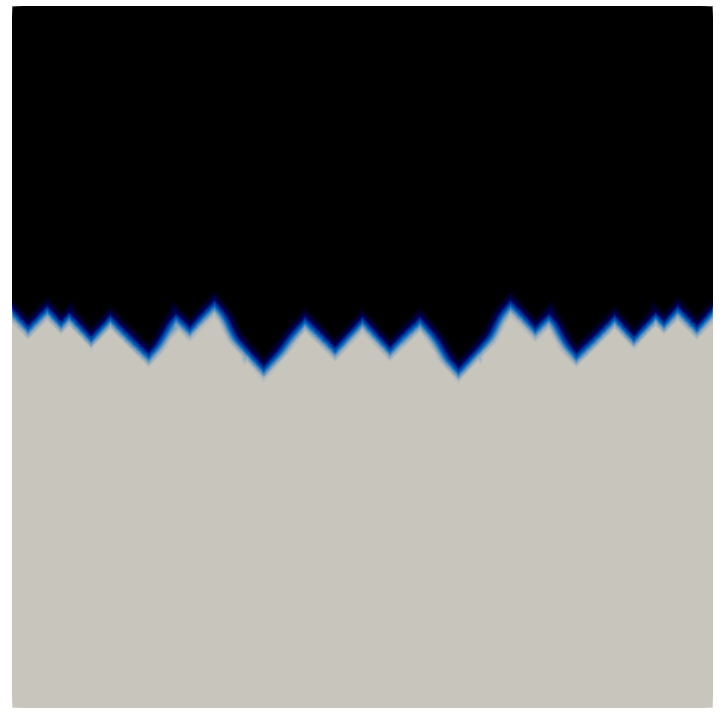}
\includegraphics[angle=-0,width=0.2\textwidth]{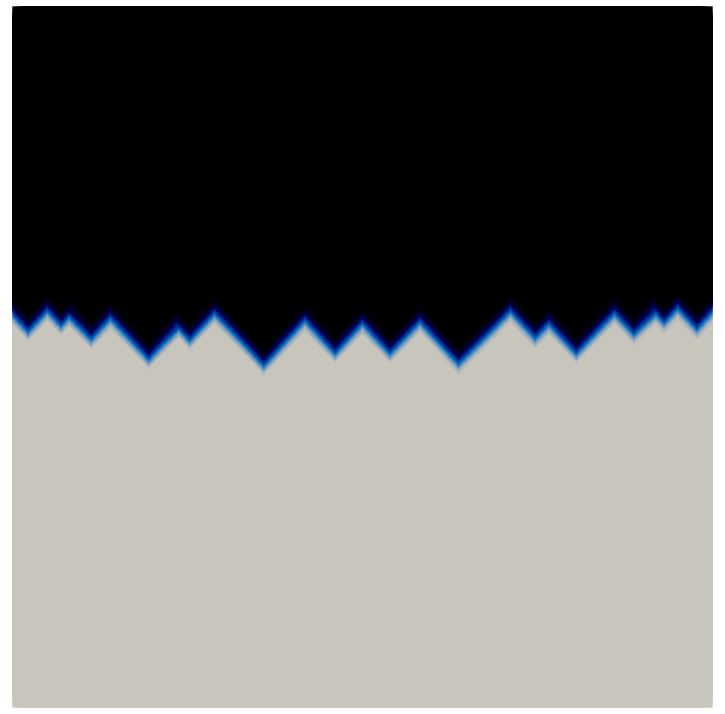}
\includegraphics[angle=-0,width=0.2\textwidth]{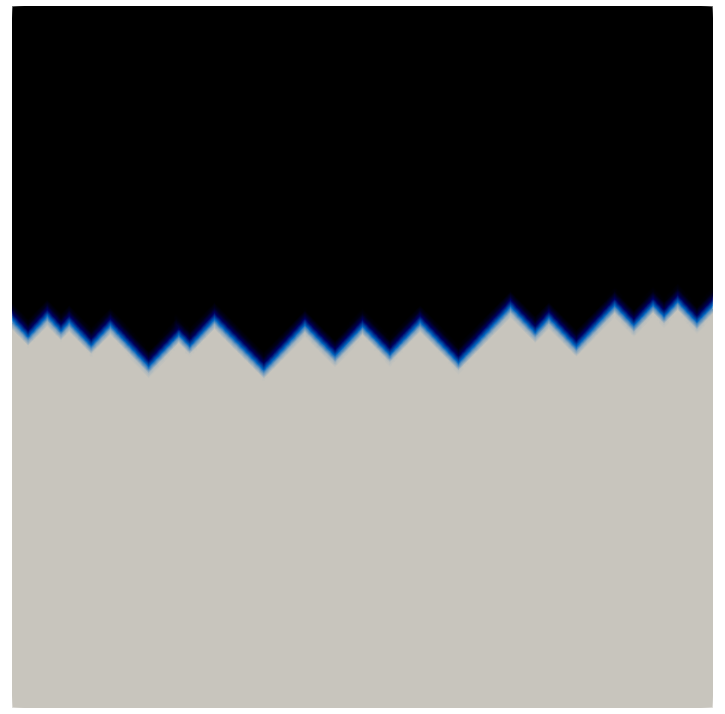}
\caption{Snapshots of the solution with non-convex anisotropy \eqref{2D:gam:Ncon} for $\alpha = 0.5$, $\delta=0.1$, $\lambda = 0.5$ and $\eps = 1/(32\pi)$ at times $t = 0$, $0.001$, $0.005$, $0.02$.
}
\label{fig:2dwigglebig32pi_newnc1_ani}
\end{figure}

\subsubsection{\an{Cantilever beam computation}}
Inspired by \cite[Figure~8]{AllaireDEFM17}, for the state system \eqref{state}, we consider the setting
\begin{equation} \label{eq:2dAllaire}
\begin{aligned}
\Omega & = (-\tfrac12,\tfrac12) \times (-\tfrac12,\tfrac12),\quad
\Gamma_D = (-\tfrac14,\tfrac14) \times \{-\tfrac12\},\\ 
\Gamma_g & = (-0.02, 0.02) \times \{\tfrac12\},\quad
\bm{g} = \begin{pmatrix} g \\ 0 \end{pmatrix},\end{aligned}
\end{equation}
with $\Gamma_0 = \pd \Omega \setminus (\Gamma_D \cup \Gamma_g)$ \an{and $g$ a positive constant}. Note that the domain $\Omega$ we use here is larger than the setting in \cite{AllaireDEFM17}. This is done to to avoid the influence of the domain boundary $\pd \Omega$ on the growth of the interfacial layer $\{|\varphi_h| < 1\}$. As in \cite{AllaireDEFM17} we consider an elastic material with a normalized Young's modulus $E = 1$ and a Poisson ratio $\nu = 0.33$.  The corresponding Lam\'e constants in the elasticity tensor \eqref{Lame} can be calculated through the well-known relations
\begin{equation} \label{eq:YoungPoisson}
\mu = \frac{E}{2(1+\nu)}
\quad\text{and}\quad
\lambda = \frac{E\nu}{(1+\nu)(1-2\nu)}.
\end{equation}
For $g = 5$ and $\widehat \alpha = 0.5$, we solve the full optimality condition \eqref{eq:FEA} with the convex regularized anisotropy \eqref{eq:HG3dnew} and compare the steady states for $\delta = 0.1$ and a range of $\alpha \an{=1, 0.7, 0.5, 0.2}$. Note that smaller values of $\alpha$ indicate a stronger anisotropy.  The results are displayed in Figure~\ref{fig:2dAFig8_32pi_ani_many}, where we taken random initial data $\varphi_h^0$ with mass $m = \frac{1}{|\Omega|} \int_\Omega \varphi_h^0 \dx = 0.7$. Note that $\alpha = 1$ corresponds to the isotropic case, see Remark \ref{rem:iso}, and as $\alpha$ decreases, we observe the interior connecting bridges in the cantilever beam become steeper in slope, up until $\alpha = 0.2$ where a design without interior structure is favored.
\begin{figure}[h]
\center
\includegraphics[angle=-0,width=0.2\textwidth]{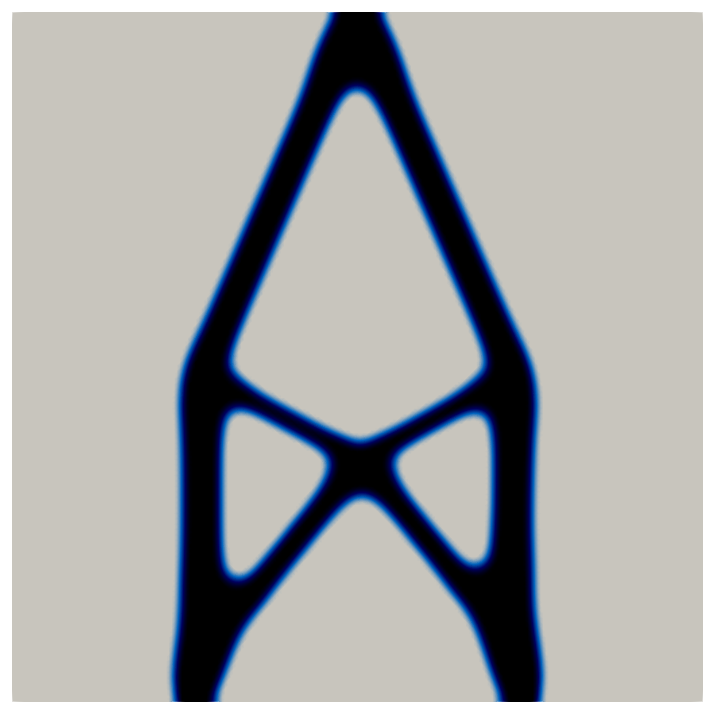}
\includegraphics[angle=-0,width=0.2\textwidth]{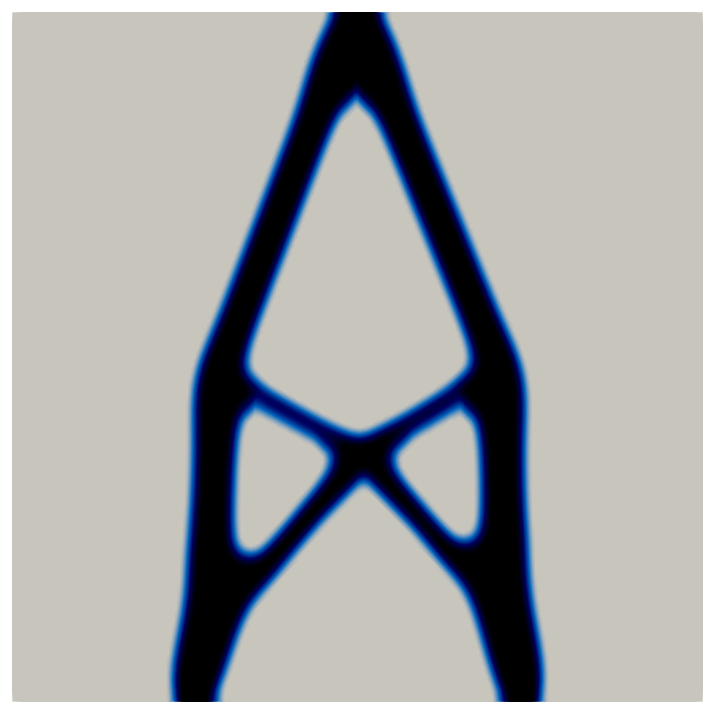}
\includegraphics[angle=-0,width=0.2\textwidth]{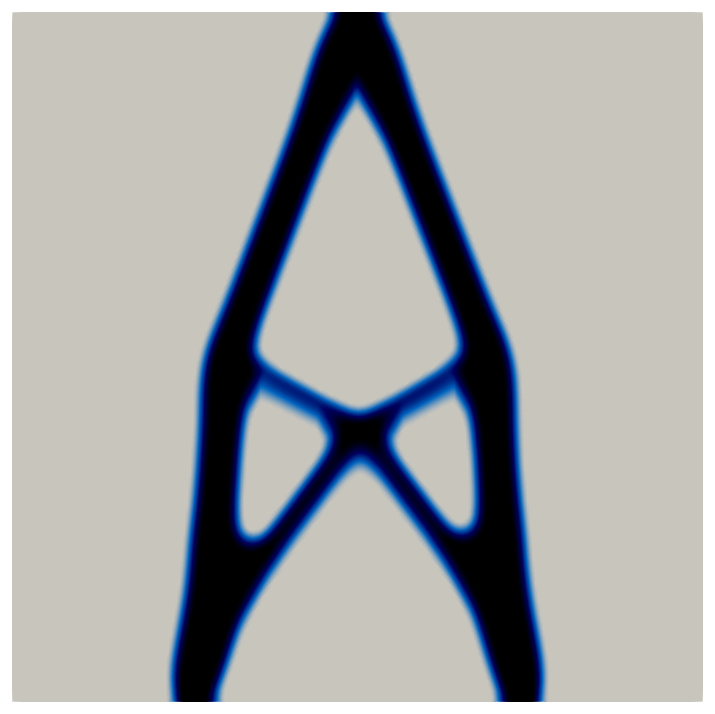}
\includegraphics[angle=-0,width=0.2\textwidth]{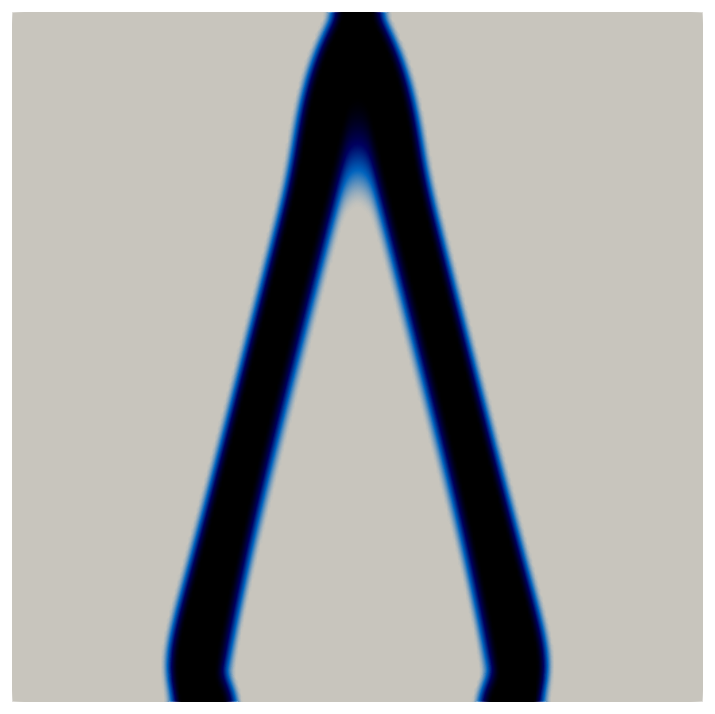}
\caption{Cantilever beam computation for $\eps=1/(32\pi)$ and $\bm{g} = (5,0)^\top$ on $[-0.02,0.02] \times \{0.5\}$. The solutions at time $t=0.04$, starting from random initial data with mass $m = 0.7$.  Anisotropy \eqref{eq:HG3dnew} used with $\delta=0.1$. From left to right: $\alpha = 1$ (isotropic case), $0.7$, $0.5$, $0.2$.
}
\label{fig:2dAFig8_32pi_ani_many}
\end{figure}

\begin{figure}[h]
\center
\includegraphics[angle=-0,width=0.2\textwidth]{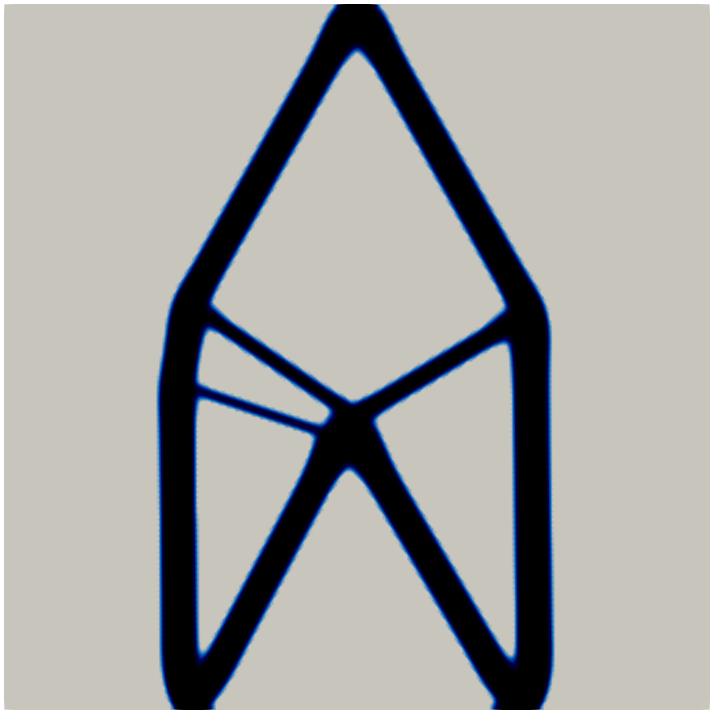}
\includegraphics[angle=-0,width=0.2\textwidth]{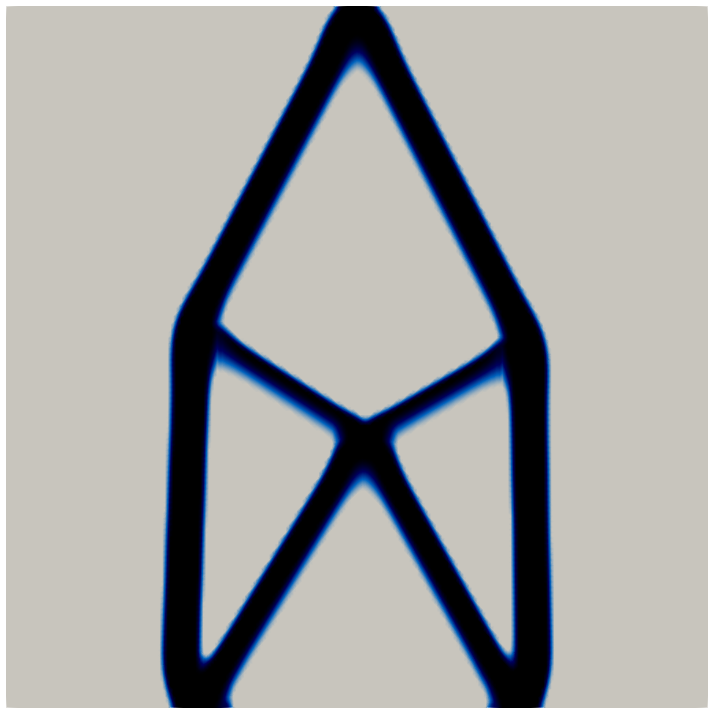}
\includegraphics[angle=-0,width=0.2\textwidth]{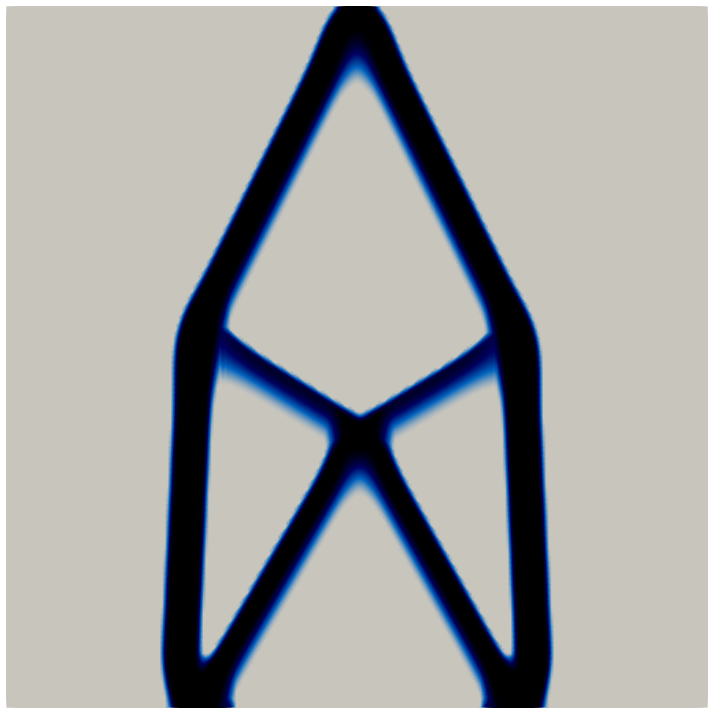}
\includegraphics[angle=-0,width=0.2\textwidth]{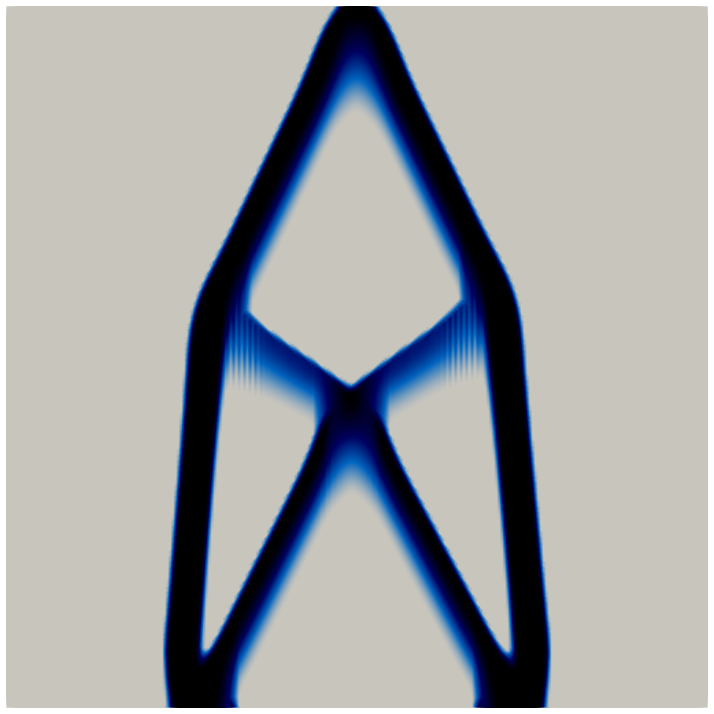}\\
\includegraphics[angle=-0,width=0.2\textwidth]{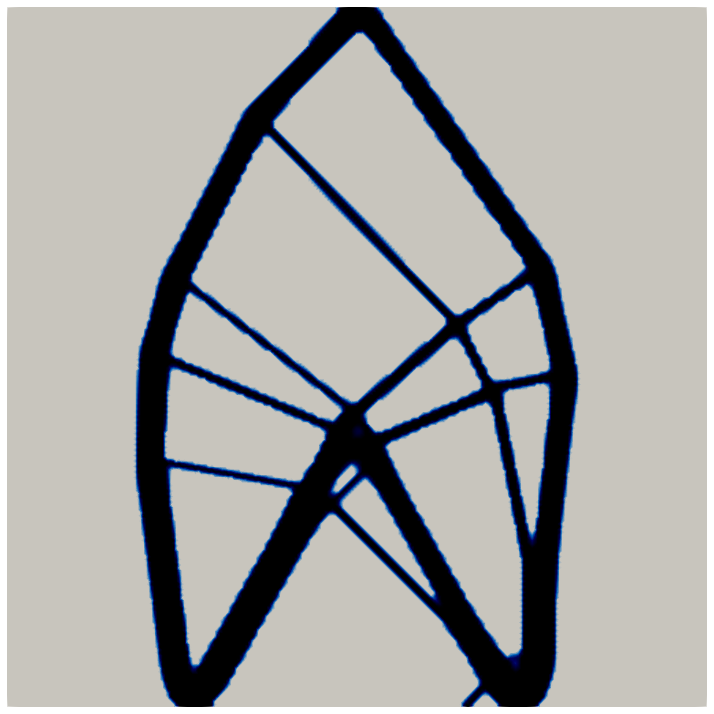}
\includegraphics[angle=-0,width=0.2\textwidth]{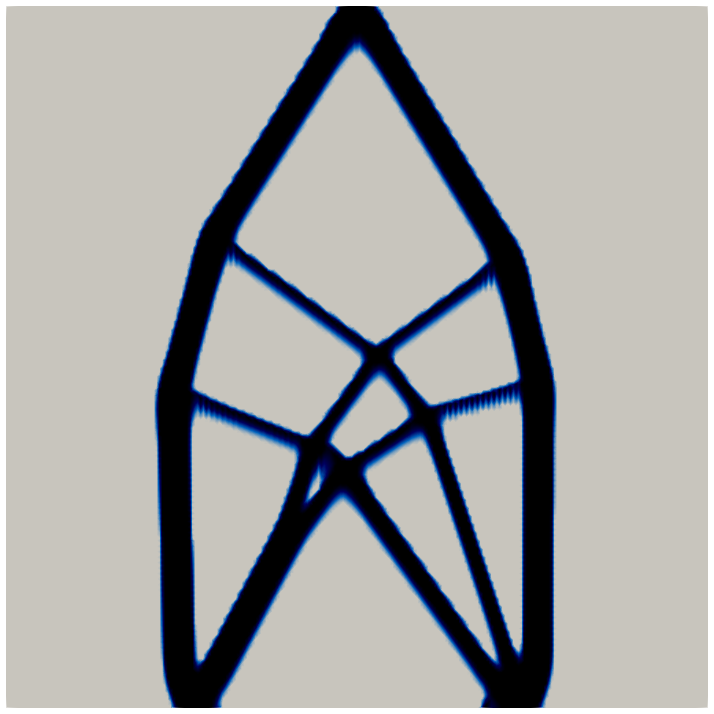}
\includegraphics[angle=-0,width=0.2\textwidth]{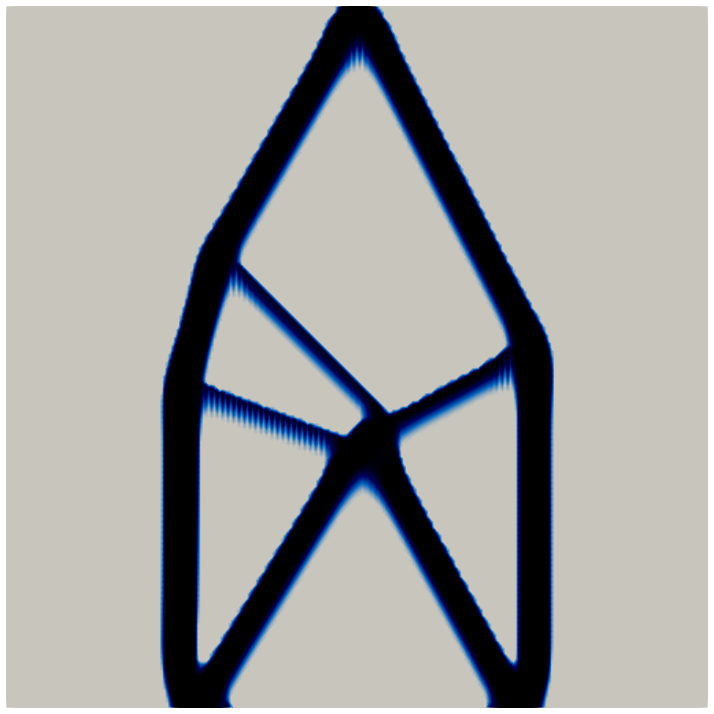}
\includegraphics[angle=-0,width=0.2\textwidth]{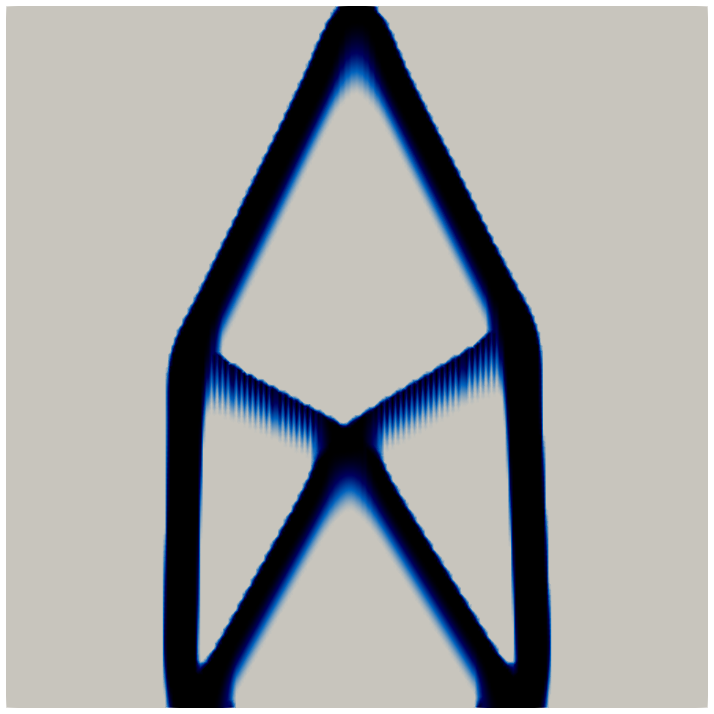}
\caption{Cantilever beam computation for $\eps=1/(32\pi)$ and $\bm{g} = (g,0)^\top$ on $[-0.02,0.02] \times \{0.5\}$. The solutions at time $t=0.04$, starting from random initial data with mass $m = 0.7$. Anisotropy \eqref{eq:HG3dnew} used with $\delta=0.1$.
(Top) $g = 30$ with $\alpha = 1$, $0.3$, $0.2$, $0.1$. (Bottom) $g = 50$ with $\alpha = 1$, $0.3$, $0.2$, $0.1$.  Left-most column corresponds to the isotropic case.
}
\label{fig:2dAFig8_32pi_ani_forces2}
\end{figure}

\begin{figure}[h]
\center
\includegraphics[angle=-0,width=0.2\textwidth]{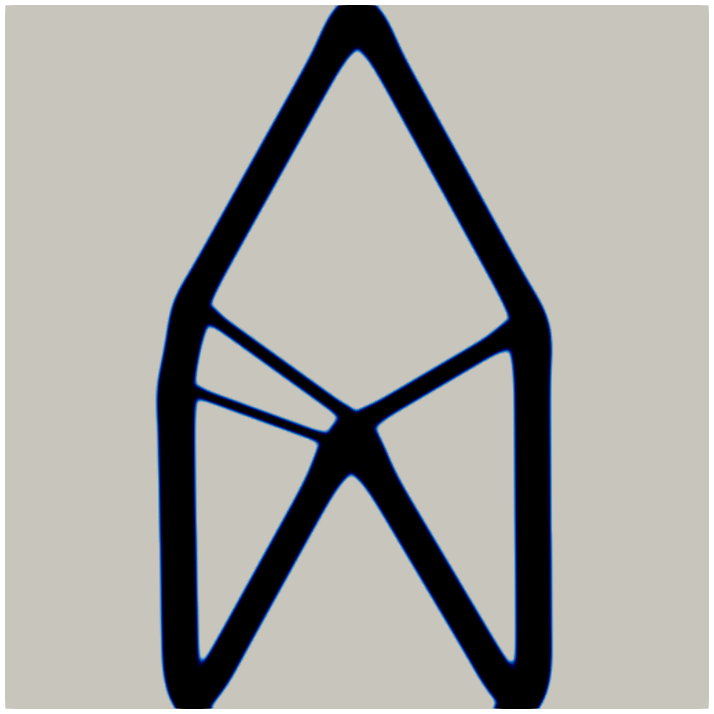}
\includegraphics[angle=-0,width=0.2\textwidth]{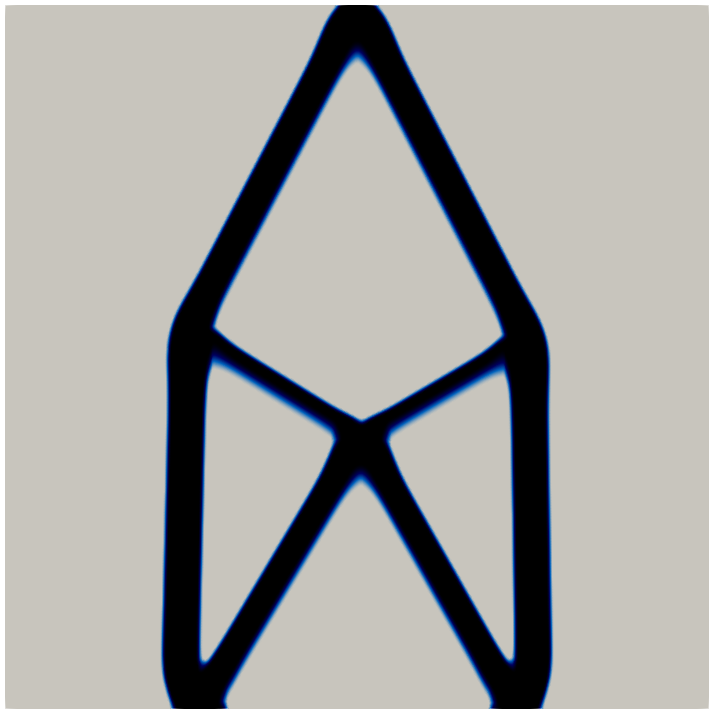}
\includegraphics[angle=-0,width=0.2\textwidth]{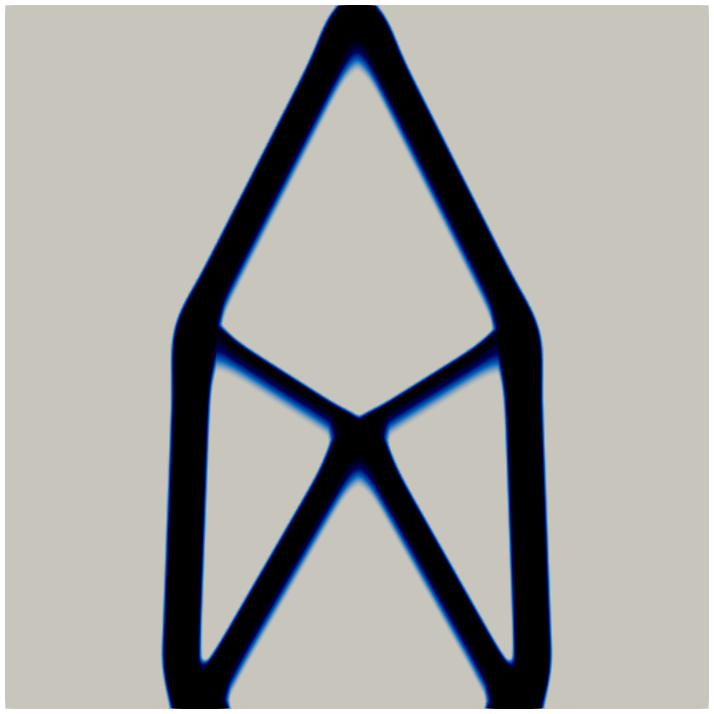}
\includegraphics[angle=-0,width=0.2\textwidth]{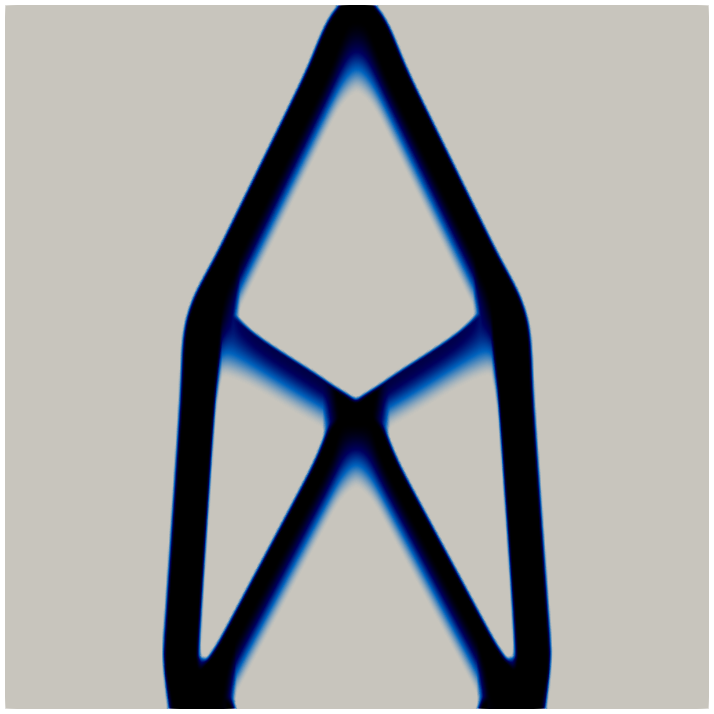}
\includegraphics[angle=-0,width=0.2\textwidth]{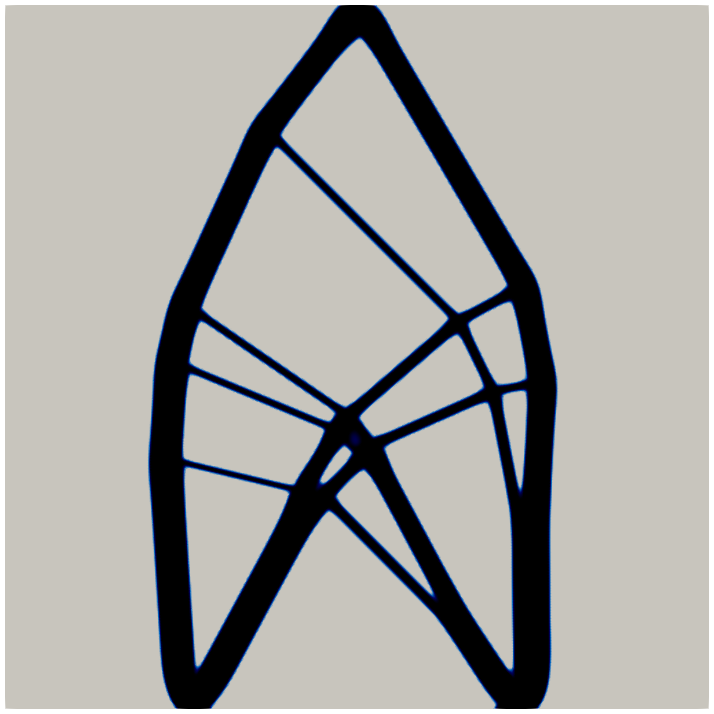}
\includegraphics[angle=-0,width=0.2\textwidth]{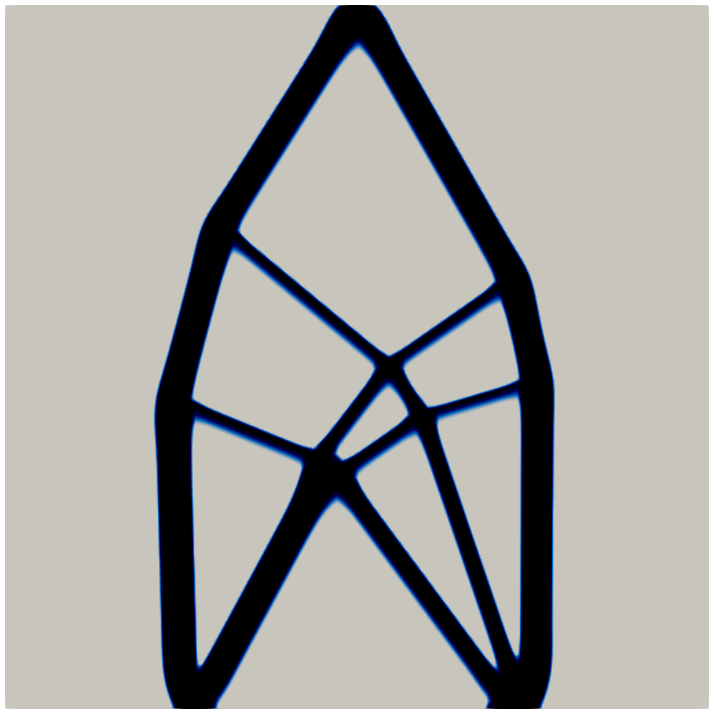}
\includegraphics[angle=-0,width=0.2\textwidth]{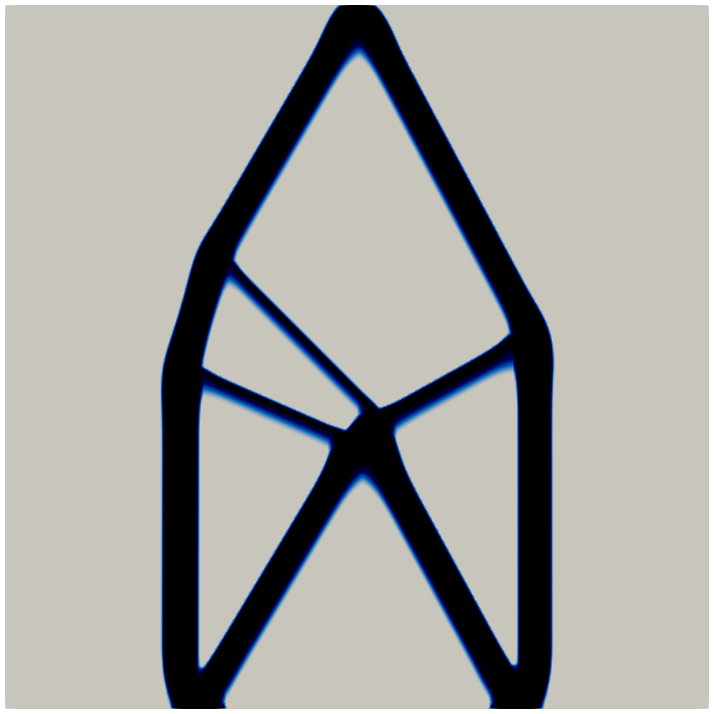}
\includegraphics[angle=-0,width=0.2\textwidth]{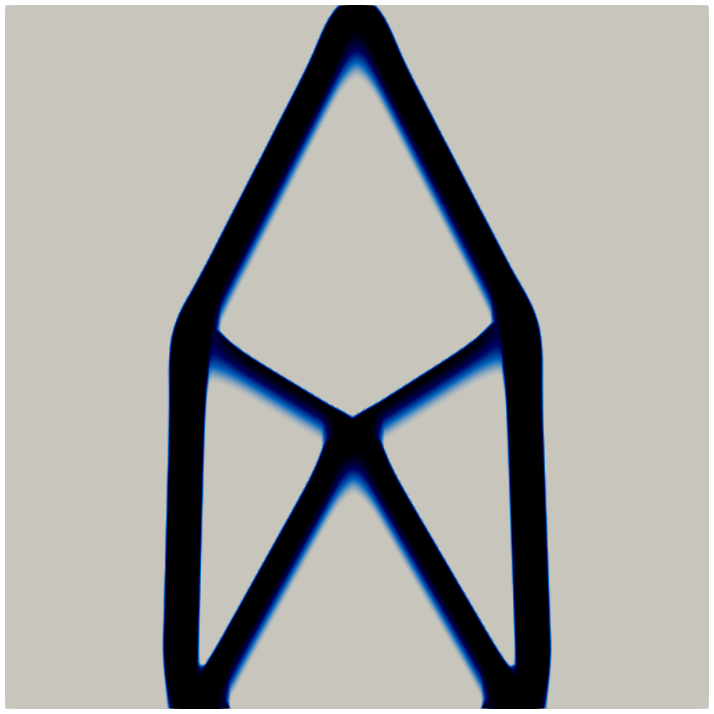}
\caption{Cantilever beam computation for $\eps=1/(64\pi)$ and with $\bm{g} = (g,0)^\top$ 
on $[-0.02,0.02] \times \{0.5\}$. The solutions at time $t=0.01$, starting from the final states displayed in
Figure~\ref{fig:2dAFig8_32pi_ani_forces2}.
Anisotropy \eqref{eq:HG3dnew} used with $\delta=0.1$. (Top) $g = 30$ with \an{$\alpha = 1$, $0.3$, $0.2$, $0.1$.} (Bottom) $g = 50$, \an{$\alpha = 1$, $0.3$, $0.2$, $0.1$.} Left-most column corresponds to the isotropic case.
}
\label{fig:2dAFig8_64pi_ani_forces2}
\end{figure}

In Figure~\ref{fig:2dAFig8_32pi_ani_forces2} we provide a more detailed comparison of the cantilever beam designs for the isotropic case $\alpha = 1$ and the strongly anisotropic cases $\alpha \an{=0.3, 0.2, 0.1}$ with loading magnitudes $g = 50$ and $g = 30$. In both situations, the anisotropic designs exhibit less interior structures and connecting bridges with steeper slopes than the isotropic designs, which is to be expected from the construction of $\gamma_{\alpha}$. It is worth mentioning that \an{some} (but not all) of the interfacial regions (\an{colored} deep blue) are thicker for smaller values of $\alpha$.  This is attributed to the fact that, from the expression of $\Phi_0$ obtained in the formally matched asymptotic analysis in Section \ref{sec:formal}, the interfacial thickness at a spatial point $\bm{x} = (\bm{s}, z)$ in the transformed coordinate system is proportional to $\gamma(\bm{\nu}(\bm{s}))$ with unit normal $\bm{\nu}$. Recalling the left of Figure~\ref{fig:Frankeg}, the normal vectors with directions associated to the line segment $L$ attain higher values of $\gamma$, and thus a thicker interfacial region, compared to directions associated with the circular arc $C$. Smaller values of $\alpha$ amplify the thickness as the line segment $L$ is closer to the origin, leading to higher values of $\gamma$.

In Figure~\ref{fig:2dAFig8_64pi_ani_forces2} we display refined designs taking as initial conditions the final states in Figure~\ref{fig:2dAFig8_32pi_ani_forces2} as well as a smaller value of $\eps =1/(64\pi)$. The overall designs are similar to Figure~\ref{fig:2dAFig8_32pi_ani_forces2} with some minor changes, particularly for the isotropic case $\alpha = 1$ with $g = 50$. We also note that the thicknesses of the interfacial layers in Figure~\ref{fig:2dAFig8_64pi_ani_forces2} are smaller than those in Figure~\ref{fig:2dAFig8_32pi_ani_forces2}, particularly for $\alpha = 0.1$, which is due to the smaller values of $\eps$ used.

\subsubsection{\an{Bridge construction}}

Our last numerical simulation is inspired by \cite[Figure~4]{WangZ04}, where we use the half-domain setup
\begin{align*} 
\Omega & = (0,1) \times (-\tfrac12,\tfrac12),\quad
\Gamma_{D_1} = \{0\} \times (-\tfrac12,\tfrac12),\quad
\Gamma_{D} = (\tfrac78,1) \times \{-\tfrac12\},\nonumber\\
\Gamma_g & = ((0, 0.02) \cup (\tfrac12-0.02,\tfrac12+0.02))\times\{-\tfrac12\},
\quad
\bm{g}(x_1,x_2) = \begin{cases}
-\tbinom0{3000} & x_1 \leq 0.02\,, \\
-\tbinom0{1500} & x_1 > 0.02\,.
\end{cases}
\end{align*}
For an elastic material with $E = 1200$ and $\nu=0.3$, we investigate the optimal designs with the convex regularized anisotropy \eqref{eq:HG3dnew}. In Figure~\ref{fig:2dbridge2q_32pi_anis} we display results for $\alpha = 1$, $0.7$, $0.5$ starting from random initial data with mass $m = 0.04$. Although the overall shapes are rather similar in all three cases, we observe the anisotropic cases (middle and right figures) have sharper corners on the underside of the bridge, whereas the underside of the isotropic case (left figure) is smoother.
\begin{figure}
\center
\includegraphics[angle=-0,width=0.3\textwidth]{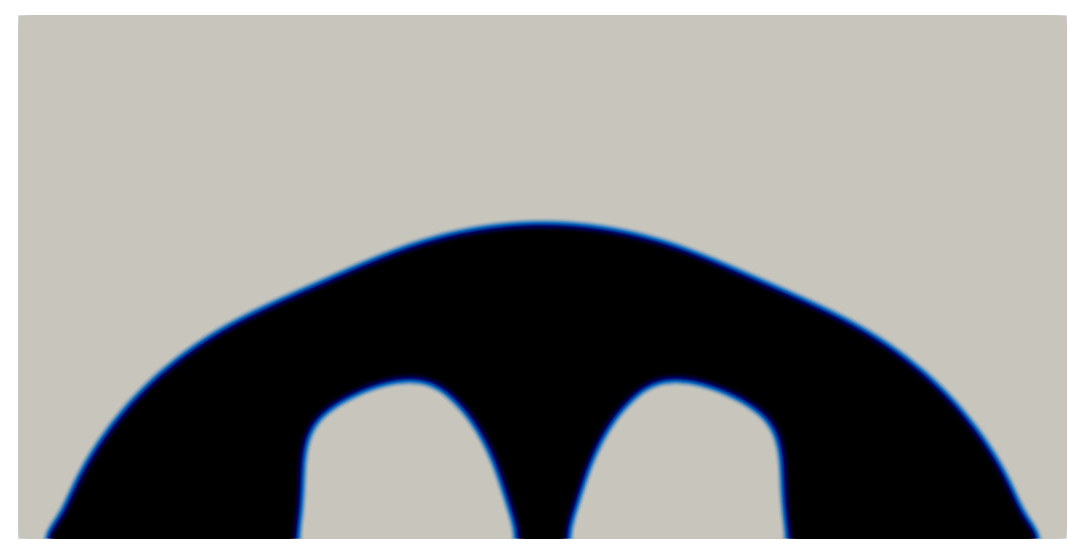}
\includegraphics[angle=-0,width=0.3\textwidth]{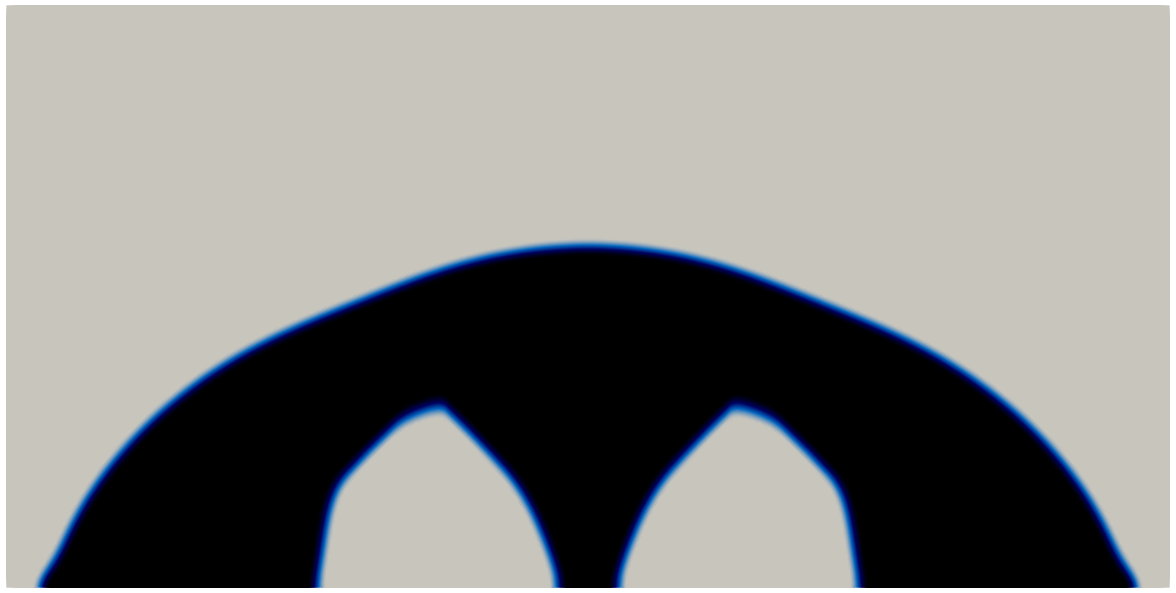}
\includegraphics[angle=-0,width=0.3\textwidth]{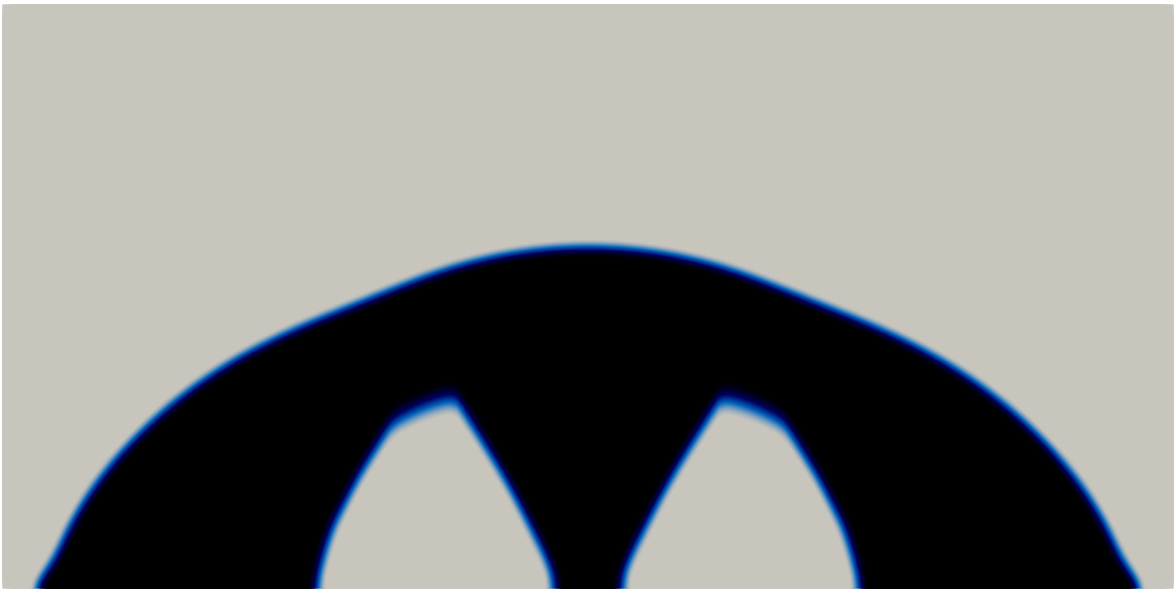}
\caption{Bridge construction half-domain computation for $\eps = 1/(32\pi)$. Anisotropy \eqref{eq:HG3dnew} used with $\delta=0.1$ and
$\alpha = 1$ (isotropic case), $0.7$, $0.5$.
The solutions at time $t=0.1$ starting from random initial data with mass $m = 0.4$.
}
\label{fig:2dbridge2q_32pi_anis}
\end{figure}

\section*{Acknowledgments}
\noindent The work of KFL is supported by the Research Grants Council of the Hong Kong Special Administrative Region, China [Project No.: HKBU 14302218].

\footnotesize
\bibliographystyle{plain}

\end{document}